\DeclareMathAlphabet{\mathcalligra}{T1}{calligra}{c}{h}
\providecommand{\U}[1]{\protect\rule{.1in}{.1in}}
\newtheorem{theorem}{Theorem}[section]
\newtheorem{proposition}[theorem]{Proposition}
\newtheorem{lemma}[theorem]{Lemma}
\newtheorem{corollary}[theorem]{Corollary}
\let\oldremark\remark
\renewcommand{\remark}{\oldremark\normalfont}
\newtheorem{example}[theorem]{Example}
\let\oldexample\example
\renewcommand{\example}{\oldexample\normalfont}
\newtheorem{examples}[theorem]{Examples}
\let\oldexamples\examples
\renewcommand{\examples}{\oldexamples\normalfont}
\def\<{{\langle}}
\def\>{{\rangle}}
\def\bea{\begin{eqnarray*} }
\def\eea{\end{eqnarray*} }
\def\be{\begin{equation} }
\def\ee{\end{equation} }
\def\qed{\ifhmode\unskip\nobreak\fi\ifmmode\ifinner
\else\hskip5 pt \fi\fi\hbox{\hskip5 pt \vrule width4 pt
height6 pt  depth1.5 pt \hskip 1pt }}
\DeclareMathOperator*{\diver}{div}
\DeclareMathOperator*{\vol}{vol}
\DeclareMathOperator*{\supp}{supp}
\DeclareMathOperator*{\grad}{grad}
\DeclareMathOperator*{\ess}{ess}
\DeclareMathOperator*{\Lip}{Lip}
\DeclareMathOperator*{\av}{av}
\DeclareMathOperator*{\tr}{tr}
\DeclareMathOperator*{\ad}{ad}
\begin{document}
	
\title{Spectral estimates for Riemannian submersions with fibers of basic mean curvature}
\author{Panagiotis Polymerakis}
\date{}

\maketitle

\renewcommand{\thefootnote}{\fnsymbol{footnote}}
\footnotetext{\emph{Date:} \today} 
\renewcommand{\thefootnote}{\arabic{footnote}}

\renewcommand{\thefootnote}{\fnsymbol{footnote}}
\footnotetext{\emph{2010 Mathematics Subject Classification.} 58J50, 35P15, 53C99.}
\renewcommand{\thefootnote}{\arabic{footnote}}

\renewcommand{\thefootnote}{\fnsymbol{footnote}}
\footnotetext{\emph{Key words and phrases.} Riemannian submersion, basic mean curvature, Riemannian principal bundle, amenable Lie group, bottom of spectrum, discrete spectrum.}
\renewcommand{\thefootnote}{\arabic{footnote}}

\begin{abstract}
For Riemannian submersions with fibers of basic mean curvature, we compare the spectrum of the total space with the spectrum of a Schr\"{o}dinger operator on the base manifold. 
Exploiting this concept, we study submersions arising from actions of Lie groups. In this context, we extend the state of the art results on the bottom of the spectrum under Riemannian coverings. As an application, we compute the bottom of the spectrum and the Cheeger constant of connected, amenable Lie groups.
\end{abstract}

\section{Introduction}

The study of the spectrum of the Laplacian on a Riemannian manifold has attracted much attention over the last years. In order to comprehend its relations with the geometry of the underlying manifold, it is reasonable to investigate its behavior under maps between Riemannian manifolds that respect the geometry of the manifolds to some extent. In this paper, we study the behavior of the spectrum under Riemannian submersions.

The notion of Riemannian submersion was introduced in the 1960s as a tool to express the geometry of a manifold in terms of the geometry of simpler components, namely, the base space and the fibers. Of course, by geometry of the fibers we mean both their intrinsic and their extrinsic geometry as submanifolds of the total space. Bearing this in mind, it is natural to describe the spectrum of the total space in terms of the geometry and the spectrum of the base space and the fibers.

To set the stage, let $p \colon M_{2} \to M_{1}$ be a Riemannian submersion and denote by $F_{x} := p^{-1}(x)$ the fiber over $x \in M_{1}$. The spectrum of (the Laplacian on) $M_{2}$ has been studied in the case where $M_{2}$ is closed (that is, compact and without boundary) and the submersion has totally geodesic, or minimal fibers, or fibers of basic mean curvature (cf. for example the survey \cite{MR2963622}). However, the situation is quite more complicated and yet unclear if $M_{2}$ is not closed.

Recently, in \cite{Mine4}, extending the result of \cite{MR3787357}, we established a lower bound for the bottom of the spectrum $\lambda_{0}(M_{2})$ of $M_{2}$, if the mean curvature of the fibers is bounded in a certain way. More precisely, according to \cite[Theorem 1.1]{Mine4}, if the (unnormalized) mean curvature of the fibers is bounded by $\| H \| \leq C \leq 2 \sqrt{\lambda_{0}(M_{1})}$, then the bottom of the spectrum of $M_{2}$ satisfies
\[
\lambda_{0}(M_{2}) \geq (\sqrt{\lambda_{0}(M_{1})} - C/2)^{2} + \inf_{x \in M_{1}} \lambda_{0}(F_{x}).
\]
Moreover, if the equality holds and $\lambda_{0}(M_{1}) \notin \sigma_{\ess}(M_{1})$ (that is, $\lambda_{0}(M_{1})$ is an isolated point of the spectrum of the Laplacian on $M_{1}$), then $\lambda_{0}(F_{x})$ is equal to its infimum for almost any $x \in M_{1}$. Recall that, in general, $\lambda_{0}(F_{x})$ is only upper semi-continuous with respect to $x \in M_{1}$ (cf. \cite[Lemma 2.9]{Mine4}).

In the second part of \cite{Mine4}, following \cite{Bessa}, we studied Riemannian submersions with closed fibers. In this context, we introduced a Schr\"{o}dinger operator on $M_{1}$, whose potential is determined by the volume of the fibers, and compared its spectrum with the spectrum of $M_{2}$. It should be noticed that if the submersion has fibers of infinite volume, then we are not able to define that operator, at least in the way we did in \cite{Mine4}.

In this paper, motivated by the aforementioned results, we introduce a Schr\"{o}dinger operator on the base space of a Riemannian submersion with fibers of basic mean curvature and compare its spectrum with the spectrum of the total space. To be more specific, let $p \colon M_{2} \to M_{1}$ be a Riemannian submersion with fibers of basic mean curvature, and consider the Schr\"{o}dinger operator
\begin{equation}\label{operator}
S = \Delta + \frac{1}{4} \| p_{*} H \|^{2} - \frac{1}{2} \diver p_{*}H
\end{equation}
on $M_{1}$. It is worth to point out that $S$ is non-negative definite, that is, $\lambda_{0}(S) \geq 0$. Furthermore, it is evident that $S$ coincides with the Laplacian, if the submersion has minimal fibers. Our first result relates the bottom of the spectrum of this operator with the bottom of the spectrum of $M_{2}$.

\begin{theorem}\label{basic mean curv thm}
Let $p \colon M_{2} \to M_{1}$ be a Riemannian submersion with fibers of basic mean curvature, and consider the Schr\"{o}dinger operator $S$ as above. Then
\[
\lambda_{0}(M_{2}) \geq \lambda_{0}(S) + \inf_{x \in M_{1}} \lambda_{0}(F_{x}).
\]
If, in addition, the equality holds and $\lambda_{0}(S) \notin \sigma_{\ess}(S)$, then $\lambda_{0}(F_{x})$ is almost everywhere equal to its infimum.
\end{theorem}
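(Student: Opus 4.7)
The plan is to bound the Rayleigh quotient of an arbitrary test function $u\in C_c^\infty(M_2)$ by splitting $\|\nabla u\|^2=\|\nabla^h u\|^2+\|\nabla^v u\|^2$ into horizontal and vertical components. The vertical piece is immediate: since each restriction $u|_{F_x}$ has compact support in $F_x$, the variational characterization of $\lambda_0(F_x)$ yields $\int_{F_x}\|\nabla^v u\|^2 \geq \lambda_0(F_x)\int_{F_x}u^2$ fiberwise, and the coarea formula integrates this to
\[
\int_{M_2}\|\nabla^v u\|^2 \;\geq\; \Big(\inf_{x\in M_1}\lambda_0(F_x)\Big)\int_{M_2}u^2.
\]
The horizontal piece is the heart of the argument and must produce the $\lambda_0(S)$ term.

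Set $\tilde u(x):=\big(\int_{F_x}u^2\,d\mu_{F_x}\big)^{1/2}$, a nonnegative function on $M_1$ with compact support. For $X\in T_xM_1$ with horizontal lift $\tilde X$, the first variation formula for the fiber integral under the flow of $\tilde X$, combined with the basicity of the mean curvature $H$, gives
\[
X(\tilde u^2)+\langle X,p_*H\rangle\,\tilde u^2 \;=\; 2\int_{F_x}u\,\tilde X u\,d\mu_{F_x}.
\]
Applying Cauchy--Schwarz on the right, squaring, summing over an orthonormal basis of $T_xM_1$, and integrating over $M_1$, we obtain
\[
\int_{M_1}\|\nabla\tilde u\|^2+\int_{M_1}\tilde u\,\langle\nabla\tilde u,p_*H\rangle+\tfrac14\int_{M_1}\|p_*H\|^2\,\tilde u^2 \;\leq\; \int_{M_2}\|\nabla^h u\|^2.
\]
Integration by parts, legitimate because $\tilde u$ is compactly supported, rewrites the middle term as $-\tfrac12\int_{M_1}\tilde u^2\,\diver p_*H$, making the left-hand side exactly the quadratic form $(S\tilde u,\tilde u)$. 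Since $\int_{M_1}\tilde u^2=\int_{M_2}u^2$ by coarea, I conclude $\int_{M_2}\|\nabla^h u\|^2 \geq \lambda_0(S)\int_{M_2}u^2$. Adding the vertical bound and minimizing over $u$ yields the inequality.

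For the rigidity statement, assume equality holds and $\lambda_0(S)\notin\sigma_{\ess}(S)$, so $\lambda_0(S)$ is an eigenvalue admitting a strictly positive $L^2$ ground state $\phi$. Pick a minimizing sequence $u_n\in C_c^\infty(M_2)$ with $\int u_n^2=1$. The combined estimate must saturate in the limit, forcing both $(S\tilde u_n,\tilde u_n)\to\lambda_0(S)$ and $\int_{M_1}\bigl(\lambda_0(F_x)-\inf_y\lambda_0(F_y)\bigr)\tilde u_n^2\to 0$. The first, together with the spectral gap of $S$ at $\lambda_0(S)$, forces $\tilde u_n\to\phi$ in $L^2(M_1)$ by the standard projection argument (bound $(S\tilde u_n,\tilde u_n)$ below by $\lambda_0(S)\|P\tilde u_n\|^2+\lambda_1\|(I-P)\tilde u_n\|^2$). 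If $\lambda_0(F_x)>\inf_y\lambda_0(F_y)+\varepsilon$ on a positive-measure set $A$, the second quantity would be at least $\varepsilon\int_A\tilde u_n^2\to\varepsilon\int_A\phi^2>0$, a contradiction. Hence $\lambda_0(F_x)$ equals its infimum almost everywhere.

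The main technical obstacle I anticipate is the regularity of $\tilde u$: although $\tilde u^2$ is smooth, the square root is only locally Lipschitz, so to use $\tilde u$ as a test function in the quadratic form of $S$ I would regularize by $\sqrt{\tilde u^2+\varepsilon^2}$ and pass to the limit. A secondary point is verifying the first variation formula for fiber integrals in the non-compact-fiber setting; this is manageable because $u$ being compactly supported in $M_2$ renders $u|_{F_x}$ compactly supported on each fiber.
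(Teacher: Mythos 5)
Your proposal is correct and follows essentially the same route as the paper: the function $\tilde u$ is exactly the paper's ``pushdown'' $h(x)=\bigl(\int_{F_x}u^2\bigr)^{1/2}$, and your horizontal estimate (fiber-average derivative identity using basicity of $H$, Cauchy--Schwarz, then the divergence formula to produce the potential of $S$) reproduces the paper's Proposition \ref{rayleigh pushdown}, while the vertical bound and the rigidity argument (saturation of both terms along a minimizing sequence, $\lambda_{0}(S)\notin\sigma_{\ess}(S)$ giving $L^{2}$-convergence of the pushdowns to a positive ground state, and a positive-measure contradiction) match the paper's proof. The only cosmetic difference is that the paper handles the regularity of the pushdown by noting $h\in\Lip_{c}(M_{1})$ and using the Lipschitz variational characterization of $\lambda_{0}(S)$, rather than regularizing by $\sqrt{\tilde u^{2}+\varepsilon^{2}}$.
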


It should be emphasized that no assumptions on the geometry or the topology of the manifolds are required in this theorem. In particular, the manifolds do not have to be complete. This, together with the decomposition principle, allows us to derive a similar inequality involving the bottoms of the essential spectra, if the fibers are closed.

It is worth to mention that in some cases $\lambda_{0}(S)$ can be estimated in terms of $\lambda_{0}(M_{1})$. For instance, if the mean curvature of the fibers is bounded by $\| H \| \leq C \leq 2 \sqrt{\lambda_{0}(M_{1})}$, then the bottoms of the spectra satisfy
\[
\lambda_{0}(S) \geq (\sqrt{\lambda_{0}(M_{1})} - C/2)^{2}.
\]
Thus, Theorem \ref{basic mean curv thm} provides a sharper lower bound for $\lambda_{0}(M_{2})$ than \cite[Theorem 1.1]{Mine4} in the case where both of them are applicable.

It is noteworthy that if the submersion has closed fibers, then the operator $S$ defined in (\ref{operator}) coincides with the Schr\"{o}dinger operator introduced in \cite{Mine4}, and there is a remarkable relation with the work of Bordoni \cite{Bordoni} on Riemannian submersions with fibers of basic mean curvature. Given such a submersion $p \colon M_{2} \to M_{1}$ with $M_{2}$ closed, Bordoni considered the restrictions $\Delta_{c}$ and $\Delta_{0}$ of the Laplacian acting on lifted functions and on functions whose average is zero on any fiber, respectively, and showed in \cite[Theorem 1.6]{Bordoni} that the spectrum is written as $\sigma(M_{2}) = \sigma(\Delta_{c}) \cup \sigma(\Delta_{0})$. In this setting, the spectrum of the operator $S$ coincides with the spectrum of $\Delta_{c}$. It should be observed that expressing the latter one as the spectrum of an operator on $M_{1}$ allows us to relate it more easily to the spectrum of $M_{1}$. For Riemannian submersions with closed fibers we obtain the following consequence of Theorem \ref{basic mean curv thm} (compare with \cite[Theorem 1.2]{Mine4}), where we denote by $\lambda_{0}^{\ess}$ the bottom of the essential spectrum of an operator.

\begin{corollary}\label{closed basic}
If $p \colon M_{2} \to M_{1}$ is a Riemannian submersion with closed fibers of basic mean curvature, then $\lambda_{0}(M_{2}) = \lambda_{0}(S)$ and $\lambda_{0}^{\ess}(M_{2}) = \lambda_{0}^{\ess}(S)$. In particular, $M_{2}$ has discrete spectrum if and only if the spectrum of $S$ is discrete.
\end{corollary}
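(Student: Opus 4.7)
The plan is to derive the corollary from Theorem \ref{basic mean curv thm} together with a direct inspection of the Rayleigh quotient of lifted functions. The lower bound $\lambda_{0}(M_{2}) \geq \lambda_{0}(S)$ is immediate from Theorem \ref{basic mean curv thm} because each closed fiber has $\lambda_{0}(F_{x}) = 0$. The substance of the corollary is therefore the matching upper bound and its essential-spectrum analogue.

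For the upper bound, the first step is to realize $S$ as a ground-state conjugate of the restriction of $\Delta_{M_{2}}$ to lifted functions. Let $V(x) := \vol(F_{x})$; this is smooth and positive on $M_{1}$ because the fibers are closed. For a submersion with basic mean curvature, the first variation of the fiber volume yields $p_{*}H = -\nabla \log V$, so that the potential in (\ref{operator}) becomes $\tfrac{1}{4}\|\nabla \log V\|^{2} - \tfrac{1}{2}\Delta \log V$, and multiplication by $V^{1/2}$ conjugates $S$ to the drift Laplacian $\Delta_{V} = \Delta - \langle \nabla \log V, \nabla \cdot \rangle$ on $L^{2}(M_{1}, V\, d\vol)$. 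The fiber integration identities
\[
\int_{M_{2}} |f\circ p|^{2}\, d\vol_{M_{2}} = \int_{M_{1}} |f|^{2} V\, d\vol_{M_{1}}, \qquad \int_{M_{2}} \|\nabla(f\circ p)\|^{2}\, d\vol_{M_{2}} = \int_{M_{1}} \|\nabla f\|^{2} V\, d\vol_{M_{1}},
\]
where the second uses that $\nabla(f\circ p)$ is the horizontal lift of $\nabla f$, realize $f \mapsto f \circ p$ as an isometric embedding $L^{2}(V\, d\vol_{M_{1}}) \hookrightarrow L^{2}(M_{2})$ under which $\Delta_{V}$ corresponds to $\Delta_{M_{2}}$ on the image. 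For any $\phi \in C_{c}^{\infty}(M_{1})$, the Rayleigh quotient of $\phi$ for $S$ then coincides with that of $(V^{-1/2}\phi) \circ p$ on $M_{2}$, and taking the infimum over $\phi$ gives $\lambda_{0}(M_{2}) \leq \lambda_{0}(S)$.

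For the essential-spectrum statement, the approach is to invoke the decomposition principle. Given a compact exhaustion $\{K_{n}\}$ of $M_{1}$, the preimages $p^{-1}(K_{n})$ exhaust $M_{2}$ by compact sets (the map $p$ being proper once the fibers are closed), and each restriction $p \colon M_{2} \setminus p^{-1}(K_{n}) \to M_{1} \setminus K_{n}$ is again a Riemannian submersion with closed fibers of basic mean curvature. Applying the argument above with Dirichlet boundary conditions produces $\lambda_{0}(M_{2} \setminus p^{-1}(K_{n})) = \lambda_{0}(S\vert_{M_{1}\setminus K_{n}})$, and letting $n \to \infty$ in conjunction with the decomposition principle on both sides yields $\lambda_{0}^{\ess}(M_{2}) = \lambda_{0}^{\ess}(S)$. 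The step that will require the most care is the identification $p_{*}H = -\nabla \log V$ with the precise sign convention used in (\ref{operator}); once this and the fiber-integration identities are in place, the ground-state transformation and the decomposition-principle argument are routine.
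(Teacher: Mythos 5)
Your proposal is correct and follows essentially the same route as the paper: the lower bound comes from Theorem \ref{basic mean curv thm} with $\lambda_{0}(F_{x})=0$, the upper bound from identifying $S$ (via $p_{*}H=-\grad\ln V$, i.e.\ $S=\Delta-\Delta\sqrt{V}/\sqrt{V}$) with the renormalized/drift Laplacian on $L^{2}(V\,d\vol_{M_{1}})$ so that lifted functions have matching Rayleigh quotients, and the essential-spectrum statement from the decomposition principle applied to a compact exhaustion of $M_{1}$ and its preimages. The only cosmetic difference is that the paper proves the two essential-spectrum inequalities separately (limits of exterior bottoms for one direction, disjointly supported test functions and the lifted Rayleigh-quotient identity for the other), whereas you assert the equality on each exterior region and pass to the limit, which amounts to the same argument.
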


This corollary generalizes \cite[Theorem 1(ii)]{Bessa}, which asserts that if $p \colon M_{2} \to M_{1}$ is a Riemannian submersion with closed and minimal fibers, then $M_{1}$ has discrete spectrum if and only if $M_{2}$ has discrete spectrum. This equivalence has been extended in \cite[Corollary 1.4]{Mine4} under the weaker assumption that the fibers are closed and of bounded mean curvature. Corollary \ref{closed basic} characterizes the discreteness of the spectrum of $M_{2}$ in terms of $S$ instead of the Laplacian, which, nonetheless, is very natural. More precisely, for warped products of the form $M \times_{\psi} F$ with $F$ closed, this characterization coincides with \cite[Theorem 3.3]{MR577877} of Baider.

If, in addition, the manifolds involved in Corollary \ref{closed basic} are complete, then we know from \cite[Theorem 1.2]{Mine4} that the spectra and the essential spectra satisfy $\sigma(S) \subset \sigma(M_{2})$ and $\sigma_{\ess}(S) \subset \sigma_{\ess}(M_{2})$. This, together with Theorem \ref{basic mean curv thm} and Corollary \ref{closed basic}, shows that it is very reasonable to compare the spectrum of $S$ with the spectrum of $M_{2}$, if the submersion has fibers of basic mean curvature.

In the second part of the paper, we study Riemannian principal bundles. To be more specific, let $G$ be a possibly discrete Lie group acting smoothly, freely and properly on a Riemannian manifold $M_{2}$ via isometries, where $\dim G < \dim M_{2}$. Such an action induces on $M_{1} = M_{2}/G$ the structure of Riemannian manifold. If $G$ is non-discrete, the projection $p \colon M_{2} \to M_{1}$ is a Riemannian submersion with fibers of basic mean curvature. We then say that $p$ is a \textit{Riemannian submersion arising from the action of} $G$.

In the case where $G$ is a discrete group, its action gives rise to a normal Riemannian covering. In this context, there are various results establishing relations between properties of the deck transformation group and the behavior of the spectrum. To be more precise, let $q \colon M_{2} \to M_{1}$ be a normal Riemannian covering with deck transformation group $\Gamma$. Then the bottoms of the spectra satisfy $\lambda_{0}(M_{2}) \geq \lambda_{0}(M_{1})$ (cf. for instance \cite{BMP1} and the references therein). Brooks was the first one to investigate when the equality holds. In \cite{Brooks}, he showed that if $M_{1}$ is closed, then $\Gamma$ is amenable if and only if $\lambda_{0}(M_{2}) = 0$. It is apparent that in this setting, we also have that $\lambda_{0}(M_{1}) = 0$. In \cite{BMP1}, we proved that if $\Gamma$ is amenable, then $\lambda_{0}(M_{2}) = \lambda_{0}(M_{1})$, without any assumptions on the topology or the geometry of $M_{1}$. It was established in \cite{Mine} that if, in addition, $M_{1}$ is complete, then $\sigma(M_{1}) \subset \sigma(M_{2})$. Conversely, according to \cite{Mine2}, if $\lambda_{0}(M_{2}) = \lambda_{0}(M_{1})$ and $\lambda_{0}(M_{1}) \notin \sigma_{\ess}(M_{1})$, then $\Gamma$ is amenable. 

If $G$ is non-discrete, then from the above discussion, it makes sense to compare the spectrum of the Laplacian on the total space with the spectrum of the Schr\"{o}dinger operator $S$ on the base manifold, defined in (\ref{operator}). It should be noticed that Theorem \ref{basic mean curv thm} implies that $\lambda_{0}(M_{2}) \geq \lambda_{0}(S)$. In the following theorem, we extend the aforementioned results to Riemannian submersions arising from Lie group actions, where we denote by $G_{0}$ the connected component of $G$.

\begin{theorem}\label{submersion group thm}
Let $p \colon M_{2} \to M_{1}$ be a Riemannian submersion arising from the action of a Lie group $G$. Then:
\begin{enumerate}[topsep=0pt,itemsep=-1pt,partopsep=1ex,parsep=0.5ex,leftmargin=*, label=(\roman*), align=left, labelsep=0em]
\item If $G$ is amenable and $G_{0}$ is unimodular, then $\lambda_{0}(M_{2}) = \lambda_{0}(S)$.
\item If, in addition, $M_{1}$ is complete, then $\sigma(S) \subset \sigma(M_{2})$.
\item Conversely, if $\lambda_{0}(M_{2}) = \lambda_{0}(S)$ and $\lambda_{0}(S) \notin \sigma_{\ess}(S)$, then $G$ is amenable and $G_{0}$ is unimodular.
\end{enumerate}
\end{theorem}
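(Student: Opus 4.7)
The plan is to prove the three parts by combining Theorem \ref{basic mean curv thm} with the principal bundle structure and a F\o lner-type lifting procedure. Throughout I use that each connected component of a fiber $F_{x} = Gx$ is isometric to $G_{0}$ with a fixed left-invariant metric, and that for such a metric $\lambda_{0}(G_{0}) = 0$ if and only if $G_{0}$ is amenable and unimodular.

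For part (i), the inequality $\lambda_{0}(M_{2}) \ge \lambda_{0}(S)$ follows from Theorem \ref{basic mean curv thm} because $\inf_{x} \lambda_{0}(F_{x}) = \lambda_{0}(G_{0}) = 0$. For the reverse I would fix $\phi \in C_{c}^{\infty}(M_{1})$ with Rayleigh quotient close to $\lambda_{0}(S)$ and a left-F\o lner sequence $\{K_{n}\}$ of compact subsets of $G_{0}$, then construct $\Phi_{n} \in C_{c}^{\infty}(M_{2})$ using a partition of unity subordinate to a trivializing cover of $M_{1}$ so that, in each local trivialization, $\Phi_{n}(x,g) = \phi(x)\chi_{n}(g)$ with $\chi_{n}$ a smooth cutoff approximating $\mathbf{1}_{K_{n}}$. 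Unimodularity of $G_{0}$ forces the fiber Riemannian measure to be the Haar measure on $G_{0}$ times a density $V$ on $M_{1}$, with $p_{*}H = -\nabla\log V$; this makes $S$ unitarily equivalent, via $\phi \mapsto V^{1/2}\phi$, to the weighted Laplacian $V^{-1}\diver(V\nabla \cdot)$ on $L^{2}(V\,dx)$. A Fubini computation then gives
\[
R_{\Delta_{M_{2}}}(\Phi_{n}) = \frac{\int_{M_{1}} V\,|\nabla\phi|^{2}}{\int_{M_{1}} V\,\phi^{2}} + \frac{\|\nabla \chi_{n}\|^{2}_{L^{2}(G_{0})}}{\|\chi_{n}\|^{2}_{L^{2}(G_{0})}} + o(1),
\]
where the second summand vanishes by the F\o lner property and the first is precisely the Rayleigh quotient of $V^{1/2}\phi$ for $S$, yielding $\lambda_{0}(M_{2}) \le \lambda_{0}(S)$.

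Part (ii) would be handled by the same construction applied to a Weyl sequence $(\phi_{n}) \subset C_{c}^{\infty}(M_{1})$ for $\lambda \in \sigma(S)$, available since $M_{1}$ is complete and hence $S$ is essentially self-adjoint on $C_{c}^{\infty}(M_{1})$. Choosing the F\o lner sets $K_{n}$ to grow fast enough relative to $\|(S-\lambda)\phi_{n}\|$, the same Fubini computation yields $\|(\Delta_{M_{2}}-\lambda)\Phi_{n}\|/\|\Phi_{n}\| \to 0$, so $\lambda \in \sigma(\Delta_{M_{2}})$.

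For part (iii), the second statement of Theorem \ref{basic mean curv thm} gives $\lambda_{0}(F_{x}) = 0$ almost everywhere, hence $\lambda_{0}(G_{0}) = 0$, which forces $G_{0}$ amenable and unimodular. To upgrade to amenability of $G$ I would factor the submersion as
\[
p \colon M_{2} \xrightarrow{q} M_{2}/G_{0} \xrightarrow{\pi} M_{1},
\]
where $q$ is a principal $G_{0}$-bundle with connected fibers and $\pi$ is a normal Riemannian covering with deck group $G/G_{0}$. Since the fibers of $q$ are the connected components of the fibers of $p$, their mean-curvature vectors coincide, so the Schr\"{o}dinger operator $\tilde S$ on $M_{2}/G_{0}$ attached to $q$ is the pull-back $\pi^{*}S$. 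Applying part (i) to $q$ yields $\lambda_{0}(M_{2}) = \lambda_{0}(\tilde S)$, hence $\lambda_{0}(\tilde S) = \lambda_{0}(S)$ and $\lambda_{0}(\tilde S) \notin \sigma_{\ess}(\tilde S)$. A Brooks-type converse for normal Riemannian coverings, extended from the Laplacian to the $(G/G_{0})$-invariant Schr\"{o}dinger operator $\tilde S$ by adapting \cite{Mine2}, then forces $G/G_{0}$ amenable, and combined with amenability of $G_{0}$ gives amenability of $G$. The main difficulty I foresee is the F\o lner construction in (i) and (ii): the local cutoffs $\chi_{n}$ must be glued into a globally smooth $\Phi_{n}$ on $M_{2}$ without the horizontal component of $\nabla\chi_{n}$ becoming non-negligible, and in (ii) the growth of $K_{n}$ has to be tuned to absorb simultaneously the Weyl-sequence error and the vertical error introduced by the cutoff.
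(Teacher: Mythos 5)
Your strategy is essentially the paper's: Theorem \ref{basic mean curv thm} gives the lower bound, a F\o lner/Buser-type vertical cutoff glued through local sections together with the renormalization of $S$ by $\sqrt{V}$ gives the upper bound in (i) and transplants Weyl (characteristic) sequences in (ii), and (iii) factors $p$ through $M_{2}/G_{0}$ and invokes a covering converse. The gluing issue you flag is resolved in the paper by choosing the partition of unity on the group at a scale dominating the transition maps, so that the cutoff is identically $1$ off a region whose volume is small by Buser's lemma, while its gradient and Laplacian are bounded uniformly (Lemma \ref{uniform estimate}); your exact Fubini identity then becomes an inequality with a remainder controlled by the volume ratio, which is the same idea.

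There is, however, a genuine gap in (i) and (ii): your argument really only treats the connected case. The density $V$ with $d\vol_{\mathrm{fiber}} = V\, d\vol_{\mathrm{Haar}}$ and $p_{*}H = -\grad \ln V$ is globally defined on $M := M_{2}/G_{0}$ (Corollary \ref{glob defined V} for the $G_{0}$-action), but it does \emph{not} descend to $M_{1}$ unless $G$ itself is unimodular, which is not assumed in (i): with only $G_{0}$ unimodular, right translation by an element of another component rescales fiber volume by $|\det \mathrm{Ad}|$, so distinct $G_{0}$-components over the same point of $M_{1}$ carry different densities (this is precisely what the proof of Corollary \ref{closed cor} exploits to deduce unimodularity of $G$). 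Consequently your identification of $S$ with the $V$-weighted Laplacian \emph{on $M_{1}$}, and the Fubini formula over $M_{1}$, fail for non-connected $G$. The repair is the decomposition you already use in (iii): run your construction for the $G_{0}$-bundle $M_{2} \to M$, obtaining $\lambda_{0}(M_{2}) = \lambda_{0}(\tilde S)$, resp.\ $\sigma(\tilde S) \subset \sigma(M_{2})$, and then pass from $\tilde S$ on $M$ to $S$ on $M_{1}$ through the normal covering with amenable deck group $G/G_{0}$, using that amenable coverings preserve $\lambda_{0}$ of Schr\"odinger operators and, over complete bases, include spectra (the paper cites \cite{BMP1} and \cite{Mine} for exactly this); these inputs are absent from your (i)/(ii). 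A smaller point in (iii): the claim $\lambda_{0}(\tilde S) \notin \sigma_{\ess}(\tilde S)$ is neither needed nor true in general (for infinite $G/G_{0}$ one expects $\sigma(\tilde S) = \sigma_{\ess}(\tilde S)$); the converse covering result of \cite{Mine2} only requires $\lambda_{0}(\tilde S) = \lambda_{0}(S)$ together with the hypothesis $\lambda_{0}(S) \notin \sigma_{\ess}(S)$ on the base, which you do have.
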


Recall that there exist connected Lie groups that are amenable but not unimodular (because any solvable group is amenable), and connected Lie groups that are unimodular but not amenable (since any connected, semisimple Lie group is unimodular).

It is notable that if $G$ is compact, then Corollary \ref{closed basic} compares the spectra and the essential spectra of the operators. Even though Theorem \ref{submersion group thm} is formulated in terms of spectra, it also provides information about the essential spectra. This follows from the fact that if $G$ is non-compact, then $\sigma(M_{2}) = \sigma_{\ess}(M_{2})$ (cf. for example \cite[Theorem 5.2]{Mine}).

As in the context of Riemannian coverings, it is plausible to wonder if the assumption $\lambda_{0}(S) \notin \sigma_{\ess}(S)$ can be weakened in Theorem \ref{submersion group thm}(iii). We will construct a wide class of examples demonstrating that this assumption is essential. Namely, let $M$ be any Riemannian manifold with $\lambda_{0}(M) \in \sigma_{\ess}(M)$. We will show that there exists a Riemannian submersion $p \colon M_{2} \to M_{1} := M$ with minimal fibers, arising from the action of a connected, non-unimodular Lie group $G$, such that $\lambda_{0}(M_{2}) = \lambda_{0}(M_{1})$. Since the submersion has minimal fibers, it is clear that $S$ coincides with the Laplacian on $M_{1}$.

In the case where the base manifold is closed, we derive another analogue of Brooks' result, which is slightly different. This is because in Theorem \ref{submersion group thm} we investigate the validity of $\lambda_{0}(M_{2}) = \lambda_{0}(S)$, while the following corollary characterizes the stronger property $\lambda_{0}(M_{2}) = 0$.

\begin{corollary}\label{closed cor}
Let $p \colon M_{2} \to M_{1}$ be a Riemannian submersion arising from the action of a Lie group $G$, where $M_{1}$ is closed. Then $G$ is unimodular and amenable if and only if $\lambda_{0}(M_{2}) = 0$.
\end{corollary}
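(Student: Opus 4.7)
The strategy is to combine Theorem~\ref{submersion group thm} with the identification that, on a closed base $M_1$, $\lambda_0(S)=0$ if and only if $G$ is unimodular.

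Two ingredients underpin this identification. First, a direct integration by parts shows that $S$ factors as $S=D^*D$ with $Df:=\nabla f+\tfrac{1}{2} f\, p_{*}H$; in particular, on a closed manifold $\lambda_0(S)=0$ is equivalent to the existence of a smooth positive $f$ with $p_{*}H=-2\,\nabla\log f$, i.e.\ to the exactness of the $1$-form $(p_{*}H)^\flat$. Second, for a submersion arising from the $G$-action, the orbit volume density $V(x):=\sqrt{\det(\langle\xi_i^*(x),\xi_j^*(x)\rangle)}$ (defined from a fixed basis $\xi_1,\dots,\xi_k$ of $\mathfrak g$) is a smooth positive function on $M_2$, and a direct mean-curvature computation identifies $H$, up to sign, with the horizontal lift of $\nabla\log V$. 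Since $G$ acts by isometries, $V$ obeys the transformation rule $V(gx)=|\det\mathrm{Ad}(g^{-1})|\cdot V(x)$, so $V$ descends to a smooth positive $\widetilde V$ on $M_1$ precisely when $G$ is unimodular, and in that case $p_{*}H=-\nabla\log\widetilde V$ is exact.

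For the forward direction, assume $G$ is amenable and unimodular; then $G_0$ is unimodular, Theorem~\ref{submersion group thm}(i) yields $\lambda_0(M_2)=\lambda_0(S)$, and taking $f:=\widetilde V^{1/2}$ furnishes $Df=0$, hence $Sf=0$, so $\lambda_0(S)=0$ and thus $\lambda_0(M_2)=0$. For the converse, assume $\lambda_0(M_2)=0$. Since $\inf_{x\in M_1}\lambda_0(F_x)\geq 0$ and $S\geq 0$, Theorem~\ref{basic mean curv thm} gives $0=\lambda_0(M_2)\geq\lambda_0(S)\geq 0$, so $\lambda_0(S)=0$; the closedness of $M_1$ makes $\sigma(S)$ discrete, so $\lambda_0(S)\notin\sigma_{\ess}(S)$, and Theorem~\ref{submersion group thm}(iii) yields $G$ amenable with $G_0$ unimodular. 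To upgrade to $G$ unimodular, the ground state $f>0$ of $S$ supplies, via the first ingredient, the identity $p_{*}H=-2\,\nabla\log f$; combined with the second ingredient, the function $V/(f\circ p)^2$ on $M_2$ has vanishing horizontal gradient and is $G_0$-invariant, and the exactness of $(p_{*}H)^\flat$ propagates this invariance across the whole group, forcing $|\det\mathrm{Ad}(g^{-1})|=1$ for every $g\in G$.

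The main obstacle is the clean execution of the second ingredient: the identification $H=-\nabla^h\log V$ follows from a standard calculation with left-invariant vector fields and the Koszul formula, and the transformation rule of $V$ is immediate from the action being isometric, but keeping track of sign conventions and of the orientation-reversing terms $|\det\mathrm{Ad}(g^{-1})|$ requires some care. The most delicate step is promoting $G_0$-unimodularity to $G$-unimodularity in the backward direction: this requires showing that the horizontal-gradient condition on $V/(f\circ p)^2$, combined with connectedness of $M_1$ and the free, proper $G$-action, forces full $G$-invariance of this ratio, which is where the possibly disconnected component group $G/G_0$ enters the argument.
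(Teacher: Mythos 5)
Your proof is correct and takes essentially the same route as the paper's: both directions rest on Theorem \ref{basic mean curv thm}, Theorem \ref{submersion group thm}, the factorization $\mathcal{R}_{S}(f)=\int_{M_{1}}\|\grad f+\tfrac{f}{2}p_{*}H\|^{2}$, and the fiber-volume density $V$ with $p_{*}H=-\grad\log V$, the positive ground state $\varphi$ of $S$ (available since $M_{1}$ is closed and $\lambda_{0}(S)=0\notin\sigma_{\ess}(S)$) supplying $p_{*}H=-2\grad\log\varphi$ exactly as in the paper. Your upgrade from $G_{0}$- to $G$-unimodularity via the $\mathrm{Ad}$-transformation rule of the orbit-volume density on $M_{2}$ (showing $V/(\varphi\circ p)^{2}$ is constant) is a mild repackaging of the paper's argument that $V$ is a constant multiple of $\tilde{\varphi}^{2}$, hence $G/G_{0}$-invariant, which makes the induced left-invariant volume forms on $G$ right-invariant.
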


Finally, exploiting Theorems \ref{basic mean curv thm} and \ref{submersion group thm}, we study quotients of Lie groups by normal subgroups. In this setting, we obtain some relations between the mean curvature of the subgroup and the bottom of the spectrum of the group, the subgroup and the quotient.

\begin{theorem}\label{Lie group thm}
Let $G$ be a connected Lie group endowed with a left-invariant metric and $N$ be a closed (as a subset), connected, normal subgroup of $G$ with mean curvature $H$. Then
\[
\lambda_{0}(G) \geq \lambda_{0}(G/N) + \lambda_{0}(N) -\frac{1}{4} \| H \|^{2} + \frac{1}{2} \tr (\ad H).
\]
Moreover, $N$ is unimodular and amenable if and only if
\[
\lambda_{0}(G) = \lambda_{0}(G/N) -\frac{1}{4} \| H \|^{2} + \frac{1}{2} \tr (\ad H).
\]
\end{theorem}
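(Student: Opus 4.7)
The plan is to realize $p \colon G \to G/N$ as a Riemannian submersion arising from the action of $N$, compute the Schr\"odinger operator $S$ from (\ref{operator}) explicitly, and then invoke Theorems \ref{basic mean curv thm} and \ref{submersion group thm} together with the classical characterization $\lambda_{0} = 0$ if and only if amenable and unimodular, for connected Lie groups with a left-invariant metric. To begin, $N$ acts smoothly, freely and properly on $G$ by left multiplication, and by isometries thanks to the left-invariance of the metric. Normality ensures that the orbits $Ng = gN$ are exactly the cosets, so the quotient is the Lie group $G/N$ and $p$ is a Riemannian submersion arising from this action, in particular with fibers of basic mean curvature. Since all fibers are mutually isometric via left translations, $\inf_{x} \lambda_{0}(F_{x}) = \lambda_{0}(N)$, and the mean curvature field on $G$ coincides with the left-invariant extension of $H \in \mathfrak{n}^{\perp}$.

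The heart of the argument is the identification of the potential in $S$. Since $p$ is an isometry on horizontal vectors, $p_{*}H$ is the left-invariant vector field on $G/N$ with value $\pi(H) \in \mathfrak{g}/\mathfrak{n}$ at the identity coset, where $\pi$ is the canonical projection; in particular $\|p_{*}H\| = \|H\|$ is constant. Applying the Koszul formula to left-invariant vector fields on any Lie group with a left-invariant metric produces the identity $\diver X = -\tr(\ad X)$, which specializes to $\diver(p_{*}H) = -\tr(\ad_{\mathfrak{g}/\mathfrak{n}} \pi(H))$. Since $\mathfrak{n}$ is an ideal, $\ad H$ preserves $\mathfrak{n}$ and the trace splits as $\tr(\ad H) = \tr(\ad H |_{\mathfrak{n}}) + \tr(\ad_{\mathfrak{g}/\mathfrak{n}} \pi(H))$. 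The key identity is $\|H\|^{2} = \tr(\ad H |_{\mathfrak{n}})$, which I would derive by recomputing the mean curvature via Koszul: for a left-invariant horizontal $Z$ one finds $\langle H, Z\rangle = \tr(\ad Z |_{\mathfrak{n}})$, and substituting $Z = H$ gives the claim. Combining these ingredients, the potential of $S$ collapses to the constant $-\tfrac{1}{4}\|H\|^{2} + \tfrac{1}{2}\tr(\ad H)$, and hence
\[
\lambda_{0}(S) = \lambda_{0}(G/N) - \tfrac{1}{4}\|H\|^{2} + \tfrac{1}{2}\tr(\ad H).
\]

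With $\lambda_{0}(S)$ identified, Theorem \ref{basic mean curv thm} immediately yields the stated lower bound $\lambda_{0}(G) \geq \lambda_{0}(S) + \lambda_{0}(N)$. For the equivalence, let $A$ denote the right-hand side of the claimed equality, so that $A = \lambda_{0}(S)$. If $N$ is amenable and unimodular, then $\lambda_{0}(N) = 0$, and Theorem \ref{submersion group thm}(i) yields $\lambda_{0}(G) = \lambda_{0}(S) = A$. Conversely, the equality $\lambda_{0}(G) = A$ combined with the inequality $\lambda_{0}(G) \geq A + \lambda_{0}(N)$ just proved forces $\lambda_{0}(N) = 0$, and then the classical characterization of the vanishing of $\lambda_{0}$ on connected Lie groups with a left-invariant metric concludes the proof.

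The principal obstacle is the potential computation in the second paragraph: converting the a priori expression $\tfrac{1}{4}\|H\|^{2} + \tfrac{1}{2}\tr(\ad_{\mathfrak{g}/\mathfrak{n}} \pi(H))$ into the announced constant $-\tfrac{1}{4}\|H\|^{2} + \tfrac{1}{2}\tr(\ad H)$ hinges entirely on the Koszul-derived identity $\|H\|^{2} = \tr(\ad H |_{\mathfrak{n}})$. Once that identity is secured, the remainder is a direct invocation of Theorems \ref{basic mean curv thm} and \ref{submersion group thm} plus the observation that the equality case forces $\lambda_{0}(N) = 0$.
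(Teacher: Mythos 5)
Your proposal is correct and follows essentially the same route as the paper: realize $p\colon G \to G/N$ as the submersion arising from the left action of $N$, show the potential of $S$ is the constant $-\tfrac14\|H\|^2 + \tfrac12\tr(\ad H)$ by a Koszul-type computation with left-invariant fields, then apply Theorem \ref{basic mean curv thm} for the inequality, Theorem \ref{submersion group thm}(i) for the forward implication, and the equality case forcing $\lambda_0(N)=0$ plus Theorem \ref{unim and amen} for the converse. Your identities $\diver(p_*H) = -\tr(\ad_{\mathfrak{g}/\mathfrak{n}}\pi(H))$, $\tr(\ad H) = \tr(\ad H|_{\mathfrak{n}}) + \tr(\ad_{\mathfrak{g}/\mathfrak{n}}\pi(H))$ and $\|H\|^2 = \tr(\ad H|_{\mathfrak{n}})$ are exactly a repackaging of the paper's single chained computation $\|H\|^2 = \tr(\ad H) + \diver p_*H$, so the two arguments coincide in substance.
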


As an application of this theorem, we compute the bottom of the spectrum and the Cheeger constant of connected, amenable Lie groups. This extends the result of \cite{MR2053361} in various ways.

\begin{corollary}\label{bottom amen}
Let $G$ be a connected, amenable Lie group endowed with a left-invariant metric. Then the bottom of its spectrum and its Cheeger constant are given by
\[
\lambda_{0}(G) = \frac{1}{4} h(G)^{2} = \frac{1}{4} \max_{X \in \mathfrak{g}, \| X \| =1} (\tr (\ad X))^{2}. 
\]
If $G$ is not unimodular, then the maximum is achieved by the unit vector in the direction of the mean curvature (in $G$) of the commutator subgroup $[S,S]$ of the radical $S$ of $G$.
\end{corollary}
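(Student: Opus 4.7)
My plan is to apply Theorem \ref{Lie group thm} to the codimension-one normal subgroup whose Lie algebra is the kernel of $\tr\circ\ad$, and then to handle the Cheeger constant by a direct divergence-theorem argument. Set $\mathfrak{n}:=\ker(\tr\circ\ad\colon\mathfrak{g}\to\mathbb{R})$; since $\tr[\ad Y,\ad Z]=0$ for any $Y,Z\in\mathfrak{g}$, this is an ideal of $\mathfrak{g}$ containing $[\mathfrak{g},\mathfrak{g}]$, and $\tr\ad$ integrates to a smooth homomorphism $\chi\colon G\to\mathbb{R}$. Assume first that $G$ is not unimodular, so $\chi$ is surjective; set $N:=\ker\chi$. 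By the long exact sequence in homotopy (the target $\mathbb{R}$ is contractible), $N$ is closed, connected, and normal in $G$, with Lie algebra $\mathfrak{n}$. It is amenable as a subgroup of the amenable group $G$, and it is unimodular: for $X\in\mathfrak{n}$, the operator $\ad X$ sends $\mathfrak{g}$ into $\mathfrak{n}$ and acts as zero on $\mathfrak{g}/\mathfrak{n}$, so $\tr(\ad X|_\mathfrak{n})=\tr\ad X=0$. Finally $G/N\cong\mathbb{R}$ has $\lambda_0(G/N)=0$.

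The equality case of Theorem \ref{Lie group thm} then reads $\lambda_0(G)=-\tfrac14\|H\|^2+\tfrac12\tr(\ad H)$, where $H$ is the mean curvature vector of $N$ at $e$. Writing $H=cX_0$ for a unit $X_0\in\mathfrak{n}^\perp$, the Koszul formula for left-invariant metrics yields $c=\langle H,X_0\rangle=\tr(\ad X_0|_\mathfrak{n})$, which equals $\tr\ad X_0$ since $\mathfrak{g}/\mathfrak{n}$ is spanned by $X_0$ on which $\ad X_0$ vanishes. Substituting,
\[
\lambda_0(G)=-\tfrac14 c^2+\tfrac12 c^2=\tfrac14(\tr\ad X_0)^2,
\]
and because $X_0$ is the unit vector in $(\ker\tr\ad)^\perp$, $|\tr\ad X_0|$ is the operator norm of the functional $\tr\ad$, i.e.\ $\max_{\|X\|=1}|\tr\ad X|$. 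The unimodular case $\tr\ad\equiv 0$ is classical: connected, unimodular, amenable Lie groups admit Riemannian F\o{}lner exhaustions and hence satisfy $\lambda_0(G)=0$, matching $\tfrac14\max(\tr\ad X)^2=0$.

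For the Cheeger constant, Cheeger's inequality supplies $h(G)\leq 2\sqrt{\lambda_0(G)}=|\tr\ad X_0|$. For the opposite bound, Koszul again gives $\diver X_0=-\tr\ad X_0$, a constant by left-invariance, so $Y:=-\sign(c)X_0$ has $|Y|\equiv 1$ and $\diver Y=|c|$; the divergence theorem then yields
\[
|c|\vol(\Omega)=\int_\Omega\diver Y\,d\vol=\int_{\partial\Omega}\langle Y,\nu\rangle\,dA\leq\area(\partial\Omega)
\]
for every precompact $\Omega$ with smooth boundary, hence $h(G)\geq|c|$. To identify the maximizer with $H_{[S,S]}$: amenability of $G$ forces every Levi complement of the radical $\mathfrak{s}$ to be compact semisimple, hence unimodular. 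Decomposing $X=X_\mathfrak{s}+X_\mathfrak{h}$ along a Levi splitting and using that $\ad X_\mathfrak{s}$ carries $\mathfrak{s}$ into $[\mathfrak{s},\mathfrak{s}]$ while $\ad X_\mathfrak{h}$ has trace zero on any representation of the compact algebra $\mathfrak{h}$, one checks that $\tr\ad X=\tr(\ad X|_{[\mathfrak{s},\mathfrak{s}]})$ for every $X\in\mathfrak{g}$. The Koszul formula identifies this with $\langle H_{[S,S]},X\rangle$ on $[\mathfrak{s},\mathfrak{s}]^\perp$, and since $\tr\ad$ vanishes on the nilpotent ideal $[\mathfrak{s},\mathfrak{s}]$, the two linear functionals agree on all of $\mathfrak{g}$; thus the unit vector in the direction of $H_{[S,S]}$ is $X_0$. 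The main delicacy is the correct sign-tracking in the Koszul and divergence computations.
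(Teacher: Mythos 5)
Your proof is correct, but it takes a genuinely different route from the paper's. The paper applies Theorem \ref{Lie group thm} to $N=[S,S]$ (its Proposition \ref{aux}): it must first argue that $[S,S]$ is closed, connected, nilpotent (hence unimodular and amenable) via the universal cover of $S$, and that $G/[S,S]$ is a compact extension of an abelian group so that $\lambda_{0}(G/[S,S])=0$; this gives $\lambda_{0}(G)=\tfrac14\|H_{[S,S]}\|^{2}=\tfrac14\tr(\ad H_{[S,S]})$, and the identification of the maximizing direction with $H_{[S,S]}$ then falls out of the equality chain $\tfrac14 h(G)^{2}\geq\tfrac14(\tr\ad H_{0})^{2}=\lambda_{0}(G)\geq\tfrac14 h(G)^{2}$ combining the divergence-theorem bound $h(G)\geq\max_{\|X\|=1}\tr(\ad X)$ with Cheeger's inequality. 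You instead apply Theorem \ref{Lie group thm} to $N=\ker\chi$, where $\chi$ is the modular character: this buys you closedness of $N$ for free, $G/N\cong\mathbb{R}$ so $\lambda_{0}(G/N)=0$ trivially, and easy unimodularity/amenability of $N$, yielding $\lambda_{0}(G)=\tfrac14(\tr\ad X_{0})^{2}=\tfrac14\max_{\|X\|=1}(\tr\ad X)^{2}$ directly; your Cheeger-constant argument (divergence theorem plus Cheeger's inequality) is the same as the paper's Proposition \ref{vector field} and Corollary \ref{cheeger estimate}; and you obtain the final claim by proving the clean linear identity $\tr\ad X=\tr(\ad X|_{[\mathfrak{s},\mathfrak{s}]})=\langle H_{[S,S]},X\rangle$ for all $X\in\mathfrak{g}$, which I checked is valid (it needs only that $[\mathfrak{s},\mathfrak{s}]$ is an ideal, that $\ad X_{\mathfrak{s}}$ is block-strictly-triangular modulo $[\mathfrak{s},\mathfrak{s}]$, that traces of representations of the semisimple Levi factor vanish, and the Koszul computation as in formula (\ref{mean curv})); this makes the ``direction of $H_{[S,S]}$'' statement an explicit fact rather than a by-product of the sandwich, and it also shows automatically that $H_{[S,S]}\neq 0$ when $G$ is not unimodular. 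Two small glosses you should make explicit: the functional $\tr\ad$ ``integrates'' because $\chi=\ln\det\mathrm{Ad}$ is globally defined on the connected group $G$ (an arbitrary character of $\mathfrak{g}$ need not integrate when $G$ is not simply connected), and your ``classical'' unimodular case is exactly Theorem \ref{unim and amen} of the paper, which together with Cheeger's inequality also gives $h(G)=0$ there.
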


The paper is organized as follows: In Section \ref{preliminaries}, we discuss some basic properties of Schr\"{o}dinger operators, Riemannian submersions and Lie groups. In particular, we provide a spectral theoretic characterization for connected, amenable and unimodular Lie groups, which is well known for simply connected Lie groups. In Section \ref{sec basic mean}, we study Riemannian submersions with fibers of basic mean curvature and prove Theorem \ref{basic mean curv thm} and Corollary \ref{closed basic}. In Section \ref{sec principal}, we focus on submersions arising from Lie group actions and establish Theorem \ref{submersion group thm} and Corollary \ref{closed cor}. In Section \ref{Lie group sec}, we discuss some consequences of our results to Lie groups and show Theorem \ref{Lie group thm} and Corollary \ref{bottom amen}.

\medskip

\textbf{Acknowledgements.} I would like to thank Werner Ballmann and Dorothee Sch\"{u}th for their helpful comments and remarks. I am also grateful to the Max Planck Institute for Mathematics in Bonn for its support and hospitality.

\section{Preliminaries}\label{preliminaries}

Throughout this paper manifolds are assumed to be connected and without boundary, unless otherwise stated, except for Lie groups. Let $M$ be a possibly non-connected Riemannian manifold. A \textit{Schr\"{o}dinger operator} on $M$ is an operator of the form $S = \Delta + V$, where $\Delta$ is the Laplacian on $M$ and $V \in C^{\infty}(M)$, such that there exists $c \in \mathbb{R}$ satisfying
\[
\langle S f, f \rangle_{L^{2}(M)} \geq c \| f \|_{L^{2}(M)}^{2}
\]
for any $f \in C^{\infty}_{c}(M)$. Then the operator
\[
S \colon C^{\infty}_{c}(M) \subset L^{2}(M) \to L^{2}(M)
\]
admits Friedrichs extension, being densely defined, symmetric and bounded from below. It is worth to point out that if $M$ is complete, then this operator is essentially self-adjoint; that is, its closure coincides with its Friedrichs extension (cf. \cite[Proposition 2.4]{Mine4}).

The spectrum and the essential spectrum of (the Friedrichs extension of) $S$ are denoted by $\sigma(S)$ and $\sigma_{\ess}(S)$, respectively, and their bottoms (that is, their minimums) by $\lambda_{0}(S)$ and $\lambda_{0}^{\ess}(S)$, respectively. In the case of the Laplacian (that is, $V=0$) we write $\sigma(M), \sigma_{\ess}(M)$ and $\lambda_{0}(M), \lambda_{0}^{\ess}(M)$ for these sets and quantities. We have by definition that $\lambda_{0}^{\ess}(S) = + \infty$ if $S$ has \textit{discrete spectrum}, which means that $\sigma_{\ess}(S)$ is empty. If $\sigma_{\ess}(M)$ is empty, we say that $M$ has discrete spectrum. 

The \textit{Rayleigh quotient} of a non-zero $f \in \Lip_{c}(M)$ with respect to $S$ is defined by
\[
\mathcal{R}_{S}(f) := \frac{\int_{M} (\| \grad f \|^{2} + Vf^{2})}{\int_{M} f^{2}}.
\]
The Rayleigh quotient of $f$ with respect to the Laplacian is denoted by $\mathcal{R}(f)$, or by $\mathcal{R}_{g}(f)$ if the Riemannian metric $g$ of $M$ is not clear from the context. According to the next proposition, the bottom of the spectrum of $S$ can be expressed as an infimum of Rayleigh quotients (cf. for example \cite[Section 2]{Mine2} and the references therein).

\begin{proposition}\label{bottom}
Let $S$ be a Schr\"{o}dinger operator on a Riemannian manifold $M$. Then the bottom of the spectrum of $S$ satisfies
\[
\lambda_{0}(S) = \inf_{f} \mathcal{R}_{S}(f),
\]
where the infimum is taken aver all $f \in C^{\infty}_{c}(M) \smallsetminus \{0\}$, or over all $f \in \Lip_{c}(M) \smallsetminus \{0\}$.
\end{proposition}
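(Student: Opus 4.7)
The plan is to establish the $C^\infty_c$ characterization first, using the variational description of the spectrum via the associated quadratic form, and then bridge from $C^\infty_c(M)$ to $\Lip_c(M)$ by a standard density argument.

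By construction, the Friedrichs extension of $S$ is the self-adjoint operator associated with the closure $\bar q$ of the quadratic form
\[
q(f) = \langle Sf, f \rangle_{L^{2}(M)} = \int_{M} \bigl( \|\grad f\|^{2} + V f^{2} \bigr), \quad f \in C^\infty_c(M),
\]
where the second equality comes from integration by parts, valid because $f$ has compact support. Thus $q(f)/\|f\|_{L^{2}(M)}^{2} = \mathcal{R}_S(f)$. The min-max principle applied to this self-adjoint extension yields
\[
\lambda_0(S) = \inf \frac{\bar q(f)}{\|f\|_{L^{2}(M)}^{2}},
\]
the infimum taken over all nonzero $f$ in the form domain of $\bar q$. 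Since $C^\infty_c(M)$ is by definition dense in that form domain with respect to the form norm, the infimum equals $\inf \mathcal{R}_S(f)$ over $f \in C^\infty_c(M) \smallsetminus \{0\}$, which proves the first equality.

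For the $\Lip_c$ assertion, the inclusion $C^\infty_c(M) \subset \Lip_c(M)$ immediately gives $\inf_{\Lip_c} \mathcal{R}_S \leq \lambda_0(S)$. For the reverse inequality, given $f \in \Lip_c(M) \smallsetminus \{0\}$ with $\supp f \subset K$, one constructs a sequence $f_n \in C^\infty_c(M)$, supported in a fixed compact neighborhood $K'$ of $K$, such that $f_n \to f$ in $L^2(M)$ and $\grad f_n \to \grad f$ in $L^2(M)$; note that $\grad f$ exists almost everywhere by Rademacher's theorem. This is achieved by covering $K$ with finitely many coordinate charts, decomposing $f$ via a subordinate partition of unity into Lipschitz functions supported in each chart, mollifying each piece in Euclidean coordinates by convolution with a smooth bump, and pulling back to $M$. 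Since $V$ is continuous and hence bounded on $K'$, one obtains $\mathcal{R}_S(f_n) \to \mathcal{R}_S(f)$, whence $\mathcal{R}_S(f) \geq \lambda_0(S)$, as desired.

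The conceptual core of the proposition, namely the identification of $\lambda_0(S)$ with an infimum of Rayleigh quotients over a form core, is a direct consequence of the min-max principle for self-adjoint operators bounded below. The only real technical point is the mollification step on a general (possibly incomplete) Riemannian manifold for the $\Lip_c$ version, which is routine via charts and a subordinate partition of unity; no substantial obstacle is anticipated.
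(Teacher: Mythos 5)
Your proof is correct. Note that the paper does not prove this proposition at all: it is quoted as a known fact with a pointer to \cite[Section 2]{Mine2} and the references therein, so there is no internal argument to compare against. Your route --- identifying $\lambda_{0}(S)$ with the infimum of the closed quadratic form of the Friedrichs extension, restricting to the form core $C^{\infty}_{c}(M)$ by density in the form norm, and then passing from $C^{\infty}_{c}(M)$ to $\Lip_{c}(M)$ by chart-wise mollification with a partition of unity (using Rademacher for the a.e.\ gradient and boundedness of $V$ on a compact neighborhood of the support) --- is exactly the standard variational argument that the cited sources rely on, and each step as you state it goes through.
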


A remarkable property of the essential spectrum of $S$ follows from the decomposition principle, which states that
\[
\sigma_{\ess}(S) = \sigma_{\ess}(S, M \smallsetminus K)
\]
for any smoothly bounded, compact domain $K$ of $M$. This is well known in the case where $M$ is complete (compare with \cite[Proposition 2.1]{MR544241}), but also holds if $M$ is non-complete (cf. for instance \cite[Theorem A.17]{BP}). The next proposition summarizes the properties of the bottom of the essential spectrum that will be used in the sequel.

\begin{proposition}\label{bottom essential spectrum}
Let $S$ be a Schr\"{o}dinger operator on a Riemannian manifold $M$ and consider an exhausting sequence $(K_{n})_{n \in \mathbb{N}}$ of $M$ consisting of smoothly bounded, compact domains. Then the bottom of the essential spectrum of $S$ is given by
\[
\lambda_{0}^{\ess}(S) = \lim_{n} \lambda_{0}(S, M \smallsetminus K_{n}).
\]
In particular, there exists $(f_{n})_{n \in \mathbb{N}} \subset C^{\infty}_{c}(M) \smallsetminus \{0\}$ with $\supp f_{n}$ pairwise disjoint, such that $\mathcal{R}_{S}(f_{n}) \rightarrow \lambda_{0}^{\ess}(S)$.
Furthermore, for any sequence $(f_{n})_{n \in \mathbb{N}} \subset C^{\infty}_{c}(M) \smallsetminus \{0\}$ with $\supp f_{n}$ pairwise disjoint, we have that
\[
\lambda_{0}^{\ess}(S) \leq \liminf_{n} \mathcal{R}_{S}(f_{n}).
\]
\end{proposition}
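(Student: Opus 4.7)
The plan is to establish the three assertions not in the stated order but by a cycle. First, the decomposition principle will yield the easy inequality $L := \lim_n \lambda_0(S, M\smallsetminus K_n) \leq \lambda_0^{\ess}(S)$. Next, I will prove the final inequality of the proposition, $\lambda_0^{\ess}(S) \leq \liminf_n \mathcal{R}_S(f_n)$ for any sequence with pairwise disjoint supports, via a min-max argument. Finally, I will feed a concrete sequence of almost-minimizers on the sets $M\smallsetminus K_n$ into this inequality to close the cycle and obtain $L = \lambda_0^{\ess}(S)$; that same sequence will also deliver the ``in particular'' assertion.

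For the first leg, the decomposition principle recalled just before the statement gives $\sigma_{\ess}(S) = \sigma_{\ess}(S, M\smallsetminus K_n)$, hence $\lambda_0^{\ess}(S) = \lambda_0^{\ess}(S, M\smallsetminus K_n) \geq \lambda_0(S, M\smallsetminus K_n)$ for every $n$. Since Proposition \ref{bottom} characterizes $\lambda_0(S, M\smallsetminus K_n)$ as an infimum of Rayleigh quotients over a test-function space that shrinks with $n$, the sequence $\lambda_0(S, M\smallsetminus K_n)$ is non-decreasing in $n$, so $L \in \R\cup\{+\infty\}$ is well defined and bounded above by $\lambda_0^{\ess}(S)$.

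The heart of the argument is the variational inequality. For $(f_n)\subset C_c^\infty(M)\smallsetminus\{0\}$ with pairwise disjoint supports, locality of the Schrödinger operator gives $\langle Sf_i, f_j\rangle_{L^2(M)} = 0$ whenever $i\neq j$ (since $Sf_i$ is supported in $\supp f_i$, where $f_j$ vanishes). Consequently, for any finite linear combination $u = \sum_{i=1}^N c_i f_i$,
\[
\mathcal{R}_S(u) = \frac{\sum_i |c_i|^2 \|f_i\|_{L^2}^2\, \mathcal{R}_S(f_i)}{\sum_i |c_i|^2 \|f_i\|_{L^2}^2} \leq \max_{i\leq N} \mathcal{R}_S(f_i).
\]
Suppose, for contradiction, that $\liminf_n \mathcal{R}_S(f_n) < \mu < \lambda_0^{\ess}(S)$, and extract a subsequence with $\mathcal{R}_S(f_{n_k}) \leq \mu$ for all $k$. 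For every $N$ the subspace $\spa\{f_{n_1}, \ldots, f_{n_N}\}$ is $N$-dimensional and carries $\mathcal{R}_S \leq \mu$, so the min-max principle forces the spectral projection of $S$ onto $(-\infty, \mu]$ to have rank at least $N$. Letting $N\to\infty$ yields an infinite-dimensional such range, contradicting $\mu < \lambda_0^{\ess}(S)$.

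To close the cycle, for each $n$ Proposition \ref{bottom} applied on $M\smallsetminus K_n$ provides $f_n \in C_c^\infty(M\smallsetminus K_n)\smallsetminus\{0\}$ with $\mathcal{R}_S(f_n) \leq \lambda_0(S, M\smallsetminus K_n) + 1/n$. Since each $\supp f_n$ is compact in $M$ and $(K_n)$ exhausts $M$, an inductive choice extracts a subsequence whose supports are pairwise disjoint while still $\mathcal{R}_S(f_{n_k}) \to L$. Applying the variational inequality to this subsequence yields $\lambda_0^{\ess}(S) \leq L$, completing the limit formula and exhibiting precisely the sequence required in the ``in particular'' clause. The only point requiring some care is the degenerate case $\lambda_0^{\ess}(S) = +\infty$: there the contradiction in the variational step rules out every finite $\mu$, forcing $\liminf_n \mathcal{R}_S(f_n) = +\infty$ on any disjoint-support sequence, which is exactly what the statement requires.
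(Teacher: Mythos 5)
Your proposal is correct, and its skeleton coincides with the paper's: the decomposition principle gives $\lambda_{0}^{\ess}(S) = \lambda_{0}^{\ess}(S, M \smallsetminus K_{n}) \geq \lambda_{0}(S, M \smallsetminus K_{n})$ for every $n$, and feeding almost-minimizers on $M \smallsetminus K_{n}$, thinned out so that their supports are pairwise disjoint, into the disjoint-support inequality closes the loop and simultaneously produces the sequence required in the ``in particular'' clause. The genuine difference lies in the third assertion: the paper simply cites it from \cite[Proposition 2.2]{Mine4}, whereas you prove it from scratch, using locality of $S$ (disjoint supports give orthogonality both in $L^{2}$ and for the quadratic form, so the Rayleigh quotient of any combination $\sum_i c_i f_i$ is a convex combination of the $\mathcal{R}_{S}(f_i)$) together with the form version of the min-max principle (Glazman's lemma) for the Friedrichs extension: an $N$-dimensional subspace of the form domain on which the Rayleigh quotient is at most $\mu < \lambda_{0}^{\ess}(S)$ forces the spectral projection onto $(-\infty,\mu]$ to have rank at least $N$, while below the essential spectrum this projection has finite rank. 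This buys self-containedness at the cost of invoking the min-max principle for forms; the step is legitimate because $C^{\infty}_{c}(M)$ lies in the form domain of the Friedrichs extension and the functions $f_{n_1},\dots,f_{n_N}$ are linearly independent, being non-zero with pairwise disjoint supports. Your treatment of the degenerate case $\lambda_{0}^{\ess}(S) = +\infty$ and the monotonicity of $\lambda_{0}(S, M \smallsetminus K_{n})$, which justifies the existence of the limit, are also fine.
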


\begin{proof}
The third assertion may be found for example in \cite[Proposition 2.2]{Mine4}. From this and Proposition \ref{bottom}, it is not hard to see that
\[
\lambda_{0}^{\ess}(S) \leq \lim_{n} \lambda_{0}(S, M \smallsetminus K_{n}),
\]
while the decomposition principle gives that $\lambda_{0}^{\ess}(S) = \lambda_{0}^{\ess}(S, M \smallsetminus K_{n}) \geq \lambda_{0}(S, M \smallsetminus K_{n})$ for any $n \in \mathbb{N}$, as we wished. The proof is completed by the first part and Proposition \ref{bottom}.\qed
\end{proof}\medskip

For $\lambda \in \mathbb{R}$, a sequence $(f_{n})_{n \in \mathbb{N}} \subset C^{\infty}_{c}(M) \smallsetminus \{0\}$ is called a \textit{characteristic sequence} for $S$ and $\lambda$ if
\[
\frac{\| (S - \lambda) f_{n} \|_{L^{2}(M)}}{\| f_{n} \|_{L^{2}(M)}} \rightarrow 0, \text{ as } n \rightarrow + \infty.
\]
In general, the spectrum of a self-adjoint operator consists of approximate eigenvalues of the operator. In our context, if $M$ is complete, then $S$ is essentially self-adjoint. This allows us to characterize the spectrum of $S$ in terms of compactly supported smooth functions as follows.

\begin{proposition}\label{characteristic seq}
Let $S$ be a Schr\"{o}dinger operator on a complete Riemannian manifold $M$ and consider $\lambda \in \mathbb{R}$. Then $\lambda \in \sigma(S)$ if and only if there is a characteristic sequence for $S$ and $\lambda$.
\end{proposition}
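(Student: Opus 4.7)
The plan is to reduce the statement to the standard Weyl criterion for self-adjoint operators, using the completeness of $M$ to identify $C^{\infty}_{c}(M)$ as a core for the Friedrichs extension of $S$.

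First I would handle the ``if'' direction, which is essentially immediate. Given a characteristic sequence $(f_{n}) \subset C^{\infty}_{c}(M) \smallsetminus \{0\}$, I normalize by setting $u_{n} := f_{n} / \| f_{n} \|_{L^{2}(M)}$. Since $C^{\infty}_{c}(M)$ is contained in the domain of the Friedrichs extension $S_{F}$, and $S_{F}$ agrees with $S$ on $C^{\infty}_{c}(M)$, we have $\| (S_{F} - \lambda) u_{n} \|_{L^{2}(M)} \to 0$ with $\| u_{n} \|_{L^{2}(M)} = 1$. By the Weyl criterion applied to the self-adjoint operator $S_{F}$, this forces $\lambda \in \sigma(S_{F}) = \sigma(S)$.

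For the ``only if'' direction, suppose $\lambda \in \sigma(S)$. The Weyl criterion for the self-adjoint operator $S_{F}$ yields a sequence $(u_{n}) \subset \mathrm{Dom}(S_{F})$ with $\| u_{n} \|_{L^{2}(M)} = 1$ and $\| (S_{F} - \lambda) u_{n} \|_{L^{2}(M)} \to 0$. The key input now is that, because $M$ is complete, $S$ is essentially self-adjoint on $C^{\infty}_{c}(M)$, as recalled in the excerpt. Equivalently, $C^{\infty}_{c}(M)$ is a core for $S_{F}$, so for each $n$ there exists $\varphi_{n} \in C^{\infty}_{c}(M)$ such that both $\| \varphi_{n} - u_{n} \|_{L^{2}(M)} < 1/n$ and $\| S \varphi_{n} - S_{F} u_{n} \|_{L^{2}(M)} < 1/n$ (a diagonal selection from the approximating sequences provided by the core property).

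Then $\| \varphi_{n} \|_{L^{2}(M)} \to 1$, and the triangle inequality
\[
\| (S - \lambda) \varphi_{n} \|_{L^{2}(M)} \leq \| S \varphi_{n} - S_{F} u_{n} \|_{L^{2}(M)} + \| (S_{F} - \lambda) u_{n} \|_{L^{2}(M)} + |\lambda| \| u_{n} - \varphi_{n} \|_{L^{2}(M)}
\]
shows that $\| (S - \lambda) \varphi_{n} \|_{L^{2}(M)} \to 0$, hence $(\varphi_{n})$ is a characteristic sequence for $S$ and $\lambda$. The only real subtlety is the appeal to the core property, which in turn relies crucially on the completeness hypothesis; without it, one could not lift the abstract Weyl sequence in $\mathrm{Dom}(S_{F})$ to a sequence in $C^{\infty}_{c}(M)$.
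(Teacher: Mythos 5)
Your proposal is correct and follows the same route the paper has in mind: the paper justifies the proposition by noting that the spectrum of a self-adjoint operator consists of approximate eigenvalues and that completeness makes $S$ essentially self-adjoint on $C^{\infty}_{c}(M)$, which is exactly the Weyl-criterion-plus-core argument you spell out. Your only addition is to make the core approximation in the graph norm explicit, which is a faithful elaboration rather than a different method.
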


Assume now that $\varphi \in C^{\infty}(M)$ is a positive solution of $S \varphi = \lambda \varphi$ for some $\lambda \in \mathbb{R}$. Denote by $L^{2}_{\varphi}(M)$ the $L^{2}$-space of $M$ with respect to the measure $\varphi^{2} d \vol$, where $d \vol$ is the volume element of $M$ induced by its Riemannian metric. It straightforward to verify that the isometric isomorphism $m_{\varphi} \colon L^{2}_{\varphi}(M) \to L^{2}(M)$, defined by $m_{\varphi} f = \varphi f$, intertwines $S$ with the diffusion operator
\[
L := m_{\varphi}^{-1} \circ (S - \lambda) \circ m_{\varphi} = \Delta - 2 \grad \ln \varphi.
\]
The operator $L$ is called \textit{renormalization} of $S$ with respect to $\varphi$. The Rayleigh quotient of a non-zero $f \in C^{\infty}_{c}(M)$ with respect to $L$ is defined by
\[
\mathcal{R}_{L}(f) := \frac{\langle Lf , f \rangle_{L^{2}_{\varphi}(M)}}{\| f \|_{L^{2}_{\varphi}(M)}^{2}} = \frac{\int_{M} \| \grad f\|^{2} \varphi^{2}}{\int_{M} f^{2} \varphi^{2}}.
\]
\begin{lemma}\label{renormalized comp}
For any non-zero $f \in C^{\infty}_{c}(M)$ and $c \in \mathbb{R}$, we have that:
\begin{enumerate}[topsep=0pt,itemsep=-1pt,partopsep=1ex,parsep=0.5ex,leftmargin=*, label=(\roman*), align=left, labelsep=0em]
\item $\mathcal{R}_{L}(f) = \mathcal{R}_{S}(\varphi f) - \lambda$,
\item $\| (L - c) f \|_{L^{2}_{\varphi}(M)} = \| (S - \lambda - c) (\varphi f)\|_{L^{2}(M)}$.
\end{enumerate}
\end{lemma}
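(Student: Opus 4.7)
The proof should be essentially a formal consequence of the way $L$ was defined, namely as the conjugation of $S - \lambda$ by the isometric isomorphism $m_{\varphi}$. The plan is to first isolate a single pointwise intertwining identity, and then to apply it twice, once with inner products (for (i)) and once with norms (for (ii)).

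The first step is to establish, for every $f \in C^{\infty}_{c}(M)$, the pointwise identity
\[
\varphi \cdot Lf = (S - \lambda)(\varphi f).
\]
This should follow from a direct computation: by the product rule, $\Delta(\varphi f) = \varphi \Delta f + f \Delta \varphi - 2 \langle \grad \varphi, \grad f \rangle$, so that
\[
(S - \lambda)(\varphi f) = \varphi \Delta f - 2 \langle \grad \varphi, \grad f \rangle + (S\varphi - \lambda \varphi) f = \varphi \Delta f - 2 \langle \grad \varphi, \grad f \rangle,
\]
using the assumption $S\varphi = \lambda \varphi$. Dividing by $\varphi$ yields $L f = \Delta f - 2 \langle \grad \ln \varphi, \grad f \rangle$, which is precisely the definition of $L$. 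Shifting $S$ by a constant commutes with multiplication operators, so the same argument immediately gives $\varphi \cdot (L - c) f = (S - \lambda - c)(\varphi f)$.

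Once this intertwining is in hand, part (ii) follows by taking $L^{2}(M)$-norms of both sides and using the fact that $m_{\varphi}$ is an isometry from $L^{2}_{\varphi}(M)$ onto $L^{2}(M)$; that is, $\| h \|_{L^{2}_{\varphi}(M)} = \| \varphi h \|_{L^{2}(M)}$ for every $h$. Applying this with $h = (L - c) f$ gives the desired equality. For part (i), the analogous statement for inner products yields
\[
\langle Lf, f \rangle_{L^{2}_{\varphi}(M)} = \langle \varphi L f, \varphi f \rangle_{L^{2}(M)} = \langle (S - \lambda)(\varphi f), \varphi f \rangle_{L^{2}(M)} = \langle S(\varphi f), \varphi f \rangle_{L^{2}(M)} - \lambda \| \varphi f \|_{L^{2}(M)}^{2}.
\]
Dividing by $\| f \|_{L^{2}_{\varphi}(M)}^{2} = \| \varphi f \|_{L^{2}(M)}^{2}$ then produces $\mathcal{R}_{S}(\varphi f) - \lambda$, as claimed.

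There is no genuine obstacle here; the only step requiring any actual computation is the intertwining identity, and everything else is bookkeeping between the two $L^{2}$-spaces linked by the isometry $m_{\varphi}$. One small point to be careful about is that the argument implicitly uses that $S$ is a genuine differential operator on smooth functions (not only its Friedrichs extension), so that the pointwise equality $\varphi L f = (S - \lambda)(\varphi f)$ makes sense; since $f \in C^{\infty}_{c}(M)$ and $\varphi \in C^{\infty}(M)$, this poses no difficulty.
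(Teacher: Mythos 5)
Your proposal is correct and follows essentially the same route as the paper, which simply notes that the lemma follows from the definition of $L$ as $m_{\varphi}^{-1}\circ(S-\lambda)\circ m_{\varphi}$ together with the fact that $m_{\varphi}$ is an isometric isomorphism; your explicit verification of the intertwining identity $\varphi\, Lf=(S-\lambda)(\varphi f)$ is exactly the computation the paper declares "straightforward to verify" when introducing $L$. The only detail worth flagging is that identifying $\langle S(\varphi f),\varphi f\rangle_{L^{2}(M)}$ with $\int_{M}\bigl(\|\grad(\varphi f)\|^{2}+V(\varphi f)^{2}\bigr)$ in part (i) uses one integration by parts, which is harmless since $\varphi f\in C^{\infty}_{c}(M)$.
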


\begin{proof}
Follows immediately from the definition of $L$ and the fact that $m_{\varphi}$ is an isometric isomorphism. \qed
\end{proof}

\subsection{Riemannian submersions}

Let $M_{1}$ and $M_{2}$ be Riemannian manifolds with $\dim M_{1} < \dim M_{2}$. A surjective smooth map $p \colon M_{2} \to M_{1}$ is called a \textit{submersion} if its differential is surjective at any point. The kernel of $p_{*y}$ is called the \textit{vertical space} at $y \in M_{2}$, and its orthogonal complement in $T_{y}M_{2}$ is called the \textit{horizontal space} at $y$. These spaces are denoted by $(T_{y}M_{2})^{v}$ and $(T_{y}M_{2})^{h}$, respectively. It is evident that the fiber $F_{x} := p^{-1}(x)$ over $x \in M_{1}$ is a possibly non-connected submanifold of $M_{2}$ and $(T_{y}M_{2})^{v}$ is the tangent space of $F_{x}$ at $y \in F_{x}$. The submersion $p$ is called \textit{Riemannian submersion} if the restriction $p_{*y} \colon (T_{y}M_{2})^{h} \to T_{p(y)}M_{1}$ is an isometry for any $y \in M_{2}$. For more details, see \cite{MR2110043}.

Given a Riemannian submersion $p \colon M_{2} \to M_{1}$, a smooth map $s \colon U \to M_{2}$ defined on an open subset $U$ of $M_{1}$, is called \textit{section} if $(p \circ s)(x) = x$ for any $x \in U$. We say that a section $s \colon U \subset M_{1} \to M_{2}$ is \textit{extensible} if it can be extended to a section $s^{\prime} \colon U^{\prime} \subset M_{1} \to M_{2}$ with $\bar{U} \subset U^{\prime}$.

A vector field $Y$ on $M_{2}$ is called \textit{horizontal} (\textit{vertical}) if $Y(y)$ belongs to the horizontal (vertical, respectively) space at $y$ for any $y \in M_{2}$. It is easily checked that any vector field $Y$ on $M_{2}$ can be written as $Y = Y^{h} + Y^{v}$ with $Y^{h}$ horizontal and $Y^{v}$ vertical. Moreover, any vector field $X$ on $M_{1}$ has a unique horizontal lift $\tilde{X}$ on $M_{2}$; that is, $\tilde{X}$ is horizontal and $p_{*}\tilde{X} = X$. A vector field $Y$ on $M_{2}$ is called \textit{basic} if $Y = \tilde{X}$ for some vector field $X$ on $M_{1}$.

The (unnormalized) \textit{mean curvature} of the fibers is defined by
\[
H(y) := \sum_{i=1}^{k} \alpha(X_{i},X_{i}),
\]
where $\alpha(\cdot, \cdot)$ is the second fundamental form of the fiber $F_{p(y)}$ and $\{X_{i}\}_{i=1}^{k}$ is an orthonormal basis of $(T_{y}M_{2})^{v}$. We say that the Riemannian submersion $p$ has \textit{minimal fibers} or \textit{fibers of basic mean curvature} if $H=0$ or $H$ is basic, respectively.

The \textit{lift} of a function $f \in C^{\infty}(M_{1})$ on $M_{2}$ is the smooth function $\tilde{f} := f \circ p$. The next lemma provides a simple expression for the Laplacian and the gradient of a lifted function.
\begin{lemma}\label{lift}
For any $f \in C^{\infty}(M_{1})$ and its lift $\tilde{f}$ on $M_{2}$, we have that:
\begin{enumerate}[topsep=0pt,itemsep=-1pt,partopsep=1ex,parsep=0.5ex,leftmargin=*, label=(\roman*), align=left, labelsep=0em]
\item $\grad \tilde{f} = \widetilde{\grad f}$,
\item $ \Delta \tilde{f} = \widetilde{\Delta f} + \langle\widetilde{\grad f} , H \rangle$.
\end{enumerate}
\end{lemma}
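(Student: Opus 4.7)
The plan is to handle the two parts separately, with part~(i) reducing to the isometry property of $p_{\ast}$ on horizontal spaces, and part~(ii) following from a local frame computation together with O'Neill's formulas.

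For part~(i), I would fix $y \in M_{2}$ and take an arbitrary $v \in T_{y}M_{2}$, decomposed as $v = v^{h} + v^{v}$. Since $p_{\ast y}$ annihilates vertical vectors, the chain rule gives $d\tilde{f}_{y}(v) = df_{p(y)}(p_{\ast}v^{h})$. Using that $p_{\ast}$ restricts to a linear isometry on the horizontal subspace, this equals $\langle \widetilde{\grad f}(y), v^{h}\rangle = \langle \widetilde{\grad f}(y), v\rangle$, where the last equality holds because $\widetilde{\grad f}$ is horizontal. Since $v$ was arbitrary, part~(i) follows.

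For part~(ii), with the analyst's sign convention $\Delta = -\diver\grad$, part~(i) reduces the claim to computing $\diver \tilde{X}$ for the basic field $\tilde{X} := \widetilde{\grad f}$. Around a point $y \in M_{2}$, I would choose a local orthonormal frame $\{\tilde{X}_{1},\dots,\tilde{X}_{n-k},V_{1},\dots,V_{k}\}$, where $\tilde{X}_{i}$ is the horizontal lift of a local orthonormal frame $\{X_{i}\}$ on $M_{1}$ around $p(y)$, and the $V_{j}$ are vertical. Then
\[
\diver \tilde{X} \;=\; \sum_{i=1}^{n-k}\langle \nabla_{\tilde{X}_{i}}\tilde{X},\tilde{X}_{i}\rangle \;+\; \sum_{j=1}^{k}\langle \nabla_{V_{j}}\tilde{X},V_{j}\rangle.
\]
By O'Neill's formula, the horizontal part of $\nabla_{\tilde{X}_{i}}\tilde{X}$ is the horizontal lift of $\nabla^{M_{1}}_{X_{i}}(\grad f)$, so the first sum equals $\widetilde{\diver \grad f} = -\widetilde{\Delta f}$.

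For the second sum, I would use that $\tilde{X}$ is horizontal and $V_{j}$ is vertical, so $\langle \tilde{X},V_{j}\rangle \equiv 0$; differentiating gives $\langle \nabla_{V_{j}}\tilde{X},V_{j}\rangle = -\langle \tilde{X},\nabla_{V_{j}}V_{j}\rangle$. Since $\tilde{X}$ is horizontal, only the horizontal part of $\nabla_{V_{j}}V_{j}$ contributes, and that horizontal part is exactly the second fundamental form $\alpha(V_{j},V_{j})$ of the fiber. Summing over $j$ yields $-\langle \tilde{X},H\rangle$. Combining,
\[
\Delta \tilde{f} \;=\; -\diver \tilde{X} \;=\; \widetilde{\Delta f} + \langle \widetilde{\grad f},H\rangle,
\]
as claimed. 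The only delicate step is keeping the sign convention for $\alpha$ and $\Delta$ consistent; once these are fixed, the computation is routine and no basicness or completeness assumption is needed.
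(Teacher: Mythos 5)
Your computation is correct and is essentially the standard elementary argument that the paper itself does not spell out but delegates to the cited reference (Bessa--Montenegro--Piccione, Subsection 2.2): part (i) via the chain rule and the isometry of $p_{*}$ on horizontal spaces, and part (ii) via an adapted orthonormal frame, O'Neill's identity for the horizontal part of $\nabla_{\tilde{X}_{i}}\tilde{X}$, and the identification of the horizontal part of $\nabla_{V_{j}}V_{j}$ with $\alpha(V_{j},V_{j})$, with signs consistent with the paper's conventions ($\Delta=-\diver\grad$ and $H=\sum_{j}\alpha(V_{j},V_{j})$). No gaps; your remark that neither basicness of $H$ nor completeness is needed is also accurate.
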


\begin{proof}
Both statements follow from elementary computations, which may be found for example in \cite[Subsection 2.2]{Bessa}. \qed
\end{proof}

\subsection{Lie groups}

In this subsection, we recall some basic definitions and results about Lie groups, and discuss some consequences of the Cheeger and Buser inequalities in this setting. 

For a Borel subset $A$ of a Riemannian manifold $(M,g)$ we denote the volume of $A$ by $|A|_{g}$, or simply by $|A|$ if the Riemannian metric of $M$ is clear from the context. Similarly, for an $m$-dimensional submanifold $N$ of $M$, we denote by $|N|$ the $m$-dimensional volume of $N$. The \textit{Cheeger constant} of a Riemannian manifold $M$ is defined by
\[
h(M) := \inf_{K} \frac{|\partial K|}{|K|},
\]
where the infimum is taken over all smoothly bounded, compact domains $K$ of $M$. It is related to the bottom of the spectrum via the Cheeger inequality (cf. \cite{Cheeger})
\[
\lambda_{0}(M) \geq \frac{h(M)^{2}}{4}.
\]
Buser \cite{Buser} established an inverse inequality for complete manifolds with Ricci curvature bounded from below. In particular, if $M$ is such a manifold, then $\lambda_{0}(M) = 0$ if and only if $h(M) = 0$. For our purposes, we also need the following lemma from his work, where $A^{r}$ stands for the $r$-tubular neighborhood of a subset $A$ of $M$.

\begin{lemma}[{Compare with \cite[Lemma 7.2]{Buser}; see also \cite[Corollary 6.3]{Mine}}]
Let $M$ be a non-compact, complete Riemannian manifold with Ricci curvature bounded from below. If $h(M) = 0$, then for any $\varepsilon, r > 0$, there exists an open, bounded $W \subset M$ such that
\[
| (\partial W)^{r}| < \varepsilon | W \smallsetminus (\partial W)^{r} |.
\]
\end{lemma}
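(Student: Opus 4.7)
The strategy is to combine the vanishing of the Cheeger constant with a Bishop--Gromov type volume estimate for parallel hypersurfaces, converting the smallness of the isoperimetric ratio $|\partial K|/|K|$ into the required smallness of $|(\partial W)^{r}|/|W \setminus (\partial W)^{r}|$. Since $h(M) = 0$, for each $\delta > 0$ one may pick a smoothly bounded, compact domain $K \subset M$ with $|\partial K| < \delta |K|$, and set $W := \inte K$; then $\partial W = \partial K$ and $|W| = |K|$.

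The key ingredient is a tubular neighborhood estimate: there is a constant $C(r) > 0$, depending only on $r$, $\dim M$, and the Ricci lower bound, such that every smoothly bounded compact domain $K \subset M$ satisfies
\[
|(\partial K)^{r}| \leq C(r)\,|\partial K|.
\]
To prove this, apply the coarea formula to the signed distance function $s := \pm\, d(\cdot, \partial K)$ (negative inside $K$, positive outside) to obtain
\[
|(\partial K)^{r}| = \int_{-r}^{r}\mathcal{H}^{n-1}(\{s = t\})\,dt,
\]
and control the area of each parallel hypersurface $\{s = t\}$ via a Jacobi-field Riccati comparison driven by the Ricci lower bound; this yields $\mathcal{H}^{n-1}(\{s = t\}) \leq C_{0}(r)\,|\partial K|$ for $|t| \leq r$. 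This is essentially the content of \cite[Lemma 7.2]{Buser}, with its adaptation to the present setting appearing in \cite[Corollary 6.3]{Mine}.

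Once this is in hand, choose $\delta$ so small that $(1+\varepsilon)\,C(r)\,\delta < \varepsilon$. Then
\[
|(\partial W)^{r}| = |(\partial K)^{r}| \leq C(r)\,|\partial K| < C(r)\,\delta\,|K| < \frac{\varepsilon}{1+\varepsilon}\,|W|,
\]
and since $|W| \leq |(\partial W)^{r}| + |W \setminus (\partial W)^{r}|$, rearranging gives $|(\partial W)^{r}| < \varepsilon\,|W \setminus (\partial W)^{r}|$, as required. The main obstacle is the hypersurface area estimate, which must remain valid across the cut locus of $\partial K$ and through focal points: one handles this by performing the Riccati comparison on the smooth part and observing that crossing focal points or the cut locus can only decrease the hypersurface area, so that the integrated bound survives without further regularity assumptions on $\partial K$.
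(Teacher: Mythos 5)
Your reduction of the problem to the claimed tube estimate is where the argument breaks down: the inequality $|(\partial K)^{r}| \leq C(r)\,|\partial K|$, with $C(r)$ depending only on $r$, $\dim M$ and a lower Ricci bound, is false. A Riccati/Jacobi comparison for the parallel hypersurfaces of $\partial K$ needs an \emph{initial condition}, namely a bound on the second fundamental form of $\partial K$; a lower bound on the ambient Ricci curvature alone (this is the content of Heintze--Karcher type estimates) does not control how the equidistants of an arbitrary hypersurface spread. Concretely, already in $\mathbb{R}^{n}$ take $K$ a ball of radius $\rho<r$: then $|\partial K|=\omega_{n-1}\rho^{n-1}\to 0$ as $\rho\to 0$, while $|(\partial K)^{r}|$ contains a ball of radius $r$ and stays bounded away from zero, so no constant $C(r)$ of the asserted form exists. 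The remark that crossing focal points or the cut locus "only decreases area" does not rescue this, since the problem occurs before any focal point: the Jacobian of the normal exponential map of $\partial K$ is simply not bounded in terms of Ricci alone.

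Moreover, the gap is not merely in the justification of the tube estimate but in the overall strategy of taking $W:=\operatorname{int}K$ for an arbitrary near-minimizer of the isoperimetric ratio. In $\mathbb{R}^{n}$, remove from a huge ball $B(R)$ a grid of tiny balls of radius $\rho$ spaced at distance $r$: the isoperimetric ratio of the resulting $K$ can be made arbitrarily small (the added boundary area is of order $(R/r)^{n}\rho^{n-1}$), yet $(\partial K)^{r}$ covers essentially all of $K$, so $|K\smallsetminus(\partial K)^{r}|$ is negligible and the desired inequality fails for this $W$. Hence one must \emph{choose} $W$ more carefully from the data $h(M)=0$, which is exactly what the cited argument of Buser (Lemma 7.2 of his paper, adapted in Corollary 6.3 of the author's covering paper) does, e.g.\ by passing to suitable parallel or level sets of a distance function selected by an averaging/coarea argument, rather than using the near-minimizing domain itself. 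Note also that the paper under review does not reprove the lemma; it quotes it from those references, so your task was to reconstruct that selection argument, and the present proposal does not yet contain it.
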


Throughout this paper Lie groups are assumed to be non-discrete and possibly non-connected, unless otherwise stated.
A possibly discrete Lie group $G$ is called \textit{amenable} if there exists a left-invariant mean on $L^{\infty}(G)$; that is, a linear functional $\mu \colon L^{\infty}(G) \to \mathbb{R}$ such that
\[
\ess\inf f \leq \mu(f) \leq \ess \sup f \text{ and } \mu(f \circ L_{x}) = \mu(f),
\]
for any $f \in L^{\infty}(G)$ and $x\in G$, where $L_{x}$ stands for multiplication from the left with an element $x \in G$. Here, $L^{\infty}(G)$ is considered with respect to the Haar measure. If $G$ is non-discrete, then its Haar measure is a constant multiple of the volume element of $G$ induced from a left-invariant metric. If $G$ is discrete, then its Haar measure is a constant multiple of the counting measure. For more details, see \cite{MR0251549}.

\begin{lemma}\label{short exact}
If $N$ is a normal subgroup of a possibly discrete Lie group $G$, then $G$ is amenable if and only if $N$ and $G/N$ are amenable.
\end{lemma}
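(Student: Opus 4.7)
The plan is to deduce this from classical amenability theory for locally compact groups, observing that any Lie group (discrete or not) is locally compact Hausdorff. In the non-discrete setting we interpret ``normal subgroup'' as closed normal subgroup, so that $G/N$ carries the Lie group structure making $\pi \colon G \to G/N$ a principal $N$-bundle.

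For the forward direction I would establish two standard facts. First, amenability passes to quotients: given a left-invariant mean $\mu$ on $L^{\infty}(G)$, the formula $\bar{\mu}(f) := \mu(f \circ \pi)$ yields a left-invariant mean on $L^{\infty}(G/N)$, since $\pi$ intertwines the left action of $G$ on itself with the left action of $G/N$ on the quotient. Second, amenability passes to closed subgroups: this is done by fixing a Borel section $s \colon G/N \to G$ of $\pi$ (which exists since $G/N$ is second countable) and using the resulting Borel identification $G \cong N \times (G/N)$ to push a mean on $G$ forward to a mean on $N$.

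For the converse, given left-invariant means $\mu_{N}$ on $L^{\infty}(N)$ and $\mu_{G/N}$ on $L^{\infty}(G/N)$, I would construct a mean on $L^{\infty}(G)$ by iterated integration. For $f \in L^{\infty}(G)$, the bounded function $n \mapsto f(xn)$ on $N$ has a mean $F(x) := \mu_{N}(n \mapsto f(xn))$; left-invariance of $\mu_{N}$ implies $F(xm) = F(x)$ for every $m \in N$, so $F$ descends to a bounded function $\bar{F}$ on $G/N$, and one sets $\mu(f) := \mu_{G/N}(\bar{F})$. Positivity and normalization are immediate, and left-invariance under $G$ follows from the identity $\overline{F_{L_{g}f}} = L_{gN}\bar{F}_{f}$ together with left-invariance of $\mu_{G/N}$.

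The main obstacle is not conceptual but technical: in the non-discrete case one must verify that $x \mapsto \mu_{N}(n \mapsto f(xn))$ is Borel measurable so that $\mu_{G/N}$ can be evaluated on it, which is where the existence of a Borel section of $\pi$ is crucial, as is a careful approximation of $\mu_{N}$ by integrals against nets of compactly supported functions in order to reduce measurability questions to Fubini-type statements. All these measurability arguments, along with both implications sketched above, are carried out in detail in Greenleaf's monograph \cite{MR0251549}, so here I would content myself with citing the relevant results and verifying that the hypotheses apply in our Lie-group setting.
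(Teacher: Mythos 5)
Your proposal is correct and matches the paper's treatment: the paper states this lemma without proof as a standard fact of amenability theory, pointing to Greenleaf's monograph \cite{MR0251549}, which is exactly the source you invoke for the hereditary properties (quotients, closed subgroups, extensions) and the measurability technicalities. Your sketch of those classical arguments is accurate, so there is no substantive difference in approach.
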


It is not hard to verify that abelian and compact Lie groups are amenable. Therefore, so are compact extensions of solvable groups. As a matter of fact, a connected Lie group is amenable if and only if it is a compact extension of a solvable group (cf. for example \cite[Lemma 2.2]{MR454886}).

Let $G$ be a connected Lie group with Lie algebra $\mathfrak{g}$. The \textit{radical} $\mathfrak{s}$ of $\mathfrak{g}$ is the largest solvable ideal of $\mathfrak{g}$. The \textit{radical} $S$ of $G$ is the connected subgroup with Lie algebra $\mathfrak{s}$. Then $S$ is a closed, normal subgroup of $G$ and the quotient $G/S$ is semisimple. In this case, we have that $G$ is amenable if and only if $G/S$ is compact (cf. \cite[p. 724f]{Hoke} and the references therein).

A Lie group is called \textit{unimodular} if its Haar measure is also right-invariant. For a connected Lie group, this property may be reformulated in terms of its Lie algebra as follows.

\begin{lemma}[{\cite[Proposition 1.2]{Hoke}}]\label{unimodular}
A connected Lie group $G$ is unimodular if and only if $\tr (\ad X) = 0$ for any $X$ in the Lie algebra of $G$.
\end{lemma}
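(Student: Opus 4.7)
The plan is to reduce the lemma to two standard ingredients: the identification of the modular function of $G$ with the modulus of the adjoint determinant, and the classical exponential--trace formula.

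Recall that the modular function $\Delta_G \colon G \to (0,\infty)$ is the unique continuous homomorphism through which right translations act on a left Haar measure; by definition $G$ is unimodular precisely when $\Delta_G \equiv 1$. The first step is to establish the formula $\Delta_G(g) = |\det \mathrm{Ad}(g^{-1})|$. I would obtain this by extending a nonzero element of $\Lambda^{\dim \mathfrak{g}} \mathfrak{g}^{*}$ to a nowhere vanishing left-invariant top form $\omega$ on $G$ (whose induced measure is a left Haar measure), and noting that, since conjugation $C_g = L_g \circ R_{g^{-1}}$ fixes the identity with differential $\mathrm{Ad}(g)$ there, the pullback $C_g^{*}\omega$ equals $\det \mathrm{Ad}(g)\,\omega$ at $e$ and hence everywhere by left-invariance. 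Comparing with the definition of $\Delta_G$ and using $L_g^{*}\omega = \omega$ then gives the desired expression.

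The second step combines the naturality of the exponential map, $\mathrm{Ad} \circ \expo_{G} = \expo_{GL(\mathfrak{g})} \circ \ad$, with the elementary identity $\det(\expo A) = \exp(\tr A)$ valid for any $A \in \End(\mathfrak{g})$, to obtain
\[
\det \mathrm{Ad}(\expo X) = \exp(\tr \ad X) \qquad \text{for every } X \in \mathfrak{g}.
\]
Therefore $\Delta_G \equiv 1$ on the image of $\expo$ if and only if $\tr(\ad X) = 0$ for every $X \in \mathfrak{g}$.

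Finally, connectedness of $G$ ensures that $\expo(\mathfrak{g})$ contains a neighborhood of the identity and thus generates $G$; since $\Delta_G$ is a homomorphism, $\Delta_G \equiv 1$ globally is equivalent to $\Delta_G \equiv 1$ on this generating set, which by the previous step is equivalent to $\tr(\ad X) = 0$ for all $X \in \mathfrak{g}$. The only conceptually nontrivial step is the first one, the identification of $\Delta_G$ with $|\det \mathrm{Ad}(\cdot^{-1})|$; the remainder is purely linear-algebraic. Since this is exactly the content of \cite[Proposition 1.2]{Hoke} already cited in the statement, in practice I would simply refer the reader to that source rather than reproduce the computation.
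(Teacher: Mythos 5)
Your argument is correct, and it matches the paper's treatment: the paper does not prove this lemma at all but simply quotes it from Hoke's Proposition 1.2, exactly as you propose to do in the end. The sketch you give (identifying the modular function with $|\det \mathrm{Ad}(\cdot^{-1})|$ via a left-invariant top form, using $\det(\expo A)=\exp(\tr A)$ together with $\mathrm{Ad}\circ\expo=\expo\circ\ad$, and invoking connectedness so that the exponential image generates $G$) is the standard proof of the cited result and is sound.
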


It is worth to point out that connected, nilpotent Lie groups are unimodular and amenable. In addition, compact extensions of connected, unimodular Lie groups are unimodular (cf. \cite[Proposition 8]{MR330345}). 

Although the aforementioned properties are group theoretic, they are characterized by the spectrum of the Lie group according to the next theorem, which is well known for simply connected Lie groups.

\begin{theorem}\label{unim and amen}
A connected Lie group $G$ is unimodular and amenable if and only if $\lambda_{0}(G) = 0$ for some/any left-invariant metric on $G$.
\end{theorem}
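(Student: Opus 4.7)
The plan is to reduce to the corresponding statement for simply connected Lie groups, which the excerpt treats as known. Fix a left-invariant metric $g$ on $G$ and let $\pi \colon \tilde{G} \to G$ be the universal covering homomorphism. The pulled-back metric $\tilde{g} = \pi^{*}g$ is left-invariant on $\tilde{G}$ (since $\pi \circ L_{\tilde h} = L_{\pi(\tilde h)} \circ \pi$ for every $\tilde h \in \tilde G$), so $\pi$ is a local isometry, and hence a normal Riemannian covering whose deck transformation group is $\pi_{1}(G)$.

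The key observation is that $\pi_{1}(G)$ is abelian (as holds for the fundamental group of any connected topological group, by the Eckmann-Hilton argument), and therefore amenable as a discrete group. Consequently, the result of \cite{BMP1} recalled in the introduction applies to the covering $\pi$ and yields $\lambda_{0}(\tilde{G}) = \lambda_{0}(G)$. This reduces the spectral side of the equivalence to the simply connected group $\tilde{G}$.

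On the group-theoretic side, $G$ and $\tilde{G}$ share the Lie algebra $\mathfrak{g}$, so by Lemma \ref{unimodular} one is unimodular if and only if the other is. Applying Lemma \ref{short exact} to the short exact sequence $1 \to \pi_{1}(G) \to \tilde{G} \to G \to 1$, together with the amenability of $\pi_{1}(G)$, gives that $G$ is amenable if and only if $\tilde{G}$ is. Chaining these equivalences with the known simply connected case yields: $G$ is unimodular and amenable iff $\tilde{G}$ is iff $\lambda_{0}(\tilde{G}) = 0$ iff $\lambda_{0}(G) = 0$. Since the group-theoretic conditions do not depend on the chosen left-invariant metric, the equivalence also justifies the ``some/any'' formulation: $\lambda_{0}(G)=0$ for one such metric forces the group-theoretic conclusion, which in turn forces $\lambda_{0}(G)=0$ for every left-invariant metric. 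The only genuine obstacle in this approach is the reliance on the simply connected case; granted that, the argument is a purely formal packaging of the two lemmas with the equality of bottoms of spectra under amenable coverings.
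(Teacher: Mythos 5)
Your argument is correct and is essentially the same as the paper's: reduce to the universal cover $\tilde{G}$, use that $\pi_{1}(G)$ is abelian hence amenable to get $\lambda_{0}(\tilde{G})=\lambda_{0}(G)$ from \cite[Theorem 1.2]{BMP1}, transfer unimodularity via Lemma \ref{unimodular} (same Lie algebra) and amenability via Lemma \ref{short exact}. The only difference is that you black-box the simply connected case, whereas the paper derives it from Hoke's characterization $h(\tilde{G})=0$ \cite[Theorem 3.8]{Hoke} together with the Cheeger and Buser inequalities (Buser being applicable because a left-invariant metric is homogeneous, so its Ricci curvature is bounded below); citing it as ``well known'' is consistent with the paper's own framing, so this is not a genuine gap.
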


\begin{proof}
We know from \cite[Theorem 3.8]{Hoke} that a simply connected Lie group $\tilde{G}$ is unimodular and amenable if and only if $h(\tilde{G}) = 0$ with respect to some/any left-invariant metric. By the Cheeger and Buser inequalities, this gives the assertion for simply connected Lie groups. To show its validity for a connected Lie group $G$, let $q \colon \tilde{G} \to G$ be the universal covering of $G$. It follows from Lemma \ref{unimodular} that $\tilde{G}$ is unimodular if and only if $G$ is unimodular, since their Lie algebras are isomorphic. Furthermore, $\pi_{1}(G)$ is abelian and isomorphic to the kernel of $q$ as a Lie group homomorphism. Therefore, $\tilde{G}$ is an extension of $G$ by an amenable group, and Lemma \ref{short exact} yields that $\tilde{G}$ is amenable if and only if $G$ is amenable. Taking into account that $\pi_{1}(G)$ is amenable, we conclude from \cite[Theorem 1.2]{BMP1} that $\lambda_{0}(\tilde{G}) = \lambda_{0}(G)$. \qed 
\end{proof}\medskip

By virtue of Buser's lemma, we derive the following consequence of the preceding characterization.

\begin{corollary}\label{unim and amen domain}
Let $G$ be a non-compact, connected, unimodular and amenable Lie group endowed with a left-invariant metric. Then for any $\varepsilon, r > 0$, there exists an open, bounded $W \subset G$ such that
\[
| (\partial W)^{r}| < \varepsilon | W \smallsetminus (\partial W)^{r} |.
\]
\end{corollary}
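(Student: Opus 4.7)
The plan is to combine the spectral characterization of unimodularity plus amenability given by Theorem \ref{unim and amen} with the Cheeger/Buser machinery that is already cited in the excerpt. First I would record the two standard facts about a connected Lie group $G$ equipped with a left-invariant metric that make Buser's results available: such a metric is complete (left translations are isometries, so geodesics can be extended globally by translating), and its Ricci curvature is bounded from below (in fact it is constant on left-translates of a point, so it takes only finitely many values of the Ricci quadratic form on the compact unit tangent sphere at $e$, and everything is propagated by isometries). These are routine observations but they are precisely what is needed to invoke Buser's work.

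With these in place, the core of the argument is a two-step translation: spectrum $\to$ Cheeger constant $\to$ F\o lner-type domain. By Theorem \ref{unim and amen}, the hypothesis that $G$ is connected, unimodular and amenable gives $\lambda_{0}(G)=0$. Since $G$ is complete with Ricci bounded below, Buser's inequality (mentioned in the excerpt as the inverse of Cheeger's inequality in this setting) forces $h(G)=0$. Now $G$ is non-compact, complete, and has Ricci bounded from below, so the hypotheses of the Buser lemma quoted just before the subsection on Lie groups are satisfied. Applying that lemma with the prescribed $\varepsilon,r>0$ produces the desired open bounded $W\subset G$ with $|(\partial W)^{r}|<\varepsilon\,|W\smallsetminus(\partial W)^{r}|$.

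There is essentially no serious obstacle here: once Theorem \ref{unim and amen} is in hand, the corollary is a direct transcription through Buser's inequality and Buser's lemma. The only point that deserves a sentence in the write-up is verifying completeness and the lower Ricci bound for a left-invariant metric, since without them neither Buser's inequality nor his F\o lner-type lemma would apply.
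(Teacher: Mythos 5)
Your proposal is correct and follows essentially the same route the paper intends: Theorem \ref{unim and amen} gives $\lambda_{0}(G)=0$, Buser's inequality (applicable since a left-invariant metric is homogeneous, hence complete with Ricci curvature bounded below) yields $h(G)=0$, and the quoted Buser lemma then produces the domain $W$. Your extra care in verifying completeness and the lower Ricci bound is exactly the right point to make explicit.
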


\section{Submersions with fibers of basic mean curvature}\label{sec basic mean}

The aim of this section is to prove Theorem \ref{basic mean curv thm}. Let $p \colon M_{2} \to M_{1}$ be a Riemannian submersion with fibers of basic mean curvature, and consider the Schr\"{o}dinger operator
\[
S = \Delta + \frac{1}{4} \| p_{*}H \|^{2} - \frac{1}{2} \diver p_{*}H
\]
on $M_{1}$. As in \cite{Bessa, Bordoni, Mine4}, the \textit{average} of a function $f \in C^{\infty}_{c}(M_{2})$ is the smooth function
\[
f_{\av}(x) := \int_{F_{x}} f
\]
on $M_{1}$ with gradient given by
\begin{equation}\label{grad of average}
\langle \grad f_{\av}(x)  , X \rangle= \int_{F_{x}} \langle \grad f - f H , \tilde{X} \rangle
\end{equation}
for any $x \in M_{1}$ and $X \in T_{x}M_{1}$, where $\tilde{X}$ is the horizontal lift of $X$ on $F_{x}$. The \textit{pushdown} of $f$ is the function
\[
h(x) := \sqrt{(f^{2})_{\av}(x)} = \left( \int_{F_{x}} f^{2} \right)^{1/2}
\]
on $M_{1}$. Then \cite[Lemma 3.1]{Mine4} states that $h \in \Lip_{c}(M_{1})$. Hence, its gradient is defined almost everywhere and vanishes (if defined) in points where $h$ is zero.

\begin{proposition}\label{rayleigh pushdown}
Let $h \in \Lip_{c}(M_{1})$ be the pushdown of a function $f \in C^{\infty}_{c}(M_{2})$ with $\| f \|_{L^{2}(M_{2})} = 1$. Then their Rayleigh quotients are related by
\[
\mathcal{R}(f) \geq \mathcal{R}_{S}(h) + \int_{M_{1}} \lambda_{0}(F_{x}) h^{2}(x) dx.
\]
\end{proposition}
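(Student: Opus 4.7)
\medskip

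\noindent\textbf{Proof plan.} Normalize so that $\|f\|_{L^{2}(M_{2})}=1$; then $\int_{M_{1}} h^{2}=1$ by Fubini for Riemannian submersions, and the inequality reduces to
\[
\int_{M_{2}} \|\grad f\|^{2} \;\geq\; \int_{M_{1}} \Big( \|\grad h\|^{2} + \tfrac{1}{4}\|p_{*}H\|^{2}h^{2} - \tfrac{1}{2}(\diver p_{*}H)\,h^{2}\Big) + \int_{M_{1}} \lambda_{0}(F_{x})\,h^{2}(x)\,dx.
\]
The plan is to split $\grad f = (\grad f)^{h} + (\grad f)^{v}$ and estimate each summand fiberwise.

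\medskip

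\noindent\textbf{Vertical part.} For each $x \in M_{1}$, the restriction of $(\grad f)^{v}$ to $F_{x}$ is the intrinsic gradient of $f|_{F_{x}}$. Applying the variational characterization of $\lambda_{0}(F_{x})$ from Proposition \ref{bottom} to $f|_{F_{x}} \in C^{\infty}_{c}(F_{x})$ (possibly zero, in which case the bound is trivial) gives
\[
\int_{F_{x}} \|(\grad f)^{v}\|^{2} \;\geq\; \lambda_{0}(F_{x})\int_{F_{x}} f^{2} = \lambda_{0}(F_{x})\,h^{2}(x),
\]
and integrating over $M_{1}$ (by Fubini for Riemannian submersions) yields the $\lambda_{0}(F_{x})$-term.

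\medskip

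\noindent\textbf{Horizontal part.} For any $X \in T_{x}M_{1}$, apply \eqref{grad of average} to $f^{2}$ and use that $H$ is basic, so $\langle H,\tilde X\rangle = \langle p_{*}H, X\rangle$ is constant along the fiber:
\[
\langle h(x)\grad h(x), X\rangle \;=\; \tfrac{1}{2}\langle \grad h^{2}(x), X\rangle \;=\; \int_{F_{x}} f \langle \grad f, \tilde X\rangle \;-\; \tfrac{1}{2}\langle p_{*}H, X\rangle\,h^{2}(x).
\]
Since $\langle \grad f, \tilde X\rangle = \langle (\grad f)^{h}, \tilde X\rangle$, the Cauchy--Schwarz inequality on $F_{x}$ (with weight $f$) yields
\[
\left(\langle h\grad h, X\rangle + \tfrac{1}{2}\langle p_{*}H, X\rangle\,h^{2}\right)^{2} \;\leq\; h^{2}(x)\int_{F_{x}} \langle \grad f, \tilde X\rangle^{2}.
\]
Dividing by $h^{2}(x)$ (on $\{h>0\}$; on $\{h=0\}$ both sides vanish a.e.\ since $\grad h=0$ a.e.\ there) and summing over $X$ running through an orthonormal basis of $T_{x}M_{1}$, I obtain
\[
\int_{F_{x}} \|(\grad f)^{h}\|^{2} \;\geq\; \Big\| \grad h + \tfrac{h}{2}p_{*}H\Big\|^{2} = \|\grad h\|^{2} + h\langle \grad h, p_{*}H\rangle + \tfrac{h^{2}}{4}\|p_{*}H\|^{2}.
\]

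\medskip

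\noindent\textbf{Putting it together.} Integrating the last inequality over $M_{1}$ and converting the cross-term via integration by parts on $M_{1}$,
\[
\int_{M_{1}} h\langle \grad h, p_{*}H\rangle = \tfrac{1}{2}\int_{M_{1}} \langle \grad h^{2}, p_{*}H\rangle = -\tfrac{1}{2}\int_{M_{1}} h^{2}\diver p_{*}H,
\]
(valid because $h^{2} \in \Lip_{c}(M_{1})$ and $p_{*}H$ is smooth), I recover exactly the potential of $S$. Adding the vertical estimate completes the proof.

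\medskip

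\noindent\textbf{Main obstacle.} The crucial step is the Cauchy--Schwarz bound on the fibers combined with the identity \eqref{grad of average}; it is here that the hypothesis of basic mean curvature is essential, as it lets $\langle p_{*}H,X\rangle$ be pulled outside the fiberwise integral. A secondary technical point is the integration by parts, which requires that one handles the Lipschitz (rather than smooth) regularity of $h$ and the measure-zero set $\{h=0\}$ where $\grad h$ need not exist.
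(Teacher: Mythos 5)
Your proposal is correct and follows essentially the same route as the paper: it uses formula \eqref{grad of average} applied to $f^{2}$ together with the basic mean curvature hypothesis, a fiberwise Cauchy--Schwarz bound giving $\int_{F_{x}} \| (\grad f)^{h} \|^{2} \geq \| \grad h + \tfrac{h}{2} p_{*}H \|^{2}$, the divergence formula on $M_{1}$ to produce the $-\tfrac{1}{2}\diver p_{*}H$ term, and the variational characterization of $\lambda_{0}(F_{x})$ for the vertical part. The only difference is cosmetic (you estimate pointwise and then integrate, while the paper expands $\mathcal{R}_{S}(h)$ first), so no further comment is needed.
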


\begin{proof}
For any $x \in M_{1}$ with $h(x) > 0$, we readily see from formula (\ref{grad of average}) that
\[
\grad h(x) =\frac{1}{2h(x)} \int_{F_{x}} (p_{*} \grad f^{2} - f^{2} p_{*}H) = \frac{1}{h(x)} \int_{F_{x}} f p_{*} \grad f - \frac{1}{2} h(x) p_{*}H(x).
\]
In view of this, the fact that $\| h \|_{L^{2}(M_{1})} = 1$, the divergence formula, the Cauchy-Schwarz inequality and that
\[
\frac{1}{2} \langle \grad h^{2}(x) , p_{*}H(x) \rangle = h(x) \langle \grad h(x) ,p_{*}H(x) \rangle = \int_{F_{x}} f \langle \grad f, H \rangle - \frac{1}{2} h^{2}(x) \| p_{*}H(x) \|^{2}
\]
for any $x \in M_{1}$, we compute
\begin{eqnarray}\label{pushdown}
\mathcal{R}_{S}(h) &=& \int_{M_{1}} \big( \| \grad h \|^{2} +\frac{1}{4} \| p_{*}H \|^{2} h^{2} - \frac{1}{2} h^{2} \diver p_{*}H \big) \nonumber \\
&=& \int_{M_{1}} \bigg( \frac{1}{h^{2}} \Big\| \int_{F_{x}} f p_{*} \grad f  \Big\|^{2}  + \frac{1}{4} h^{2} \| p_{*}H \|^{2} - \int_{F_{x}} f \langle \grad f , H \rangle  \bigg) \nonumber \\
&+& \int_{M_{1}} \bigg( \frac{1}{4} h^{2} \| p_{*}H \|^{2}  + \frac{1}{2} \langle \grad h^{2} , p_{*}H \rangle \bigg) \nonumber \\
&\leq& \int_{M_{1}} \int_{F_{x}} \| (\grad f)^{h} \|^{2} = \int_{M_{2}} \| (\grad f)^{h} \|^{2}.
\end{eqnarray}

Since at any point of $M_{2}$ the tangent space of $M_{2}$ splits into the orthogonal sum of the horizontal and the vertical space, it is easily checked that (cf. \cite[Formula (6)]{Mine4})
\[
\mathcal{R}(f) = \int_{M_{2}} \| (\grad f)^{h} \|^{2} + \int_{M_{2}} \| (\grad f)^{v} \|^{2} \geq \int_{M_{2}} \| (\grad f)^{h} \|^{2} + \int_{M_{1}} \lambda_{0}(F_{x}) h^{2}(x).
\]
The proof is now completed by formula (\ref{pushdown}) and Proposition \ref{bottom}. \qed
\end{proof}\medskip

\noindent{\emph{Proof of Theorem \ref{basic mean curv thm}}:} From Propositions \ref{bottom} and \ref{rayleigh pushdown}, it is immediate verify the asserted inequality. Suppose now that the equality holds. Then there exists $(f_{n})_{n \in \mathbb{N}} \subset C^{\infty}_{c}(M_{2})$ with $\| f_{n} \|_{L^{2}(M_{2})} = 1$ and $\mathcal{R}(f_{n}) \rightarrow \lambda_{0}(M_{2})$, as follows from Proposition \ref{bottom}. Denote by $h_{n} \in \Lip_{c}(M_{1})$ the pushdown of $f_{n}$, $n \in \mathbb{N}$. Arguing as in the proof of \cite[Theorem 1.1]{Mine4}, using Proposition \ref{rayleigh pushdown} instead of \cite[Proposition 3.2]{Mine4}, we obtain that
\begin{equation}\label{minimizing pushdowns}
\mathcal{R}_{S}(h_{n}) \rightarrow \lambda_{0}(S) \text{ and } \int_{M_{1}} (\lambda_{0}(F_{x}) - \inf_{y \in M_{1}} \lambda_{0}(F_{y})) h_{n}^{2}(x)dx \rightarrow 0.
\end{equation}
Since $\lambda_{0}(S) \notin \sigma_{\ess}(S)$, we deduce from \cite[Proposition 3.5]{Mine2} that after passing to a subsequence if necessary, we may assume that $h_{n} \rightarrow \varphi$ in $L^{2}(M_{1})$ for some $\varphi \in C^{\infty}(M_{1})$ with $S \varphi = \lambda_{0}(S) \varphi$. Then $\varphi$ is positive, by Proposition \cite[Proposition 3.7]{Mine2}. Arguing as in the proof of \cite[Theorem 1.1]{Mine4}, we conclude from (\ref{minimizing pushdowns}) that
\[
\lambda_{0}(F_{x}) = \inf_{y \in M_{1}} \lambda_{0}(F_{y})
\]
for almost any $x \in M_{1}$.\qed \medskip

\noindent\emph{Proof of Corollary \ref{closed basic}:} If the submersion has closed fibers of basic mean curvature, then $S$ is written as
\[
S = \Delta - \frac{\Delta \sqrt{V}}{\sqrt{V}},
\]
where $V(x)$ is the volume of $F_{x}$ (cf. \cite[Section 4]{Mine4}). Thus, we may consider the renormalization $L$ of $S$ with respect to $\sqrt{V}$. Then Lemmas \ref{renormalized comp} and \ref{lift} imply that for any non-zero $f \in C^{\infty}_{c}(M_{1})$, its lift $\tilde{f} \in C^{\infty}_{c}(M_{2})$ satisfies
\begin{equation}\label{Rayleigh lift}
\mathcal{R}(\tilde{f}) = \frac{\int_{M_{2}} \| \grad \tilde{f} \|^{2}}{\int_{M_{2}} \tilde{f}^{2}} = \frac{\int_{M_{1}} \| \grad f \|^{2} V}{\int_{M_{1}} f^{2}V} = \mathcal{R}_{L}(f) = \mathcal{R}_{S}(f\sqrt{V}).
\end{equation}
We derive from Proposition \ref{bottom} that $\lambda_{0}(M_{2}) \leq \lambda_{0}(S)$, while the inverse inequality is a consequence of Theorem \ref{basic mean curv thm}.

About the second statement, choose an exhausting sequence $(K_{n})_{n \in \mathbb{N}}$ of $M_{1}$ consisting of smoothly bounded, compact domains. Then $(p^{-1}(K_{n}))_{n \in \mathbb{N}}$ is an exhausting sequence of $M_{2}$ consisting of smoothly bounded, compact domains, because the submersion has closed fibers. Applying Theorem \ref{basic mean curv thm} to the restriction of $p \colon M_{2} \smallsetminus p^{-1}(K_{n}) \to M_{1} \smallsetminus K_{n}$ over any connected component of $M_{1} \smallsetminus K_{n}$, together with Proposition \ref{bottom essential spectrum}, gives the estimate
\[
\lambda_{0}^{\ess}(M_{2}) = \lim_{n} \lambda_{0}(M_{2} \smallsetminus p^{-1}(K_{n})) \geq \lim_{n} \lambda_{0}(S, M_{1} \smallsetminus K_{n}) = \lambda_{0}^{\ess}(S). 
\]
From Proposition \ref{bottom essential spectrum}, there exists a sequence $(f_{n})_{n \in \mathbb{N}} \subset C^{\infty}_{c}(M_{1}) \smallsetminus \{0\}$ with $\supp f_{n}$ pairwise disjoint, such that $\mathcal{R}_{S}(f_{n}) \rightarrow \lambda_{0}^{\ess}(S)$. It is clear that the lifts $\tilde{h}_{n}$ of $h_{n} := f_{n} / \sqrt{V}$ also have pairwise disjoint supports. Then Proposition \ref{bottom essential spectrum} and formula (\ref{Rayleigh lift}) yield that
\[
\lambda_{0}^{\ess}(M_{2}) \leq \liminf_{n} \mathcal{R}(\tilde{h}_{n}) = \liminf_{n} \mathcal{R}_{S}(f_{n}) = \lambda_{0}^{\ess}(S),
\]
as we wished. \qed \medskip

It should be noticed that if the submersion has minimal fibers, then $S$ coincides with the Laplacian on $M_{1}$. Therefore, \cite[Example 3.3]{Mine4} is an example of a Riemannian submersion $p \colon M_{2} \to M_{1}$ with minimal fibers, where $M_{1}$ is closed and $M_{2}$ is complete, such that
\[
0 = \lambda_{0}(M_{2}) = \lambda_{0}(S) + \inf_{x \in F_{x}} \lambda_{0}(F_{x}) 
\]
and there is $x \in M_{1}$ with $\lambda_{0}(F_{x}) > 0$. Since $M_{1}$ is closed, it is evident that $\lambda_{0}(S) \notin \sigma_{\ess}(S)$. Hence, in general, the asserted equality in the second part of Theorem \ref{basic mean curv thm} holds almost everywhere, but not everywhere.

According to the next lemma, the Schr\"{o}dinger operator $S$ defined in (\ref{operator}) is always non-negative definite. Moreover, it demonstrates that Theorem \ref{basic mean curv thm} provides a sharper lower bound for $\lambda_{0}(M_{2})$ than \cite[Theorem 1.1]{Mine4} in the case where both of them are applicable.

\begin{lemma}\label{non-negative def}
Let $X$ be a smooth vector field on a Riemannian manifold $M$. Then the operator
\[
S = \Delta + \frac{1}{4} \| X \|^{2} - \frac{1}{2} \diver X
\]
is non-negative definite. Furthermore, if $\| X \| \leq C \leq 2 \sqrt{\lambda_{0}(M)}$, then
\[
\lambda_{0}(S) \geq (\sqrt{\lambda_{0}(M)} - C/2)^{2}.
\]
\end{lemma}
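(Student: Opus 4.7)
The plan is to observe that the quadratic form of $S$ on $C^{\infty}_{c}(M)$ is literally a pointwise perfect square, from which both claims fall out. For any $f \in C^{\infty}_{c}(M)$, an integration by parts against the divergence term gives
\[
\int_{M} (\diver X) f^{2} = -\int_{M} \langle X, \grad f^{2} \rangle = -2 \int_{M} f \langle X, \grad f \rangle,
\]
so combining with $\langle \Delta f, f \rangle_{L^{2}(M)} = \int_{M} \| \grad f \|^{2}$ one finds
\[
\langle Sf, f \rangle_{L^{2}(M)} = \int_{M} \bigl( \| \grad f \|^{2} + f \langle X, \grad f \rangle + \tfrac{1}{4} \| X \|^{2} f^{2} \bigr) = \int_{M} \bigl\| \grad f + \tfrac{1}{2} fX \bigr\|^{2}.
\]
This makes the non-negativity of $\langle Sf, f \rangle_{L^{2}(M)}$ manifest, and hence $\lambda_{0}(S) \geq 0$ by Proposition \ref{bottom}.

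For the second assertion, I would apply the Minkowski inequality in $L^{2}(M)$ to the perfect square to get
\[
\bigl\| \grad f + \tfrac{1}{2} fX \bigr\|_{L^{2}(M)} \geq \| \grad f \|_{L^{2}(M)} - \tfrac{1}{2} \| fX \|_{L^{2}(M)}.
\]
By Proposition \ref{bottom}, $\| \grad f \|_{L^{2}(M)}^{2} \geq \lambda_{0}(M) \| f \|_{L^{2}(M)}^{2}$, while the pointwise estimate $\|X\| \leq C$ yields $\| fX \|_{L^{2}(M)} \leq C \| f \|_{L^{2}(M)}$. The hypothesis $C \leq 2\sqrt{\lambda_{0}(M)}$ ensures that the right-hand side above is non-negative, so squaring and dividing by $\| f \|_{L^{2}(M)}^{2}$ gives
\[
\mathcal{R}_{S}(f) \geq \bigl( \sqrt{\lambda_{0}(M)} - C/2 \bigr)^{2}.
\]
Taking the infimum over non-zero $f \in C^{\infty}_{c}(M)$ and invoking Proposition \ref{bottom} once more finishes the argument.

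There is essentially no obstacle here: the proof is a one-line integration by parts that completes the square, followed by a Minkowski inequality. The only bookkeeping point to watch is the sign produced by the integration by parts, which has to match the cross term in the expansion of $\| \grad f + \tfrac{1}{2} fX \|^{2}$; getting this right is what forces the particular combination of coefficients $\tfrac{1}{4}\|X\|^{2}$ and $-\tfrac{1}{2}\diver X$ in the definition of $S$.
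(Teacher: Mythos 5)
Your proof is correct and takes essentially the same approach as the paper: the divergence formula turns the quadratic form of $S$ into the perfect square $\int_{M} \| \grad f + \tfrac{1}{2} f X \|^{2}$, and the lower bound then follows from a triangle-type inequality together with the variational characterization of $\lambda_{0}(M)$. The only cosmetic difference is that the paper performs the reverse triangle inequality pointwise and then applies Cauchy--Schwarz to the cross term, which is exactly the $L^{2}$ Minkowski inequality you invoke directly.
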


\begin{proof}
For any $f \in C^{\infty}_{c}(M)$ with $\| f \|_{L^{2}(M)} = 1$, observe that its Rayleigh quotient is given by
\begin{equation}\label{special Rayleigh}
\mathcal{R}_{S}(f) = \int_{M} \big( \| \grad f \|^{2} + \frac{1}{4} \| X \|^{2} f^{2} + \langle \grad f , f X  \rangle \big) = \int_{M} \big\| \grad f + \frac{f}{2} X \big\|^{2},
\end{equation}
where we used the divergence formula. From Proposition \ref{bottom}, we readily see that $S$ is non-negative definite.
	
Suppose now that $\| X \| \leq C \leq 2 \sqrt{\lambda_{0}(M)}$ and let $f \in C^{\infty}_{c}(M)$ with $\| f \|_{L^{2}(M)} =1$. An elementary calculation shows that
\begin{eqnarray}\label{estimate}
\mathcal{R}_{S}(f) &\geq&  \int_{M} \big(\| \grad f \| - \frac{|f|}{2}\| X \|\big)^{2} \nonumber \\
&=& \int_{M} \big( \| \grad f \|^{2} + \frac{f^{2}}{4}\| X \|^{2} - \| \grad f \| |f| \|X\| \big) \nonumber \\
&\geq& \mathcal{R}(f) +\frac{1}{4} \int_{M} f^{2}\| X \|^{2} - \mathcal{R}(f)^{1/2} \left( \int_{M} f^{2} \| X \|^{2}  \right)^{1/2} \nonumber\\
&=& \left(\sqrt{\mathcal{R}(f)} - \frac{1}{2} \bigg( \int_{M} f^{2} \| X \|^{2}  \bigg)^{1/2}\right)^{2}.
\end{eqnarray}

By the assumption that $\| X \| \leq C \leq 2 \sqrt{\lambda_{0}(M)}$, the fact that $\| f \|_{L^{2}(M)} = 1$ and Proposition \ref{bottom}, we obtain that
\[
\sqrt{\mathcal{R}(f)} - \frac{1}{2} \bigg( \int_{M} f^{2} \| X \|^{2}  \bigg)^{1/2} \geq \sqrt{\lambda_{0}(M_{1})} - C/2 > 0.
\]
The proof is completed by Proposition \ref{bottom} and formula (\ref{estimate}).\qed
\end{proof}\medskip

We end this section with some simple examples where Theorem \ref{basic mean curv thm} and Corollary \ref{closed basic} can be applied.

\begin{examples}\label{examples}
\begin{enumerate}[topsep=0pt,itemsep=-1pt,partopsep=1ex,parsep=0.5ex,leftmargin=*, label=(\roman*), align=left, labelsep=0em]
\item The \textit{warped product} $M_{2} = M_{1} \times_{\psi} F$ is the product manifold endowed with the Riemannian metric $g_{M_{1}} \times \psi^{2} g_{F}$, where $\psi \in C^{\infty}(M_{1})$ is positive. Then the projection to the first factor $p \colon M_{2} \to M_{1}$ is a Riemannian submersion with fibers of basic mean curvature
\[
H= - k \grad \ln \tilde{\psi},
\]
where $k := \dim F$. In this case, the operator $S$ defined in (\ref{operator}) is written as
\[
S = \Delta - \frac{\Delta \psi^{k/2}}{\psi^{k/2}},
\]
and Theorem \ref{basic mean curv thm} says that
\[
\lambda_{0}(M_{2}) \geq \lambda_{0}(S) + \inf_{x\in M_{1}} \lambda_{0}(F_{x}) = \lambda_{0}(S) + \lambda_{0}(F) \inf_{x \in M_{1}} \psi^{-2}(x).
\]
If, in addition, $F$ is closed, then we deduce from Corollary \ref{closed basic} that $\lambda_{0}(M_{2}) = \lambda_{0}(S)$ and $\lambda_{0}^{\ess}(M_{2}) = \lambda_{0}^{\ess}(S)$. In particular, $M_{2}$ has discrete spectrum if and only if the spectrum of $S$ is discrete (compare with \cite[Theorem 3.3]{MR577877}). It noteworthy that surfaces of revolution are warped products of the form $\mathbb{R} \times_{\psi} S^{1}$.

\item Another generalization of surfaces of revolution was introduced by Bishop motivated by a result of Clairaut on such surfaces. A Riemannian submersion $p \colon M_{2} \to M_{1}$ is called \textit{Clairaut submersion} if there exists a positive $f \in C^{\infty}(M_{2})$, such that for any geodesic $c$
on $M_{2}$, the function $(f \circ c) \sin \theta$ is constant, where $\theta (t)$ is the angle between $c^{\prime}(t)$ and ($T_{c(t)}M_{2})^{h}$. Bishop proved that a Riemannian submersion $p \colon M_{2} \to M_{1}$ of complete manifolds with connected fibers, is a Clairaut submersion if and only if the fibers are totally umbilical with mean curvature
\[
H = - k \grad \ln \tilde{\psi}
\]
for some positive $\psi \in C^{\infty}(M_{1})$, where $k$ is the dimension of the fiber (cf. for instance \cite[Theorem 1.7]{MR2110043}). It is not difficult to establish statements for Clairaut submersions of complete manifolds with connected fibers analogous to the ones of the preceding example.

\item Let $p \colon M_{2} \to M_{1}$ be a Riemannian submersion \textit{arising from the action of a Lie group} $G$. That is, $G$ is a Lie group acting smoothly, freely and properly via isometries on a Riemannian manifold $M_{2}$, where $\dim G < \dim M_{2}$. Then $M_{1} := M_{2}/G$ is a Riemannian manifold and the projection $p \colon M_{2} \to M_{1}$ is a Riemannian submersion with fibers of basic mean curvature. In view of Theorem \ref{basic mean curv thm}, the bottoms of the spectra satisfy $\lambda_{0}(M_{2}) \geq \lambda_{0}(S)$. As a consequence of Corollary \ref{closed basic}, if $G$ is compact, then $\lambda_{0}(M_{2}) = \lambda_{0}(S)$.
\end{enumerate}
\end{examples}

\section{Submersions arising from Lie group actions}\label{sec principal}

Throughout this section, we consider a Riemannian submersion $p \colon M_{2} \to M_{1}$ arising from the action of a Lie group $G$. For convenience of the reader, we provide a brief outline of the section and the proof of Theorem \ref{submersion group thm}.

In Subsection \ref{induced metrics subsec}, we show that identifying the fiber with $G$ along a section of the submersion gives rise to a smooth family of left-invariant metrics on $G$. This remark plays a quite important role in our discussion. More precisely, from this and Theorem \ref{basic mean curv thm}, we obtain Theorem \ref{submersion group thm}(iii).

The other assertions of Theorem \ref{submersion group thm} are first proved in the case where $G$ is connected. If $G$ is compact, then they follow from Corollary \ref{closed basic} and \cite[Theorem 1.2]{Mine4}. Thus, we have to focus on the case where $G$ is non-compact and connected. In Subsection \ref{Partition of Unity}, we construct cut-off functions on such $G$ closely related to the open sets $W$ from Corollary \ref{unim and amen domain}. In terms of these functions, for a section $s \colon U \subset M_{1} \to M_{2}$, we define cut-off functions in $p^{-1}(U)$ with uniformly (that is, independently from the corresponding $W$) bounded gradient and Laplacian.

We begin Subsection \ref{pulling up subsec} with the proposition that establishes this auxiliary result. The main idea is that given an $f \in C^{\infty}_{c}(M_{1})$, we may write it as a sum of functions supported in domains admitting sections. Using cut-off functions as above, we are able to pull up these functions, and for suitable choices of $W$, the sum of these pulled up functions coincides with the lift of $f$ in a relatively large part of its support. In the rest of its support, its gradient and its Laplacian are bounded independently from $W$.

The proof of Theorem \ref{submersion group thm} is completed after observing that such a submersion $p$ is expressed as the composition of the submersion arising from the action of $G_{0}$ with the covering arising from the action of $G/G_{0}$.

\subsection{Induced metrics on the Lie group}\label{induced metrics subsec}

Let $p \colon M_{2} \to M_{1}$ be a Riemannian submersion arising from the action of a (possibly non-connected) Lie group $G$. Given a section $s \colon U \subset M_{1} \to M_{2}$, it is easily checked that the map $\Phi \colon G \times U \to p^{-1}(U)$ defined by $\Phi (x,y) := x s(y)$ is a diffeomorphism, and so is its restriction $\Phi_{y} := \Phi(\cdot, y) \colon G \to F_{y}$. Denote by $g_{s(y)}$ the metric induced on $G$ via $\Phi_{y}$, that is, the pullback via $\Phi_{y}$ of the restriction of the metric of $M_{2}$ on $F_{y}$. It is straightforward to see that the metric $g_{s(y)}$ depends only on $s(y)$ and not on the behavior of $s$ in a neighborhood of $y$.

\begin{proposition}\label{induced metric}
Let $s \colon U \subset M_{1} \to M_{2}$ be a section. Then	the Riemannian metric $g_{s(y)}$ is left-invariant and depends smoothly on $y \in U$.
\end{proposition}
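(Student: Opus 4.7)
The plan is to verify the two claims separately; both reduce to straightforward properties of $\Phi$, but the key observation making left-invariance work is that left multiplication on $G$ corresponds under $\Phi_y$ to the $G$-action on $M_2$, which is isometric by hypothesis.

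For left-invariance, I would first compute, for any $a \in G$, the identity
\[
\Phi_y(ax) = (ax)s(y) = a\bigl(x s(y)\bigr) = a \cdot \Phi_y(x),
\]
so that $\Phi_y \circ L_a = \ell_a \circ \Phi_y$, where $L_a$ denotes left multiplication on $G$ and $\ell_a$ denotes the action of $a$ on $M_2$. Since $G$ preserves fibers, $\ell_a$ restricts to a diffeomorphism $F_y \to F_y$, and it is an isometry of $F_y$ as it is the restriction of an isometry of $M_2$. Pulling back the metric, I would then compute
\[
L_a^{*} g_{s(y)} = L_a^{*} \Phi_y^{*} g_{M_2}|_{F_y} = (\Phi_y \circ L_a)^{*} g_{M_2}|_{F_y} = (\ell_a \circ \Phi_y)^{*} g_{M_2}|_{F_y} = \Phi_y^{*} g_{M_2}|_{F_y} = g_{s(y)},
\]
which is precisely left-invariance.

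For smoothness in $y$, since $g_{s(y)}$ is left-invariant, it is determined by its restriction to $T_{e}G = \mathfrak{g}$. Thus it suffices to show that for any $X, Y \in \mathfrak{g}$, the function
\[
y \longmapsto g_{s(y)}(X,Y) = \bigl\langle \Phi_{y *, e} X , \Phi_{y *, e} Y \bigr\rangle_{\Phi_y(e)}
\]
is smooth on $U$. This is immediate from the smoothness of $\Phi \colon G \times U \to p^{-1}(U)$: the differential $\Phi_{y *, e}$ at the identity varies smoothly with $y$, and the Riemannian metric of $M_2$ is smooth, so the inner product above is a smooth function of $y$.

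There is no real obstacle here; the only point that requires a moment of care is the order of composition in the identity $\Phi_y \circ L_a = \ell_a \circ \Phi_y$, which is what forces us to use \emph{left} multiplication on $G$ to obtain invariance (it reflects our choice $\Phi(x,y) = x s(y)$ rather than $s(y)x$).
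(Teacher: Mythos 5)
Your proof is correct and follows essentially the same route as the paper: the identity $\Phi_y(ax)=a\,\Phi_y(x)$ combined with the isometric $G$-action gives left-invariance, and smoothness is reduced, via left-invariance, to smoothness of $y\mapsto \langle \Phi_{y*,e}X,\Phi_{y*,e}Y\rangle_{s(y)}$. The only cosmetic difference is that the paper packages the smoothness step through a $G$-invariant frame obtained from horizontal lifts on $G\times U$, whereas you invoke directly that the partial differential $\Phi_{*,(e,y)}$ depends smoothly on $y$; both are valid.
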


\begin{proof}
For $x_{1} , x_{2} \in G$, it is immediate to verify that
\[
x_{1} \Phi_{y}(x_{2}) = x_{1} x_{2} s(y) = \Phi_{y}(x_{1}x_{2}),
\]
and therefore, ${x_{1}}_{*} {\Phi_{y}}_{*} = {\Phi_{y}}_{*} {L_{x_{1}}}_{*}$. Bearing in mind that $G$ acts on $M_{2}$ via isometries, given $x \in G$ and $X,Y \in T_{e}G$, where $e$ is the neutral element of $G$, it is now elementary to compute
\begin{eqnarray}
g_{s(y)}({L_{x}}_{*}X , {L_{x}}_{*} Y)(x) &=& \langle {\Phi_{y}}_{*} {L_{x}}_{*}X ,{\Phi_{y}}_{*} {L_{x}}_{*} Y \rangle_{\Phi_{y}(x)} = \langle { x_{*} \Phi_{y}}_{*} X ,x_{*} {\Phi_{y}}_{*} Y \rangle_{x \Phi_{y}(e)} \nonumber \\
&=& \langle { \Phi_{y}}_{*} X , {\Phi_{y}}_{*} Y \rangle_{\Phi_{y}(e)}
= g_{s(y)}( X, Y ) (e), \nonumber
\end{eqnarray}
which yields that the induced metric on $G$ is left-invariant.

Choose a left-invariant frame field $\{X_{i}\}_{i=1}^{k}$ on $G$. After endowing $G$ with a left-invariant metric and $G \times U$ with the product metric, it is evident that the projection to the first factor $q \colon G \times U \to G$ is a Riemannian submersion. Consider the horizontal lift $\tilde{X}_{i}$ of $X_{i}$ on $G \times U$. Notice that $\{\tilde{X}_{i}\}_{i=1}^{k}$ is a $G$-invariant, smooth frame field, and hence, so is $\{\Phi_{*}\tilde{X}_{i}\}_{i=1}^{k}$. Then for $y \in U$ and $x \in G$, we deduce that
\[
g_{s(y)}(X_{i} , X_{j})(x) = g_{s(y)}(X_{i},X_{j})(e) = \langle \Phi_{*}\tilde{X}_{i} , \Phi_{*}\tilde{X}_{j} \rangle_{s(y)}.
\]
Since $\langle \Phi_{*}\tilde{X}_{i} , \Phi_{*}\tilde{X}_{j} \rangle_{z}$ is a smooth function (with respect to $z$) in $p^{-1}(U)$, so is its composition with $s$, as we wished. \qed
\end{proof}

\begin{corollary}\label{volume element}
Let $s \colon U \subset M_{1} \to M_{2}$ be a section and fix a left-invariant metric $g$ on $G$. Then there exists $V_{s} \in C^{\infty}(U)$ such that for any $y \in U$, the volume element of the induced metric satisfies
\[
d {\vol}_{g_{s(y)}} = V_{s}(y) d {\vol}_{g}.
\]
\end{corollary}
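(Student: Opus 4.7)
The plan is to exploit the left-invariance established in Proposition \ref{induced metric}. Both $g$ and $g_{s(y)}$ are left-invariant metrics on $G$, hence their associated volume forms are both bi-invariant up to sign under left translations. Since any two left-invariant top-degree forms on a Lie group differ by a (global) constant, at each fixed $y \in U$ the quotient $d\vol_{g_{s(y)}} / d\vol_g$ is a constant function on $G$. Denote this constant by $V_s(y)$; so the identity $d\vol_{g_{s(y)}} = V_s(y) \, d\vol_g$ holds pointwise on $G$ for every $y \in U$.

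It remains to verify that $y \mapsto V_s(y)$ is smooth. For this, pick a left-invariant frame $\{X_i\}_{i=1}^{k}$ on $G$ (as in the proof of Proposition \ref{induced metric}). Since both metrics are left-invariant, the entries $g_{s(y)}(X_i, X_j)$ and $g(X_i, X_j)$ are constant on $G$, and we may evaluate them at the neutral element $e$. The constant $V_s(y)$ is then given by
\[
V_s(y) = \frac{\sqrt{\det\bigl(g_{s(y)}(X_i, X_j)(e)\bigr)}}{\sqrt{\det\bigl(g(X_i, X_j)(e)\bigr)}}.
\]
The denominator is a fixed positive constant. For the numerator, the proof of Proposition \ref{induced metric} already showed that
\[
g_{s(y)}(X_i, X_j)(e) = \langle \Phi_{*}\tilde{X}_i, \Phi_{*}\tilde{X}_j \rangle_{s(y)},
\]
and that the right-hand side is a smooth function of $y \in U$ (as the composition of a smooth section $s$ with smooth inner products on $M_{2}$). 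Hence the Gram matrix depends smoothly on $y$, its determinant is smooth and strictly positive (since $g_{s(y)}$ is a Riemannian metric), and consequently $V_s \in C^\infty(U)$.

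There is no serious obstacle here; the only thing to keep straight is that one must work at the identity $e$ and use left-invariance to propagate the pointwise identity to all of $G$, rather than trying to verify the volume comparison directly at every point.
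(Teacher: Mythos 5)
Your argument is correct and is essentially the paper's own proof spelled out: the paper's one-line justification (``Proposition \ref{induced metric} and the local expression of the volume element'') is exactly your computation $V_{s}(y) = \sqrt{\det\bigl(g_{s(y)}(X_{i},X_{j})(e)\bigr)}/\sqrt{\det\bigl(g(X_{i},X_{j})(e)\bigr)}$ in a left-invariant frame, with left-invariance reducing everything to the identity and Proposition \ref{induced metric} supplying the smooth dependence on $y$. Only the phrase ``bi-invariant up to sign under left translations'' is a slip of wording --- what you use, and all that is needed, is left-invariance of the two volume densities.
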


\begin{proof}
Follows from Proposition \ref{induced metric} and the local expression of the volume element. \qed
\end{proof}\medskip

For $y \in M_{1}$ and $z_{1} , z_{2} \in F_{y}$, consider the diffeomorhisms $\Phi_{i} \colon G \to F_{y}$ defined by $\Phi_{i}(x) = x z_{i}$, and the induced metrics $g_{i} := g_{z_{i}}$ on $G$, $i=1,2$. Because $G$ acts transitively on $F_{y}$, there exists $x_{0} \in G$ such that $x_{0} z_{1} = z_{2}$. Then it is apparent that
\[
\Phi_{2}(x) = x z_{2} = x x_{0} z_{1} = (\Phi_{1} \circ R_{x_{0}}) (x).
\]
In particular, if $G$ is unimodular, then we have that
\begin{equation}\label{volume eq}
d{\vol}_{g_{2}} = \Phi_{2}^{*} (d{\vol}_{F_y}) = R_{x_{0}}^{*}(\Phi_{1}^{*} (d{\vol}_{F_{y}})) = R_{x_{0}}^{*}(d{\vol}_{g_{1}}) = d{\vol}_{g_{1}},
\end{equation}
where $d\vol_{F_{y}}$ is the volume element of $F_{y}$ with respect to the induced metric from $M_{2}$. This implies that the function $V_{s}$ from Corollary \ref{volume element} is independent from the section $s$ and can be defined globally.

\begin{corollary}\label{glob defined V}
Suppose that $G$ is unimodular and fix a left-invariant metric $g$ on $G$. Then there exists $V \in C^{\infty}(M_{1})$ such that for any section $s \colon U \subset M_{1} \to M_{2}$ and $y \in U$, the volume element of the induced metric satisfies
\[
d{\vol}_{g_{s(y)}} = V(y) d{\vol}_{g}.
\]
Moreover, the gradient of $V$ is given by
\[
\grad V = - V p_{*}H.
\]
\end{corollary}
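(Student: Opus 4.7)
The first assertion follows almost immediately from Corollary \ref{volume element} once one observes that unimodularity makes $V_s(y)$ independent of the chosen section $s$. Specifically, equation (\ref{volume eq}) in the text shows that whenever two sections meet in the same fiber, the induced volume forms on $G$ agree (since $R_{x_0}^{*}\, d\vol_g = d\vol_g$ under unimodularity); hence the locally defined $V_s$ agree on overlaps and patch together to a global $V \in C^\infty(M_1)$.

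For the gradient formula, the plan is to work with the lift $\tilde V := V \circ p$ on $M_2$, reduce the problem to a pointwise computation against horizontal vectors, and identify the result with the mean curvature $H$. Concretely, choose a left-invariant frame $\{X_i\}_{i=1}^{k}$ on $G$ with $\{X_i(e)\}$ orthonormal for $g$, and let $X_i^{*}$ denote the Killing vector field on $M_2$ generated by $X_i$ via the $G$-action; the proof of Proposition \ref{induced metric} shows $X_i^{*}(s(y)) = \Phi_{y*}X_i(e)$, so the Gram matrix $A_{ij}(z) := \langle X_i^{*}(z), X_j^{*}(z)\rangle$ satisfies $\det A(s(y)) = V(y)^{2}$. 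An unimodularity argument parallel to (\ref{volume eq}) (using $|\det \mathrm{Ad}(x_0^{-1})| = 1$) shows that $\det A$ is constant along fibers, so $\det A = \tilde V^{2}$ globally on $M_2$. Since $\tilde V$ is basic, $\grad \tilde V$ is horizontal, and by Lemma \ref{lift}(i) it suffices to prove $\grad \tilde V = -\tilde V H$ on $M_2$.

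The core step is a Jacobi-formula computation: $Y(\det A) = \det A \cdot \tr(A^{-1} Y(A))$, and expanding via the Killing identity $\langle \nabla_U X_i^{*}, W\rangle + \langle \nabla_W X_i^{*}, U\rangle = 0$ yields
\[
Y(\tilde V^{2}) = -2\, \tilde V^{2} \Big\langle \sum_{i,j} (A^{-1})^{ij} \nabla_{X_i^{*}} X_j^{*},\, Y \Big\rangle.
\]
The main obstacle is the identity
\[
\sum_{i,j} (A^{-1})^{ij} \nabla_{X_i^{*}} X_j^{*} = (\text{vertical}) + \sum_{\beta} \nabla_{E_\beta} E_\beta
\]
in a local orthonormal vertical frame $\{E_\beta\}$: expanding $X_j^{*} = a_j{}^\beta E_\beta$ and using the matrix algebra $\sum_{i,j}(A^{-1})^{ij} a_j{}^\beta X_i^{*} = E_\beta$ collapses the second factor to the ``divergence'' of the vertical distribution. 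For horizontal $Y$ the vertical term vanishes, and the remaining horizontal contribution is exactly $\langle H, Y\rangle$ by the definition of the mean curvature. Hence $\grad \tilde V = -\tilde V H$ on $M_2$; since $H$ is basic and $\tilde V$ is a lift, Lemma \ref{lift}(i) yields $\grad V = -V p_{*} H$ on $M_1$.
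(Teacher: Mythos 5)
Your proof is correct, but for the gradient formula it takes a genuinely different route from the paper. The first assertion (patching the local $V_{s}$ of Corollary \ref{volume element} via formula (\ref{volume eq})) is exactly the paper's argument. For $\grad V = -V\,p_{*}H$, the paper argues geometrically in one stroke: it picks a section $s$ that is horizontal at the given point $y$, considers the variation $F(t,x)=x\,s(c(t))$ of the isometric immersion $\Phi_{y}\colon (G,g_{s(y)})\to M_{2}$ along a curve $c$ with $c'(0)=X$, notes that the variational field is the horizontal lift $\tilde{X}$ (hence normal to the fiber, because the $G$-action preserves the horizontal distribution), and reads off $X(V)=-V\langle p_{*}H,X\rangle$ from the first variation formula for the volume element. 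You instead compute on the total space with the fundamental (Killing) vector fields $X_{i}^{*}$: your identification $\det A(s(y))=V(y)^{2}$, the fiberwise constancy of $\det A$ via $x_{*}X^{*}(z)=(\mathrm{Ad}(x)X)^{*}(xz)$ and $|\det \mathrm{Ad}|=1$, the Jacobi-formula/Killing-equation step, and the collapse $\sum_{i,j}(A^{-1})^{ij}\nabla_{X_{i}^{*}}X_{j}^{*}=(\mathrm{vertical})+\sum_{\beta}\nabla_{E_{\beta}}E_{\beta}$ (using tensoriality in the lower slot and invertibility of the coefficient matrix, which freeness of the action guarantees) are all sound, and they correctly yield $\grad\tilde{V}=-\tilde{V}H$ and hence the claim. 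The trade-off: the paper's proof is shorter and conceptually transparent, but hinges on choosing a section horizontal at the point and on quoting the first variation of area; your computation is longer but self-contained, avoids any special choice of section, gives the identity directly on $M_{2}$, and re-derives along the way that $\det A$ (equivalently $\tilde V$) is basic. One small caution: $\Phi_{y*}X_{i}=X_{i}^{*}\circ\Phi_{y}$ holds only at the identity (at general $x$ one gets $(\mathrm{Ad}(x)X_{i})^{*}$), but you only use it at $e$, and your unimodularity step accounts for the Adjoint correctly, so this is not a gap.
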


\begin{proof}
The existence of the function $V$ is a consequence of Corollary \ref{volume element} and formula (\ref{volume eq}). About the second statement, let $y \in M_{1}$ and $s \colon U \subset M_{1} \to M_{2}$ be a section defined in a neighborhood $U$ of $y$ that is horizontal at $y$, which means that $s_{*}T_{y}M_{1}$ is the horizontal space of $M_{2}$ at $s(y)$. Let $X \in T_{y}M_{1}$ and $c \colon (- \varepsilon , \varepsilon) \to M_{1}$ be a smooth curve with $c(0) = y$ and $c^{\prime}(0) = X$. Denote by $F \colon (- \varepsilon , \varepsilon) \times G \to M_{2}$ the smooth variation of the isometric immersion $F(0, \cdot) \colon (G,g_{s(y)}) \to M_{2}$ defined by $F(t,x) = x s(c(t))$,
and observe that its variational vector field is the horizontal lift $\tilde{X}$ of $X$ on $F_{y}$. The asserted equality follows now from the first variational formula.\qed
\end{proof}\medskip

It is well known that if $p \colon M_{2} \to M_{1}$ is a Riemannian submersion and $M_{2}$ is complete, then so is $M_{1}$. According to the next corollary, if the submersion arises from the action of a Lie group, the converse implication is also true.

\begin{corollary}\label{complete}
Let $p \colon M_{2} \to M_{1}$ be a Riemannian submersion arising from the action of a Lie group $G$. If $M_{1}$ is complete, then $M_{2}$ is complete.
\end{corollary}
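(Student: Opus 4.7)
The plan is to verify the Hopf--Rinow criterion: every unit-speed geodesic extends for all time. Suppose instead that $\gamma \colon [0,T) \to M_{2}$ is a maximal unit-speed geodesic with $T < \infty$. Since the differential of $p$ is an isometry on horizontal vectors and vanishes on vertical ones, $p \circ \gamma$ is a curve of speed at most $1$ in $M_{1}$; by completeness of $M_{1}$, it extends continuously to $[0,T]$ with limit $x_{0} := \lim_{t \to T}(p \circ \gamma)(t)$.

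Choose a section $s \colon U \to M_{2}$ defined on an open neighborhood $U$ of $x_{0}$ and a relatively compact open $V$ with $x_{0} \in V$ and $\bar{V} \subset U$. For $t$ sufficiently close to $T$ we have $p(\gamma(t)) \in V$, and the diffeomorphism $\Phi \colon G \times U \to p^{-1}(U)$, $\Phi(x,y) = x s(y)$, produces a unique smooth decomposition $\gamma(t) = a(t) \cdot s(p(\gamma(t)))$ for some $a \colon [t_{1},T) \to G$. Differentiating and using that $G$ acts by isometries, the summand $a(t)_{*} s_{*}(p \circ \gamma)'(t)$ has norm bounded by $C := \sup_{\bar{V}} \| s_{*} \|$; since $\| \gamma' \| = 1$, the remaining piece $(\Phi_{p(\gamma(t))})_{*} a'(t)$ has uniformly bounded norm, and this norm equals $\| a'(t) \|_{g_{s(p(\gamma(t)))}}$ by definition of the induced metrics.

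Fix any left-invariant metric $g$ on $G$. By Proposition \ref{induced metric}, the metrics $g_{s(y)}$ are left-invariant and depend smoothly on $y \in U$; hence on the compact set $\bar{V}$ they are uniformly equivalent to $g$, so $\| a'(t) \|_{g}$ is uniformly bounded on $[t_{1}, T)$. Any homogeneous Riemannian manifold is complete, and $(G,g)$ is homogeneous under its own left action, hence complete. A curve with bounded $g$-speed on a bounded interval is Cauchy, so $a(t)$ converges to some $a_{T} \in G$; therefore $\gamma(t) \to a_{T} \cdot s(x_{0})$ as $t \to T$. Local existence of geodesics then extends $\gamma$ past $T$, contradicting maximality, and $M_{2}$ is complete.

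The main technical point is the uniform $g$-boundedness of $a'(t)$, which relies on Proposition \ref{induced metric} (the smooth dependence of the left-invariant metrics $g_{s(y)}$ on $y$) together with the classical completeness of any left-invariant Riemannian metric on a Lie group viewed as a homogeneous space. The remaining steps are a standard Hopf--Rinow style argument via the section.
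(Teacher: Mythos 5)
Your argument is correct, and it rests on the same two pillars as the paper's proof: the local trivialization $\Phi(x,y)=xs(y)$ coming from a section, and Proposition \ref{induced metric}, which (via left-invariance plus smooth dependence on $y$, hence uniform comparability with a fixed left-invariant metric $g$ over the compact set $\bar V$) lets you transfer bounds in $M_{2}$ to bounds in $(G,g)$, whose completeness as a homogeneous space you then invoke. The difference is the completeness criterion: the paper works with metric completeness directly, taking a Cauchy sequence $(z_{n})$ in $M_{2}$, pushing it down to get a limit $y\in M_{1}$, joining $z_{n}$ to $z_{m}$ by short curves inside $p^{-1}(U_{y})$, and showing via the length comparison $\ell_{g}(q\circ\Phi^{-1}\circ c_{n,m})\leq C\,\ell(c_{n,m})$ that the $G$-components form a Cauchy sequence in $(G,g)$; you instead prove geodesic completeness and conclude by Hopf--Rinow, decomposing the velocity of a maximal unit-speed geodesic as $\gamma'=(L_{a(t)})_{*}s_{*}c'+(\Phi_{c(t)})_{*}a'(t)$ and bounding $\|a'\|_{g}$. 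The paper's route has the small advantage of needing nothing beyond length estimates for arbitrary connecting curves, whereas your route additionally uses the standard escape-lemma step at the end (a unit-speed geodesic whose image stays in a compact set extends past any finite time), which you compress into ``local existence of geodesics''; that step is standard since $\|\gamma'\|=1$ keeps $(\gamma,\gamma')$ in a compact subset of the unit tangent bundle, but it deserves a sentence. Conversely, your explicit velocity decomposition makes the key estimate (the comparison constant over $\bar V$) more transparent than the paper's rather terse assertion $\ell_{g}(\hat c_{n,m})\leq C\,\ell(c_{n,m})$.
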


\begin{proof}
Fix a left-invariant metric $g$ on $G$ and let $(z_{n})_{n \in \mathbb{N}} \subset M_{2}$ be a Cauchy sequence. Then $(p(z_{n}))_{n \in \mathbb{N}}$ is a Cauchy sequence in $M_{1}$, and hence, $p(z_{n}) \rightarrow y$ for some $y \in M_{1}$. Let $s \colon U \subset M_{1} \to M_{2}$ be a section defined in a neighborhood $U$ of $y$, and consider the corresponding diffeomorphism $\Phi \colon G \times U \to M_{2}$, as in the beginning of this subsection.
Without loss of generality, we may assume that $z_{n} \in p^{-1}(U)$ for any $n \in \mathbb{N}$. 
Writing $z_{n} = \Phi(x_{n},p(z_{n}))$, it remains to prove that $(x_{n})_{n \in \mathbb{N}}$ converges. Given a precompact neighborhood $U_{y}$ of $y$ with $\bar{U}_{y} \subset U$, it simple to see that for any sufficiently small $\varepsilon > 0$, there is $n_{0} \in \mathbb{N}$, such that for any $n,m \geq n_{0}$ there exists a smooth curve $c_{n,m}$ from $z_{n}$ to $z_{m}$ of length less than $\varepsilon$, whose image is contained in $p^{-1}(U_{y})$. Denoting by $q \colon G \times U \to G$ the projection to the first factor, it is clear that $\hat{c}_{n,m} := q \circ \Phi^{-1} \circ c_{n,m}$ is a smooth curve from $x_{n}$ to $x_{m}$. Since $U_{y}$ is precompact, we derive from Proposition \ref{induced metric} that there exists $C>0$ such that $\ell_{g}(\hat{c}_{n,m}) \leq C \ell(c_{n,m})$ for any $n,m \geq n_{0}$, where $\ell(\cdot)$ stands for the length of a curve. This shows that $(x_{n})_{n \in \mathbb{N}}$ is Cauchy in $(G,g)$, and thus, converges. \qed
\end{proof}

\subsection{Cut-off Functions}\label{Partition of Unity}

The aim of this subsection is to construct some special functions on the Lie group that will be used in the sequel to obtain cut-off functions on $M_{2}$. Throughout this subsection, we consider a non-compact, connected Lie group $G$ endowed with a left-invariant metric. Given $r > 0$, choose a sequence $(x_{n})_{n \in \mathbb{N}} \subset G$ such that $d(x_{n},x_{m}) \geq r$ for any $n \neq m$ and the open balls $B(x_{n},r)$ cover $G$.

\begin{lemma}\label{Covering Lemma}
There exists $n(r) \in \mathbb{N}$ such that any $x \in G$ lies in at most $n(r)$ of the balls $B(x_{n},2r)$, with $n \in \mathbb{N}$.
\end{lemma}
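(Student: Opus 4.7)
The plan is to use a standard packing argument, exploiting the fact that left-invariance forces all balls of a given radius to have equal (and finite, since the balls are precompact) volume.

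First I would note that the separation condition $d(x_n, x_m) \geq r$ for $n \neq m$ implies that the balls $B(x_n, r/2)$ are pairwise disjoint. The idea is then to compare volumes: fix $x \in G$ and let $I = \{ n \in \mathbb{N} : x \in B(x_n, 2r) \}$. I would show that for every $n \in I$, we have $B(x_n, r/2) \subset B(x, 5r/2)$, which follows from the triangle inequality: if $y \in B(x_n, r/2)$, then $d(x, y) \leq d(x, x_n) + d(x_n, y) < 2r + r/2 = 5r/2$.

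By disjointness of the balls $B(x_n, r/2)$ for $n \in I$ and monotonicity of volume,
\[
\sum_{n \in I} \vol B(x_n, r/2) \leq \vol B(x, 5r/2).
\]
Left-invariance of the metric (and hence of the induced Riemannian volume element on $G$) gives $\vol B(x_n, r/2) = \vol B(e, r/2)$ and $\vol B(x, 5r/2) = \vol B(e, 5r/2)$, where $e$ is the neutral element of $G$. Since these balls are precompact, their volumes are positive and finite, so dividing yields
\[
|I| \leq \frac{\vol B(e, 5r/2)}{\vol B(e, r/2)} =: n(r),
\]
which is the desired bound, independent of the point $x$.

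There is essentially no obstacle here: the only subtlety worth mentioning is that both volumes on the right-hand side are finite and the denominator is non-zero, which is automatic since a left-invariant metric on a Lie group is smooth and every open ball of positive radius is a non-empty open set with precompact closure in the identity neighborhood. The argument is a direct analogue of the standard Vitali-type covering estimate in Euclidean space, with the role of the Lebesgue measure played here by the (bi-)Haar measure; no amenability, unimodularity, or completeness of $G$ is required for this counting bound.
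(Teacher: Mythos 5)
Your proof is correct, and its skeleton is the same packing argument as in the paper: the balls $B(x_{n},r/2)$ are pairwise disjoint, each is contained in $B(x,5r/2)$ for $n$ with $x\in B(x_{n},2r)$, and one compares volumes. The only divergence is the last step: the paper replaces each $|B(x_{n},r/2)|$ by $|B(x,r/2)|$ via homogeneity and then invokes the Bishop--Gromov comparison (using that a left-invariant metric has Ricci curvature bounded below) to bound the ratio $|B(x,5r/2)|/|B(x,r/2)|$ by the corresponding space-form ratio, uniformly in $x$; you instead observe that left-invariance makes this ratio literally equal to $|B(e,5r/2)|/|B(e,r/2)|$, so no curvature comparison is needed. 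Your route is slightly more elementary, and is perfectly adequate here; what it quietly uses is that the balls have finite positive volume, which follows because a left-invariant metric on a Lie group is a homogeneous Riemannian metric, hence complete, so closed balls are compact by Hopf--Rinow --- your stated justification (``smoothness of the metric, precompact closure'') is looser than this and would be worth replacing by the completeness/Hopf--Rinow argument. A cosmetic remark: the Riemannian volume of a left-invariant metric is a left Haar measure, but it is bi-invariant only when $G$ is unimodular, so the parenthetical ``(bi-)Haar'' should be dropped; this does not affect the proof, which only uses invariance under left translations.
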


\begin{proof}
Let $x \in G$ and set $E_{x} := \{ n \in \mathbb{N} : x \in B(x_{n},2r) \}$. Notice that for $n \in E_{x}$, we have that $B(x_{n},r/2) \subset B(x,5r/2)$ and the balls $B(x_{n},r/2)$ are disjoint. Bearing in mind that $G$ is a homogeneous space, we compute
\[
|B(x,5r/2)| \geq \sum_{n \in E_{x}} |B(x_{n},r/2)| = |E_{x}| |B(x,r/2)|,
\]
where $|E_{x}|$ is the cardinality of $E_{x}$. Since the Ricci curvature of $G$ is bounded from below (say by $(k-1)C$, where $k$ is the dimension of $G$), the Bishop-Gromov volume comparison theorem gives the estimate
\[
|E_{x}| \leq \frac{|B(x,5r/2)|}{|B(x,r/2)|} \leq \frac{|B_{5r/2}|}{|B_{r/2}|} =: n(r),
\]
where $B_{\rho}$ is a ball of radius $\rho$ in the $k$-dimensional space form of sectional curvature $C$. \qed
\end{proof}\medskip

Fix $\psi_{e} \in C^{\infty}_{c}(G)$ with $0\leq \psi_{e} \leq 1$, $\supp \psi_{e} \subset B(e,3r/2)$ and $\psi_{e} = 1$ in $B(e,r)$. For $n \in \mathbb{N}$, the function $\psi_{n} := \psi_{e} \circ L_{x_{n}}^{-1}$ satisfies $0\leq \psi_{n} \leq 1$, $\supp \psi_{n} \subset B(x_{n},3r/2)$ and $\psi_{n} = 1$ in $B(x_{n},r)$.
We know from Lemma \ref{Covering Lemma} that the cover $\{B(x_{n},3r/2)\}_{n \in \mathbb{N}}$ is locally finite, which implies that the function $\psi := \sum_{n \in \mathbb{N}} \psi_{n}$ is well-defined and smooth. It is evident that $\psi \geq 1$, $G$ being covered by the balls $B(x_{n} , r)$. The smooth partition of unity on $G$ consisting of the functions $\zeta_{n} := \psi_{n}/\psi$ with $n \in \mathbb{N}$, is called a \textit{partition of unity corresponding to} $r$.
Apparently, any point of $G$ lies in at most $n(r)$ of the supports of $\zeta_{n}$, where $n(r)$ is the constant from Lemma \ref{Covering Lemma}. 
The cut-off function corresponding to a subset $E$ of $\mathbb{N}$ is defined by
\[
\chi_{E} := \sum_{n \in E} \zeta_{n}.
\]

Let $p \colon M_{2} \to M_{1}$ be a Riemannian submersion arising from the action of a non-compact, connected Lie group $G$. Consider a relatively compact, open domain $U \subset M_{1}$ that admits an extensible section $s \colon U \to M_{2}$, and the corresponding diffeomorphism $\Phi \colon G \times U \to p^{-1}(U)$ defined by $\Phi(x,y) := x s(y)$. For a function $f \colon G \to \mathbb{R}$, we denote by $f_{s} \colon p^{-1}(U) \to \mathbb{R}$ the function satisfying
\[
f_{s}(\Phi(x,y)) := f(x)
\]
for any $x \in G$ and $y \in U$. Given a left-invariant metric on $G$ and $r > 0$, we consider a partition of unity on $G$ corresponding to $r$ and the functions $\chi_{E}$ for $E \subset \mathbb{N}$.

\begin{lemma}\label{uniform estimate}
Let $s \colon U \to M_{2}$ be an extensible section defined on a precompact domain $U$ of $M_{1}$. Then there exists a constant $C$ independent from $E \subset \mathbb{N}$, such that
\[
| \Delta (\chi_{E})_{s} (z) | \leq C \text{ and } \| \grad (\chi_{E})_{s} (z) \| \leq C
\]
for any $z \in p^{-1}(U)$.
\end{lemma}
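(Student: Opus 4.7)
The plan is to transfer the estimate to $G \times U$ through the diffeomorphism $\Phi \colon G \times U \to p^{-1}(U)$, $\Phi(x,y) := xs(y)$, which is an isometry when $G\times U$ carries the pullback metric $\Phi^{*}g_{M_{2}}$. The pulled back function $f_{E} := (\chi_{E})_{s} \circ \Phi$ depends only on the first factor, namely $f_{E}(x,y) = \chi_{E}(x)$, so it suffices to bound $\| \grad f_{E} \|$ and $| \Delta f_{E}|$ with respect to $\Phi^{*}g_{M_{2}}$. I will work in the global frame $\{X_{i}, \partial_{y_{\alpha}}\}$ on $G \times U$, where $\{X_{i}\}_{i=1}^{k}$ is a left-invariant frame on $G$ and $\{\partial_{y_{\alpha}}\}$ is a local coordinate frame on $U$.

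The key observation is that the $G$-action on $G \times U$ defined by $x_{0} \cdot (x,y) := (x_{0}x, y)$ is by isometries of $\Phi^{*} g_{M_{2}}$, because $\Phi$ intertwines it with the isometric action of $G$ on $M_{2}$. Consequently, every component $g_{ab}(x,y)$ of the metric in the frame $\{X_{i}, \partial_{y_{\alpha}}\}$ is independent of $x \in G$, and Proposition \ref{induced metric} guarantees that it depends smoothly on $y \in U$. Since $s$ is extensible, $\bar{U}$ is a compact subset of a larger open set admitting a section, so the metric components, their derivatives, the inverse-metric coefficients $g^{ab}$, and the Christoffel symbols $\Gamma^{c}_{ab}$ in this frame are all bounded on $\bar{U}$ by constants depending only on $s$, not on the particular point $(x,y)$.

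For the derivatives of $f_{E}$, note that $\partial_{y_{\alpha}} f_{E} = 0$ and $X_{i} f_{E}(x,y) = X_{i} \chi_{E}(x)$ depend only on $x$, and similarly for the second-order derivatives $X_{i}X_{j} f_{E}$. Since $\psi_{n} = \psi_{e} \circ L_{x_{n}}^{-1}$ and the $X_{i}$ are left-invariant, the functions $X_{i} \psi_{n}$ and $X_{i} X_{j} \psi_{n}$ are left translates of $X_{i} \psi_{e}$ and $X_{i} X_{j} \psi_{e}$, hence uniformly bounded in $n$. Lemma \ref{Covering Lemma} ensures that at any $x \in G$ at most $n(r)$ of the $\psi_{n}$ (and so at most $n(r)$ of the $\zeta_{n}$, with $n \in E$) are nonzero; combined with $\psi \geq 1$, the quotient rule then yields bounds on $X_{i} \chi_{E}$ and $X_{i} X_{j} \chi_{E}$ that depend only on $r$ and $\psi_{e}$, not on $E$.

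Combining the two bounded contributions through the standard frame formulas $\grad f_{E} = \sum g^{ab}(e_{a} f_{E}) e_{b}$ and $\Delta f_{E} = \sum g^{ab}(e_{a} e_{b} f_{E} - \Gamma^{c}_{ab} e_{c} f_{E})$ (up to the sign convention for $\Delta$) produces the desired uniform estimates on $p^{-1}(U)$, since $\Phi$ is an isometry. The main step to be careful with is verifying that the smooth dependence in Proposition \ref{induced metric} extends in a controlled manner to the closure $\bar{U}$, which is precisely what the extensibility hypothesis provides; once this is in place, everything else reduces to straightforward bookkeeping.
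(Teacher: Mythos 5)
Your proof is correct and takes essentially the same route as the paper: both arguments rest on the isometric $G$-action (left-invariance), on extensibility of $s$ together with precompactness of $U$ to get uniform control up to $\bar{U}$, on Lemma \ref{Covering Lemma} for local finiteness, and on the quotient formula $(\chi_{E})_{s} = \sum_{n \in E} (\psi_{n})_{s} / \sum_{n \in \mathbb{N}} (\psi_{n})_{s}$ with denominator at least $1$. The paper merely shortcuts your frame/Christoffel bookkeeping by bounding $\Delta (\psi_{e})_{s}$ and $\| \grad (\psi_{e})_{s} \|$ directly on $p^{-1}(U)$ and transporting these bounds via the isometries $z \mapsto x_{n} z$, which yields the uniform-in-$n$ (hence uniform-in-$E$) estimates in one step.
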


\begin{proof}
Since $U$ is precompact and $s$ is extensible, it is easily checked that the Laplacian and the gradient of $(\psi_{e})_{s}$ are bounded. Since $(\psi_{n})_{s}(z) = (\psi_{e})_{s}(x_{n}^{-1}z)$ for any $n \in \mathbb{N}$ and $z \in p^{-1}(U)$, we obtain uniform estimates for the Laplacian and the gradient of $(\psi_{n})_{s}$ for all $n \in \mathbb{N}$. Then Lemma \ref{Covering Lemma} yields a uniform bound for the Laplacian and the gradient of the functions $\sum_{n \in E} (\psi_{n})_{s}$ for all subsets $E \subset \mathbb{N}$. The proof is completed after observing that
\[
(\chi_{E})_{s} = \frac{\sum_{n \in E} (\psi_{n})_{s}}{\sum_{n \in \mathbb{N}} (\psi_{n})_{s}}
\]
and that $\sum_{n \in \mathbb{N}} (\psi_{n})_{s} \geq 1$.\qed
\end{proof}\medskip

The purpose of considering this partition of unity becomes more clear in the next proposition, where we combine this construction with Corollary \ref{unim and amen domain} in the case where $G$ is unimodular and amenable.

\begin{proposition}\label{Cut-Off}
Let $G$ be a non-compact, connected, unimodular and amenable Lie group endowed with a left-invariant metric. Consider $r > 0$ and a partition of unity $\{\zeta_{n}\}_{n\in \mathbb{N}}$ corresponding to $r/2$. Then for any $\varepsilon>0$, there exists an open, bounded $W \subset G$ and a finite $E \subset \mathbb{N}$, such that $\chi_{E} = 1$ in $W \smallsetminus (\partial W)^{r}$, $\supp \chi_{E} \subset W^{r/2}$ and
\[
|(\partial W)^{2r}| < \varepsilon |W \smallsetminus (\partial W)^{2r}|.
\]
\end{proposition}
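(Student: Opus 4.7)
The plan is to take $W$ straight from Corollary \ref{unim and amen domain}, applied with the pair $(\varepsilon, 2r)$ in place of $(\varepsilon, r)$, which at once delivers an open, bounded $W \subset G$ with
\[
|(\partial W)^{2r}| < \varepsilon\, |W \smallsetminus (\partial W)^{2r}|.
\]
All that remains is to select a finite index set $E \subset \mathbb{N}$ with the two cut-off properties, for which I would take
\[
E := \{\, n \in \mathbb{N} : x_{n} \in W \text{ and } d(x_{n}, \partial W) > r/4 \,\}.
\]
Since the $x_{n}$ are $(r/2)$-separated, the balls $B(x_{n}, r/4)$ with $n \in E$ are pairwise disjoint open subsets of the bounded set $W$, and by homogeneity of $G$ they all have a common positive volume; thus $E$ is finite.

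To verify $\chi_{E} \equiv 1$ on $W \smallsetminus (\partial W)^{r}$, I would use $\sum_{n} \zeta_{n} \equiv 1$ and show that every $n$ with $\zeta_{n}(y) \neq 0$ at some $y \in W \smallsetminus (\partial W)^{r}$ already lies in $E$. The partition corresponding to $r/2$ gives $\supp \zeta_{n} \subset B(x_{n}, 3r/4)$, so $d(y, x_{n}) < 3r/4$; combined with $d(y, \partial W) \geq r$ this forces $B(y, 3r/4) \subset W$ (hence $x_{n} \in W$) and $d(x_{n}, \partial W) > r - 3r/4 = r/4$, so $n \in E$.

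To verify $\supp \chi_{E} \subset W^{r/2}$, fix $n \in E$ and $z \in \supp \zeta_{n} \subset B(x_{n}, 3r/4)$. The strict inequality $d(x_{n}, \partial W) > r/4$ ensures $\overline{B(x_{n}, r/4)} \subset W$; by Hopf--Rinow (left-invariant metrics on Lie groups are complete) there is a minimizing geodesic from $x_{n}$ to $z$, and the point on it at arc-length $\min(d(x_{n}, z), r/4)$ lies in $W$ and is at distance less than $r/2$ from $z$. Hence $d(z, W) < r/2$.

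The only real subtlety is the calibration of $E$: the naive choice $\{n : x_{n} \in W\}$ delivers only $\supp \chi_{E} \subset W^{3r/4}$, which is too large. The buffer of width $r/4$ — precisely the gap between $3r/4$ (the $\psi_{n}$-support radius) and $r/2$ (the target tube width) — is exactly what lets the geodesic shortcut in the support estimate succeed, while at the same time keeping $E$ large enough to cover $W \smallsetminus (\partial W)^{r}$.
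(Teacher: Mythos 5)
Your proof is correct and follows essentially the same route as the paper: you take $W$ from Corollary \ref{unim and amen domain} (with radius parameter $2r$) and your index set $E=\{n: x_n\in W,\ d(x_n,\partial W)>r/4\}$ is the paper's $E=\{n: x_n\in W\smallsetminus(\partial W)^{r/4}\}$ up to the boundary case of the inequality. You merely spell out the finiteness of $E$ and the two support verifications that the paper leaves as elementary, and these details are accurate.
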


\begin{proof}
As a consequence of Corollary \ref{unim and amen domain}, for any $\varepsilon > 0$, there exists an open, bounded $W \subset G$ such that the desired inequality for the volumes holds. Consider the finite set $E := \{ n \in \mathbb{N} : x_{n} \in W \smallsetminus (\partial W)^{r/4} \}$. It is elementary to verify that if $x \in W \smallsetminus (\partial W)^{r}$, then $n \in E$ for any $n \in \mathbb{N}$ with $x \in B(x_{n},3r/4)$, and therefore, $\chi_{E} = 1$ in $W \smallsetminus (\partial W)^{r}$. From the fact that $\supp \zeta_{n} \subset B(x_{n},3r/4)$, it follows that $\supp \chi_{E} \subset W^{r/2}$.\qed
\end{proof}

\subsection{Pulling up}\label{pulling up subsec}

Suppose now that $G$ is unimodular and let $V$ be the function from Corollary \ref{glob defined V}. A straightforward calculation shows that
\[
S = \Delta + \frac{1}{4} \| p_{*}H \|^{2} - \frac{1}{2} \diver p_{*}H = \Delta - \frac{\Delta \sqrt{V}}{\sqrt{V}}.
\]
This allows us to consider the renormalization
\[
L = m_{\sqrt{V}}^{-1} \circ S \circ m_{\sqrt{V}} = \Delta - \grad \ln V = \Delta + p_{*}H
\]
of $S$ with respect to $\sqrt{V}$, where we used again Corollary \ref{glob defined V}. According to Lemma \ref{lift}, the Laplacian of the lift $\tilde{f}$ of any $f \in C^{\infty}(M_{1})$ is given by
\begin{equation}\label{lift eq}
\Delta \tilde{f} = \widetilde{Lf}.
\end{equation}

\begin{proposition}\label{Pulling Up}
Let $p \colon M_{2} \to M_{1}$ be a Riemannian submersion arising from the action of a non-compact, connected, unimodular and amenable Lie group $G$. Then for any $\lambda \in \mathbb{R}$, $\varepsilon > 0$ and $f \in C^{\infty}_{c}(M_{1}) \smallsetminus \{0\}$, there exists $h \in C^{\infty}_{c}(M_{2}) \smallsetminus \{0\}$, such that
\[
\frac{\| (\Delta - \lambda)h \|_{L^{2}(M_{2})}^{2}}{\| h \|_{L^{2}(M_{2})}^{2}} \leq \frac{\| (L - \lambda)f \|_{L^{2}_{\sqrt{V}}(M_{1})}^{2}}{\| f \|_{L^{2}_{\sqrt{V}}(M_{1})}^{2}} + \varepsilon.
\]
\end{proposition}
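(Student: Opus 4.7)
The plan is to multiply the lift $\tilde f$ of $f$ by a Følner-type cut-off in $G$ supplied by Proposition \ref{Cut-Off}, so that, via identity (\ref{lift eq}), the quotient on the left reduces to the one on the right up to errors controlled by the Følner condition. Because $p$ need not admit a global section, this must be done chart-by-chart via a partition of unity on $M_1$, and the resulting patching is controlled by the fact that the transition cocycle acts on the Lie group by right translation, which moves points a uniformly bounded distance since left translation on $G$ is isometric.

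I first cover $\supp f$ by finitely many precompact open sets $U_1,\dots,U_N\subset M_1$ each admitting an extensible section $s_i\colon U_i'\to M_2$, take a smooth partition of unity $\{\phi_i\}$ subordinate to $\{U_i\}$, and set $f_i:=\phi_i f$. The sections determine smooth transition maps $\sigma_{ij}\colon U_i\cap U_j\to G$ by $s_i=\sigma_{ij}s_j$, whose images over $\supp\phi_i\cap\supp\phi_j$ lie in a compact $K\subset G$. Since left translation is isometric, $d_g(x,x\sigma)=d_g(e,\sigma)\le R$ for all $x\in G$ and $\sigma\in K$, for some $R>0$. Fix $r>R$ and $\varepsilon'>0$ (small, to be chosen later). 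With a partition of unity on $G$ corresponding to $r/2$, Proposition \ref{Cut-Off} yields an open bounded $W\subset G$, a finite $E\subset\mathbb{N}$, and $\chi_E\in C_c^\infty(G)$ with $\chi_E\equiv 1$ on $W\smallsetminus(\partial W)^r$, $\supp\chi_E\subset W^{r/2}$, and $|(\partial W)^{2r}|<\varepsilon'|W\smallsetminus(\partial W)^{2r}|$. The candidate is
\[
h := \sum_{i=1}^N \tilde f_i\,(\chi_E)_{s_i}\;\in\;C_c^\infty(M_2).
\]

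A point $z=xs_i(y)\in F_y$ satisfies $(\chi_E)_{s_j}(z)=\chi_E(x\sigma_{ij}(y))$; on the bulk $\{x\in W\smallsetminus(\partial W)^{r+R}\}$, the bound $R<r$ places $x\sigma_{ij}(y)$ in $W\smallsetminus(\partial W)^r$ for every $j$ with $y\in U_j'$, forcing $(\chi_E)_{s_j}\equiv 1$ locally around $z$. Hence $h=\tilde f$ on the bulk with vanishing derivatives of the $(\chi_E)_{s_j}$. From Lemma \ref{lift}(i), identity (\ref{lift eq}), and the product rule applied summand-wise,
\[
(\Delta-\lambda)h = \sum_i(\chi_E)_{s_i}\,\widetilde{(L-\lambda)f_i} + \sum_i\tilde f_i\,\Delta(\chi_E)_{s_i} - 2\sum_i\bigl\langle\widetilde{\grad f_i},\grad(\chi_E)_{s_i}\bigr\rangle
\]
equals $\widetilde{(L-\lambda)f}$ on the bulk (the first sum collapses by linearity and $\sum_i f_i=f$), while off the bulk the last two sums are supported in $\Phi_i((\partial W)^{2r}\times U_i)$ and are uniformly bounded by Lemma \ref{uniform estimate} together with the boundedness of $f,\grad f$ and $Lf$ on $\supp f$. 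Corollary \ref{glob defined V} (where unimodularity enters crucially) makes $d\vol_{F_y}=V(y)d\vol_g$ independent of the section, so fiber integration through each $\Phi_i$ yields
\[
\|h\|_{L^2(M_2)}^2 = \bigl(1+O(\varepsilon')\bigr)\,|W\smallsetminus(\partial W)^{r+R}|\,\|f\|_{L^2_{\sqrt V}(M_1)}^2,
\]
\[
\|(\Delta-\lambda)h\|_{L^2(M_2)}^2 = \bigl(1+O(\varepsilon')\bigr)\,|W\smallsetminus(\partial W)^{r+R}|\,\|(L-\lambda)f\|_{L^2_{\sqrt V}(M_1)}^2 + O(|(\partial W)^{2r}|),
\]
with implicit constants depending on $f$ but independent of $W$. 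Since $|(\partial W)^{2r}|<\varepsilon'|W\smallsetminus(\partial W)^{r+R}|$, the ratio on the left differs from the one on the right by $O(\varepsilon')$; a sufficiently small choice of $\varepsilon'$ produces the desired bound.

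The main technical obstacle is the patching between charts: different sections give fiber cut-offs that disagree by right translation, so the bulk where $h$ agrees with $\tilde f$ shrinks from $W\smallsetminus(\partial W)^r$ to $W\smallsetminus(\partial W)^{r+R}$. Making this $R$-collar negligible requires simultaneously (a) the Følner-type sets from Corollary \ref{unim and amen domain}, which is where both unimodularity and amenability enter; (b) the isometry of left translation, so that the right-translates produced by the cocycle $\sigma_{ij}$ move points a uniformly bounded distance; and (c) the uniform gradient and Laplacian bounds for the $(\chi_E)_{s_i}$ from Lemma \ref{uniform estimate}, which is precisely the reason for the construction in Subsection \ref{Partition of Unity}.
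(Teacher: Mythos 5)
Your proposal is correct and follows essentially the same route as the paper: the same test function $h=\sum_i \tilde\varphi_i\tilde f\,(\chi_E)_{s_i}$ built from extensible sections, the Følner-type sets of Proposition \ref{Cut-Off}, the uniform bounds of Lemma \ref{uniform estimate}, and the section-independence of the fiber volume element (Corollary \ref{glob defined V}, where unimodularity enters), with the error isolated on a collar of $\partial W$ whose relative volume is made small. The only differences are bookkeeping ones (your bulk $W\smallsetminus(\partial W)^{r+R}$ with $r>R$ versus the paper's $W\smallsetminus(\partial W)^{2r}$ with $r$ bounding the transition maps, and your explicit product-rule expansion versus the paper's global bound $|(\Delta-\lambda)h|\le C$).
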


\begin{proof}
Cover $\supp f$ with finitely many open, precompact domains $U_{i}$ that admit extensible sections $s_{i} \colon U_{i} \to M_{2}$, $i=1, \dots, k$, and choose non-negative $\varphi_{i} \in C^{\infty}_{c}(U_{i})$ with $\sum_{i=1}^{k} \varphi_{i} = 1$ in $\supp f$. Denote by $x_{ij} \colon U_{i} \cap U_{j} \to G$ the transition maps defined by $s_{j}(y) = x_{ij}(y)s_{i}(y)$ for all $y \in U_{i} \cap U_{j}$, and by $\Phi_{i} \colon G \times U_{i} \to p^{-1}(U_{i})$ the diffeomorphisms defined by $\Phi_{i}(x,y) = x s_{i}(y)$.

Fix a left-invariant metric $g$ on $G$. Since $U_{i}$ is precompact and $s_{i}$ is extensible, notice that there exists $r > 0$ such that $x_{ij}(U_{i} \cap U_{j}) \subset B_{g}(e,r)$ for any $i,j=1,\dots,k$. Let $\{\zeta_{n}\}_{n \in \mathbb{N}}$ be a partition of unity on $G$ corresponding to $r/2$, as in Subsection \ref{Partition of Unity}. For a finite subset $E$ of $\mathbb{N}$, consider the compactly supported, smooth function
\[
h_{i} := (\chi_{E})_{s_{i}} \tilde{\varphi}_{i} \tilde{f}
\]
in $p^{-1}(U_{i})$, $i=1 , \dots , k$. Setting $h = \sum_{i=1}^{k} h_{i}$, we derive from Lemma \ref{uniform estimate}, that there exists a constant $C$ independent from $E$, such that $| (\Delta - \lambda) h (z) | \leq C$ for any $z \in M_{2}$.

We know from Proposition \ref{Cut-Off} that for any $\varepsilon > 0$, there exists an open, bounded $W \subset G$ and a finite $E \subset \mathbb{N}$, such that $\chi_{E} = 1$ in $W \smallsetminus (\partial W)^{r}$, $\supp \chi_{E} \subset W^{r/2}$ and
\begin{equation}\label{volume estimate}
\frac{|W_{0}^{\prime}|_{g}}{|W_{0}|_{g}} < \frac{\varepsilon \| f \|_{L^{2}_{\sqrt{V}}}^{2}}{C^{2} \int_{\supp f} V},
\end{equation}
where $W_{0} := W \smallsetminus (\partial W)^{2r}$, $W_{0}^{\prime} := (\partial W)^{2r}$ and tubular neighborhoods are considered with respect to the background metric $g$. Denote by $W_{i}(y)$ and $W_{i}^{\prime}(y)$ the images of $W_{0}$ and $W_{0}^{\prime}$ via $\Phi_{i}(\cdot, y)$, respectively.
Bearing in mind that
\[
\Phi_{i}(x,y) = \Phi_{j}(xx_{ji}(y) , y)
\]
for any $y \in U_{i} \cap U_{j}$ and $x \in G$, it is not difficult to see that $h(z) = \tilde{f}(z)$ for any $z \in W_{i}(y)$ and that $\supp h \cap F_{y} \subset W_{i}(y) \cup W_{i}^{\prime}(y)$ for any $y \in U_{i}$, $i=1,\dots,k$. In view of Corollary \ref{glob defined V}, it is now simple to compute
\begin{eqnarray}
\| h \|_{L^{2}}^{2} &=& \sum_{i=1}^{k} \int_{M_{2}} \tilde{\varphi}_{i}h^{2} \geq \sum_{i=1}^{k} \int_{U_{i}} \int_{W_{i}(y)} \tilde{\varphi}_{i}h^{2} dy = \sum_{i = 1}^{k} \int_{U_{i}} \varphi_{i}(y) f^{2}(y)|W_{0}|_{g_{s_{i}(y)}} dy \nonumber \\
&=& |W_{0}|_{g} \sum_{i = 1}^{k} \int_{U_{i}} \varphi_{i} f^{2} V = |W_{0}|_{g} \| f \|_{L^{2}_{\sqrt{V}}}^{2} . \nonumber
\end{eqnarray}
Furthermore, it is apparent that
\begin{eqnarray}
\| (\Delta - \lambda) h \|_{L^{2}(M_{2})}^{2} &=& \sum_{i = 1}^{k} \int_{M_{2}} \tilde{\varphi}_{i} ((\Delta - \lambda) h)^{2} \nonumber \\
&=& \sum_{i= 1}^{k} \left( \int_{U_{i}} \int_{W_{i}(y)} \tilde{\varphi}_{i} ((\Delta - \lambda) h)^{2} dy + \int_{U_{i}} \int_{W_{i}^{\prime}(y)} \tilde{\varphi}_{i} ((\Delta - \lambda) h)^{2} dy\right). \nonumber
\end{eqnarray}
By virtue of Corollary \ref{glob defined V} and formula (\ref{lift eq}), we deduce that
\begin{eqnarray}
\sum_{i= 1}^{k} \int_{U_{i}} \int_{W_{i}(y)} \tilde{\varphi}_{i} ((\Delta - \lambda) h)^{2} dy &=& \sum_{i= 1}^{k} \int_{U_{i}} \int_{W_{i}(y)} \tilde{\varphi}_{i} ((\Delta - \lambda) \tilde{f})^{2} dy \nonumber \\
&=& \sum_{i= 1}^{k} \int_{U_{i}} \varphi_{i}(y) ((L - \lambda) f(y))^{2} |W_{0}|_{g_{s_{i}(y)}} dy \nonumber \\
&=& |W_{0}|_{g} \int_{M_{1}} ((L-\lambda)f)^{2} V \nonumber\\
&=& |W_{0}|_{g} \| (L - \lambda)f \|_{L^{2}_{\sqrt{V}}(M_{1})}^{2} . \nonumber
\end{eqnarray}
Finally, Corollary \ref{glob defined V} implies that
\[
\sum_{i=1}^{k}\int_{U_{i}}\int_{W_{i}^{\prime}(y)} \tilde{\varphi}_{i} ((\Delta - \lambda) h)^{2} \leq C^{2} \sum_{i=1}^{k} \int_{ \supp f \cap U_{i} } \varphi_{i}(y) |W^{\prime}_{0}|_{g_{s_{i}(y)}} dy 
= C^{2} |W^{\prime}_{0}|_{g} \int_{\supp f} V. 
\]
From the above estimates, we conclude that
\[
\frac{\| (\Delta - \lambda)h \|_{L^{2}(M_{2})}^{2}}{\| h \|_{L^{2}(M_{2})}^{2}} \leq \frac{\| (L - \lambda)f \|_{L^{2}_{\sqrt{V}}(M_{1})}^{2}}{\| f \|_{L^{2}_{\sqrt{V}}(M_{1})}^{2}} + \frac{|W_{0}^{\prime}|_{g}}{|W_{0}|_{g}} \frac{C^{2} \int_{\supp f} V}{\| f \|_{L^{2}_{\sqrt{V}}}^{2}},
\]
which, together with (\ref{volume estimate}), completes the proof. \qed
\end{proof}\medskip

Similarly, exploiting the second inequality of Lemma \ref{uniform estimate} instead of the first one, it is not hard to show the following:

\begin{proposition}\label{weaker pulling up}
Let $p \colon M_{2} \to M_{1}$ be a Riemannian submersion arising from the action of a non-compact, connected, unimodular and amenable Lie group $G$. Then for any $\varepsilon > 0$ and $f \in C^{\infty}_{c}(M_{1}) \smallsetminus \{0\}$, there exists $h \in C^{\infty}_{c}(M_{2}) \smallsetminus \{0\}$ such that $\mathcal{R}(h) \leq \mathcal{R}_{L}(f) + \varepsilon$.
\end{proposition}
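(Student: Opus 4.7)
The plan is to mirror the argument for Proposition \ref{Pulling Up}, replacing the norm $\|(\Delta-\lambda)h\|_{L^{2}}$ by the Dirichlet integral $\int \|\grad h\|^{2}$ and using Lemma \ref{uniform estimate}(ii) in place of Lemma \ref{uniform estimate}(i). Concretely, I would cover $\supp f$ by finitely many precompact open domains $U_{i}$ admitting extensible sections $s_{i} \colon U_{i} \to M_{2}$, pick a subordinate smooth partition of unity $\{\varphi_{i}\}_{i=1}^{k}$ on $\supp f$, fix a left-invariant metric $g$ on $G$, choose $r>0$ so that every transition map $x_{ij}$ takes values in $B_{g}(e,r)$, and take a partition of unity $\{\zeta_{n}\}_{n\in\mathbb{N}}$ on $G$ corresponding to $r/2$. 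For a finite subset $E \subset \mathbb{N}$ to be chosen later via Proposition \ref{Cut-Off}, I set $h_{i} := (\chi_{E})_{s_{i}} \tilde{\varphi}_{i} \tilde{f}$ and $h := \sum_{i=1}^{k} h_{i} \in C_{c}^{\infty}(M_{2})$.

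Next I split the integrals defining $\mathcal{R}(h)$ fiberwise through the diffeomorphisms $\Phi_{i}$, using that $\supp h \cap F_{y} \subset W_{i}(y) \cup W_{i}'(y)$ for $y \in U_{i}$. On $W_{i}(y)$ one has $\chi_{E}=1$ on a consistent region so that $h = \tilde{f}$, hence by Lemma \ref{lift}(i) and the isometric nature of horizontal lifts, $\|\grad h\|^{2} = \|\grad f\|^{2} \circ p$ there. On $W_{i}'(y)$, the identity
\[
\grad h_{i} = \tilde{\varphi}_{i}\tilde{f}\, \grad (\chi_{E})_{s_{i}} + (\chi_{E})_{s_{i}}\, \widetilde{\grad(\varphi_{i} f)}
\]
combined with Lemma \ref{uniform estimate}(ii) and the fact that $\widetilde{\grad(\varphi_{i}f)}$ is bounded independently of $E$ yields a constant $C$, depending on $f$ and the sections but not on $E$, with $\|\grad h\| \leq C$ on all fibers.

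Applying Corollary \ref{glob defined V} to convert fiber integrals into integrals on $M_{1}$ against $V$, these two estimates give
\[
\|h\|_{L^{2}(M_{2})}^{2} \geq |W_{0}|_{g}\, \|f\|_{L^{2}_{\sqrt{V}}(M_{1})}^{2}, \qquad \int_{M_{2}} \|\grad h\|^{2} \leq |W_{0}|_{g} \int_{M_{1}} \|\grad f\|^{2} V + C^{2} |W_{0}'|_{g} \int_{\supp f} V.
\]
Dividing and recalling that $\mathcal{R}_{L}(f) = \int_{M_{1}} \|\grad f\|^{2} V / \int_{M_{1}} f^{2} V$, we obtain
\[
\mathcal{R}(h) \leq \mathcal{R}_{L}(f) + \frac{|W_{0}'|_{g}}{|W_{0}|_{g}} \cdot \frac{C^{2} \int_{\supp f} V}{\|f\|_{L^{2}_{\sqrt{V}}(M_{1})}^{2}}.
\]
By Proposition \ref{Cut-Off} we may choose $W$, and hence the finite set $E$, so that the ratio $|W_{0}'|_{g}/|W_{0}|_{g}$ is arbitrarily small, making the error term at most $\varepsilon$.

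The argument is essentially parallel to that of Proposition \ref{Pulling Up}, so no real obstacle appears; the only delicate point is to verify that the part of $\grad h$ that depends on $E$ is controlled uniformly, which is exactly what Lemma \ref{uniform estimate}(ii) supplies, while the remaining contribution $\widetilde{\grad(\varphi_{i}f)}$ is $E$-independent by construction.
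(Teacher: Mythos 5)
Your proposal is correct and follows exactly the route the paper intends: the paper proves Proposition \ref{weaker pulling up} by repeating the argument of Proposition \ref{Pulling Up} with the Dirichlet integral $\int_{M_{2}}\|\grad h\|^{2}$ in place of $\|(\Delta-\lambda)h\|_{L^{2}(M_{2})}^{2}$, using the gradient estimate of Lemma \ref{uniform estimate} instead of the Laplacian estimate, which is precisely what you do. The fiberwise splitting over $W_{i}(y)$ and $W_{i}'(y)$, the identity $h=\tilde{f}$ on $\Phi_{i}(W_{0}\times U_{i})$, the $E$-independent bound on $\grad h$, and the final choice of $W$ via Proposition \ref{Cut-Off} all match the paper's construction.
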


Before proceeding to the proof of Theorem \ref{submersion group thm}, we establish a part of it in the case where $G$ is a connected Lie group.

\begin{proposition}\label{main connected}
Let $p \colon M_{2} \to M_{1}$ be a Riemannian submersion arising from the action of a connected Lie group $G$. If $G$ is unimodular and amenable, then $\lambda_{0}(M_{2}) = \lambda_{0}(S)$. If, in addition, $M_{1}$ is complete, then $\sigma(S) \subset \sigma(M_{2})$.
\end{proposition}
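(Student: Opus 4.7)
\emph{Proof proposal.} The plan is to split according to whether $G$ is compact. If $G$ is compact, then the fibers of $p$ are closed (diffeomorphic to $G$) and of basic mean curvature, so Corollary \ref{closed basic} immediately gives $\lambda_{0}(M_{2}) = \lambda_{0}(S)$. When in addition $M_{1}$ is complete, $M_{2}$ is complete too by Corollary \ref{complete}, and the inclusion $\sigma(S) \subset \sigma(M_{2})$ follows from \cite[Theorem 1.2]{Mine4}, as already noted in the text.

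So assume $G$ is non-compact. The first observation is that each fiber $F_{x}$ is isometric to $G$ equipped with a left-invariant metric, by Proposition \ref{induced metric}; combined with Theorem \ref{unim and amen} this forces $\lambda_{0}(F_{x}) = 0$ for every $x \in M_{1}$. Theorem \ref{basic mean curv thm} then reads $\lambda_{0}(M_{2}) \geq \lambda_{0}(S)$. For the reverse inequality I pass to the renormalization $L = m_{\sqrt{V}}^{-1} \circ S \circ m_{\sqrt{V}}$ introduced just before Proposition \ref{Pulling Up}; since $m_{\sqrt{V}}$ is an isometric isomorphism intertwining $L$ and $S$, we have $\lambda_{0}(L) = \lambda_{0}(S)$, and Lemma \ref{renormalized comp}(i) together with Proposition \ref{bottom} yields $\lambda_{0}(L) = \inf_{f} \mathcal{R}_{L}(f)$ over non-zero $f \in C^{\infty}_{c}(M_{1})$. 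Given $\varepsilon > 0$, take such an $f$ with $\mathcal{R}_{L}(f) < \lambda_{0}(S) + \varepsilon$ and apply Proposition \ref{weaker pulling up} to obtain $h \in C^{\infty}_{c}(M_{2}) \smallsetminus \{0\}$ with $\mathcal{R}(h) \leq \mathcal{R}_{L}(f) + \varepsilon$. Proposition \ref{bottom} gives $\lambda_{0}(M_{2}) \leq \lambda_{0}(S) + 2\varepsilon$, and letting $\varepsilon \to 0$ closes the equality.

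For the second assertion (with $M_{1}$ complete), Corollary \ref{complete} yields completeness of $M_{2}$, so both $\Delta$ and $S$ are essentially self-adjoint and Proposition \ref{characteristic seq} characterizes their spectra via characteristic sequences. Given $\lambda \in \sigma(S)$, pick a characteristic sequence $(\varphi_{n}) \subset C^{\infty}_{c}(M_{1}) \smallsetminus \{0\}$ for $S$ and $\lambda$, and set $f_{n} := \varphi_{n} / \sqrt{V}$. Lemma \ref{renormalized comp}(ii) rewrites
\[
\frac{\| (L - \lambda) f_{n} \|_{L^{2}_{\sqrt{V}}(M_{1})}}{\| f_{n} \|_{L^{2}_{\sqrt{V}}(M_{1})}} = \frac{\| (S - \lambda) \varphi_{n} \|_{L^{2}(M_{1})}}{\| \varphi_{n} \|_{L^{2}(M_{1})}} \to 0.
\]
Applying Proposition \ref{Pulling Up} to each $f_{n}$ with tolerance $\varepsilon_{n} = 1/n$ produces $h_{n} \in C^{\infty}_{c}(M_{2}) \smallsetminus \{0\}$ with $\| (\Delta - \lambda) h_{n} \|^{2} / \| h_{n} \|^{2} \to 0$, so $(h_{n})$ is a characteristic sequence for $\Delta$ and $\lambda$, and $\lambda \in \sigma(M_{2})$ by Proposition \ref{characteristic seq}.

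All the substantial analytic content is already packaged in Propositions \ref{Pulling Up} and \ref{weaker pulling up} together with the spectral characterization of unimodular amenable groups in Theorem \ref{unim and amen}; the work here is essentially organizational. The only genuine care needed is bookkeeping the three ambient Hilbert spaces at play---$L^{2}(M_{1})$, $L^{2}_{\sqrt{V}}(M_{1})$, and $L^{2}(M_{2})$---so that the transfer from a characteristic sequence for $S$ to one for $L$ and then up to one for $\Delta$ preserves the vanishing of the relevant normalized residuals.
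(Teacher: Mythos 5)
Your proposal is correct and follows essentially the same route as the paper: compact case via Corollary \ref{closed basic}, Corollary \ref{complete} and \cite[Theorem 1.2]{Mine4}; non-compact case via the renormalization $L$ of $S$ with respect to $\sqrt{V}$, Proposition \ref{weaker pulling up} for the bottom of the spectrum, and Proposition \ref{Pulling Up} with Lemma \ref{renormalized comp} and Proposition \ref{characteristic seq} for the spectral inclusion. The only (harmless) cosmetic difference is that you make the lower bound $\lambda_{0}(M_{2}) \geq \lambda_{0}(S)$ and the identity $\lambda_{0}(L) = \lambda_{0}(S)$ explicit, whereas the paper invokes these implicitly through Theorem \ref{basic mean curv thm} and Lemma \ref{renormalized comp}.
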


\begin{proof}
According to Corollary \ref{closed basic}, if $G$ is compact, then $\lambda_{0}(M_{2}) = \lambda_{0}(S)$. If, in addition, $M_{1}$ is complete, then Corollary \ref{complete} asserts that so is $M_{2}$, and the second statement is a consequence of \cite[Theorem 1.2]{Mine4}.

Suppose now that $G$ is non-compact, unimodular and amenable. Then for any $\varepsilon > 0$, there exists a non-zero $f \in C^{\infty}_{c}(M_{1})$ such that $\mathcal{R}_{S}(f) < \lambda_{0}(S) + \varepsilon/2$, by Proposition \ref{bottom}. From Propositions \ref{renormalized comp} and \ref{weaker pulling up}, it follows that there exists a non-zero $h \in C^{\infty}_{c}(M_{2})$ with
\[
\mathcal{R}(h) \leq \mathcal{R}_{L}(f/\sqrt{V}) + \varepsilon/2 = \mathcal{R}_{S}(f) + \varepsilon / 2 < \lambda_{0}(S) + \varepsilon.
\]
The proof of the first assertion is completed by Proposition \ref{bottom}, $\varepsilon > 0$ being arbitrary.

Assume now that, in addition, $M_{1}$ is complete and notice that $M_{2}$ is also complete, from Corollary \ref{complete}. Then Proposition \ref{characteristic seq} yields that for any $\lambda \in \sigma(S)$ there exists a characteristic sequence $(f_{n})_{n \in \mathbb{N}}$ for $S$ and $\lambda$. In view of Proposition \ref{Pulling Up} and Lemma \ref{renormalized comp}, for any $n \in \mathbb{N}$, there exists $h_{n}\in C^{\infty}_{c}(M_{2}) \smallsetminus \{0\}$ satisfying
\[
\frac{\| (\Delta - \lambda)h_{n} \|_{L^{2}(M_{2})}^{2}}{\| h_{n} \|_{L^{2}(M_{2})}^{2}} \leq \frac{\| (L - \lambda)(f/\sqrt{V}) \|_{L^{2}_{\sqrt{V}}(M_{1})}^{2}}{\| f / \sqrt{V} \|_{L^{2}_{\sqrt{V}}(M_{1})}^{2}} + \frac{1}{n} = \frac{\| (S - \lambda)f \|_{L^{2}(M_{1})}^{2}}{\| f \|_{L^{2}(M_{1})}^{2}} + \frac{1}{n} \rightarrow 0,
\]
as $n \rightarrow +\infty$. That is, $(h_{n})_{n \in \mathbb{N}}$ is a characteristic sequence for $\Delta$ (on $M_{2}$) and $\lambda$, and hence, $\lambda \in \sigma(M_{2})$, from Proposition \ref{characteristic seq}.\qed
\end{proof}\medskip

Consider now a Riemannian submersion $p \colon M_{2} \to M_{1}$ arising from the action of a Lie group $G$. Denote by $p_{1} \colon M_{2} \to M$ the Riemannian submersion arising from the action of the connected component $G_{0}$ of $G$. Then the action of $G$ on $M_{2}$ descends to a properly discontinuous action of $G/G_{0}$ on $M$, which gives rise to a Riemannian covering $p_{2} \colon M \to M_{1}$, and the original submersion is decomposed as $p = p_{2} \circ p_{1}$. It is immediate to verify that the Schr\"{o}dinger operator
\[
S_{M} :=\Delta + \frac{1}{4} \| p_{1*} H \|^{2} - \frac{1}{2} \diver p_{1*}H
\]
on $M$, defined as in (\ref{operator}), is the lift of the corresponding Schr\"{o}dinger operator $S$ on $M_{1}$.
\medskip

\noindent\emph{Proof of Theorem \ref{submersion group thm}:} Write $p = p_{2} \circ p_{1}$ as above, and suppose that $G$ is amenable and $G_{0}$ is unimodular. Then Lemma \ref{short exact} states that $G_{0}$ and $G/G_{0}$ are also amenable. From Proposition \ref{main connected} and \cite[Theorem 1.2]{BMP1}, we obtain that
\[
\lambda_{0}(M_{2}) = \lambda_{0}(S_{M}) = \lambda_{0}(S).
\]
If, in addition, $M_{1}$ is complete, then so is $M$, and the spectra are related by
\[
\sigma(S) \subset \sigma(S_{M}) \subset \sigma(M_{2}),
\]
where the first inclusion follows from \cite[Corollaries 4.21 and 4.22]{Mine} and the second one from Proposition \ref{main connected}.

Conversely, assume that $\lambda_{0}(M_{2}) = \lambda_{0}(S) \notin \sigma_{\ess}(S)$. By virtue of Theorem \ref{basic mean curv thm}, we have that $\lambda_{0}(F_{x}) = 0$ for almost any $x \in M_{1}$. Recall that $F_{x}$ is isometric to $G$ endowed with a left-invariant metric, from Lemma \ref{induced metric}. Taking into account that $\lambda_{0}(G) = \lambda_{0}(G_{0})$, we derive from Theorem \ref{unim and amen} that $G_{0}$ is unimodular and amenable. Moreover, Theorem \ref{basic mean curv thm} and \cite[Theorem 1.1]{BMP1} show that
\[
\lambda_{0}(M_{2}) \geq \lambda_{0}(S_{M}) \geq \lambda_{0}(S),
\]
and thus, $\lambda_{0}(S_{M}) = \lambda_{0}(S)$. Since $\lambda_{0}(S) \notin \sigma_{\ess}(S)$, we conclude from \cite[Theorem 1.2]{Mine2} that $p_{2}$ is an amenable covering, or equivalently, $G/G_{0}$ is amenable. The proof is completed by Lemma \ref{short exact}.
\qed\medskip

\noindent\emph{Proof of Corollary \ref{closed cor}:} Suppose first that $G$ is unimodular and amenable, and fix a left-invariant metric on $G$. By formula (\ref{special Rayleigh}), it is easily checked that $\mathcal{R}_{S}(\sqrt{V}) = 0$ for the positive $V \in C^{\infty}(M_{1})$ from Corollary \ref{glob defined V}, which together with Theorem \ref{submersion group thm}, Proposition \ref{bottom} and Lemma \ref{non-negative def}, implies that $\lambda_{0}(M_{2}) = \lambda_{0}(S) = 0$.

Conversely, assume that $\lambda_{0}(M_{2}) = 0$ and write $p = p_{1} \circ p_{2}$ as above. We readily see from Theorem \ref{basic mean curv thm} that $\lambda_{0}(S) = 0$. Then $G$ is amenable and $G_{0}$ is unimodular, from Theorem \ref{submersion group thm}, because $\lambda_{0}(S) \notin \sigma_{\ess}(S)$, $M_{1}$ being closed.
We know from Corollary \ref{glob defined V} that there exists $V \in C^{\infty}(M)$ with $p_{1*}H = - \grad \ln V$, such that for any section $s \colon U \subset M \to M_{2}$, the volume elements of the induced metrics on $G_{0}$ (and on $G$) satisfy
\[
d {\vol}_{g_{s(y)}} = V(y) d {\vol}_{g},
\]
where $g$ is a fixed left-invariant metric on $G$.

Observe that there exists a positive $\varphi \in C^{\infty}(M_{1})$ with $S \varphi = 0$, from \cite[Proposition 3.7]{Mine2} and the fact that $\lambda_{0}(S) = 0 \notin \sigma_{\ess}(S)$. Then $\mathcal{R}_{S}(\varphi) = 0$, which together with formula (\ref{special Rayleigh}), gives that $p_{*}H = - 2 \grad \ln \varphi$. It is now clear that $V$ is a constant multiple of the lift $\tilde{\varphi}^{2}$ of $\varphi^{2}$ on $M$, and in particular, $G/G_{0}$-invariant.

Given $z \in M_{2}$ and $x \in G$, using formula (\ref{volume eq}), the definition and the $G/G_{0}$-invariance of $V$, we compute
\[
R_{x}^{*} (d {\vol}_{g_{z}}) = d {\vol}_{g_{xz}} = V(p_{1}(xz))  d {\vol}_{g} = V([x]p_{1}(z))  d {\vol}_{g} = V(p_{1}(z))  d {\vol}_{g} =  d {\vol}_{g_{z}},
\]
where $[x]$ stands for the class of $x$ in $G/G_{0}$. Therefore, $g_{z}$ is a left-invariant metric on $G$ whose volume element is right-invariant, which means that $G$ is unimodular.
\qed\medskip

We end this section with a class of examples demonstrating that the assumption $\lambda_{0}(S) \notin \sigma_{\ess}(S)$ in Theorem \ref{submersion group thm}(iii) cannot be dropped, even if the manifolds are complete and the fibers are minimal.

\begin{example}
Let $G$ be the simply connected Lie group with Lie algebra spanned by two vectors $X,Y$ such that $[X,Y] = Y$. Given $c>0$, define the left-invariant metric $g_{c}$ on $G$ by $g_{c}(X,X) = c^{-1}$, $g_{c}(X,Y) = 0$ and $g_{c}(Y,Y) = c$. It is obvious that
\[
\langle \nabla_{X_{1}} X_{2} , X_{3} \rangle = \frac{1}{2} \left( \langle [X_{1},X_{2}] , X_{3} \rangle - \langle [X_{2},X_{3}] , X_{1} \rangle + \langle [X_{3},X_{1}] , X_{2} \rangle \right)
\]
for any left-invariant vector fields $X_{1},X_{2},X_{3}$ on $G$, where the inner products are with respect to $g_{c}$ and $\nabla$ stands for the Levi-Civita connection of $g_{c}$. From this, it is easy to see that $(G,g_{c})$ has constant sectional curvature $-c$. Thus, $(G,g_{c})$ is isometric to the 2-dimensional space form of sectional curvature $-c$, and in particular, the bottom of its spectrum is given by
\begin{equation}\label{sectional}
\lambda_{0}(G,g_{c}) = \frac{c^{2}}{4}.
\end{equation}
Bearing in mind that $G$ is solvable, observe that $G$ is not unimodular, from Theorem \ref{unim and amen}.

Let $M$ be a Riemannian manifold with $\lambda_{0}(M) \in \sigma_{\ess}(M)$. For a positive function $\psi \in C^{\infty}(M)$, endow the product manifold $M_{2} := M \times G$ with the Riemannian metric $g(y,x) = g_{M}(y) \times g_{\psi(y)}(x)$. It is evident that $G$ acts smoothly, freely and properly via isometries on $M_{2}$ and the Riemannian submersion arising from this action is the projection to the first factor $p \colon M_{2} \to M$. It is noteworthy that $p$ has minimal fibers, since the volume element of $g_{c}$ does not depend on $c$. Hence, the operator $S$ defined as in (\ref{operator}) coincides with the Laplacian on $M$.

By Proposition \ref{bottom essential spectrum}, there exists a sequence $(f_{n})_{n \in \mathbb{N}} \subset C^{\infty}_{c}(M) \smallsetminus \{0\}$ such that $\mathcal{R}(f_{n}) \rightarrow \lambda_{0}^{\ess}(M) = \lambda_{0}(M)$ and $\supp f_{n} \subset U_{n}$ for some precompact, open domains $U_{n}$ with $\bar{U}_{n}$ pairwise disjoint. Clearly, we may choose a positive $\psi \in C^{\infty}(M)$ with $\psi= c_{n} < 1/n$ in $U_{n}$ for any $n \in \mathbb{N}$. Then $p^{-1}(U_{n})$ is isometric to the Riemannian product $U_{n} \times G$, where $G$ is endowed with $g_{c_{n}}$. In view of Proposition \ref{bottom} and formula (\ref{sectional}), it follows that for any $n \in \mathbb{N}$ there exists $h_{n} \in C^{\infty}_{c}(G) \smallsetminus \{0\}$ with $\mathcal{R}_{g_{c_{n}}}(h_{n}) < 1/(4n^{2})$. Setting $\tilde{h}_{n}(y,x) = h_{n}(x)$ and $\tilde{f}_{n}(y,x) = f_{n}(y)$, we have that $\tilde{h}_{n} \tilde{f}_{n} \in C^{\infty}_{c}(M_{2})$ and a 
straightforward calculation implies that
\[
\mathcal{R}(\tilde{h}_{n} \tilde{f}_{n}) = \mathcal{R}_{g_{c_{n}}}(h_{n}) + \mathcal{R}(f_{n}) \rightarrow \lambda_{0}(M),
\]
as $n \rightarrow +\infty$. From this, together with Theorem \ref{basic mean curv thm} and Proposition \ref{bottom}, we deduce that $\lambda_{0}(M_{2}) = \lambda_{0}(M) = \lambda_{0}(S)$, while $G$ is not unimodular.
\end{example}

\section{Bottom of spectrum of Lie groups}\label{Lie group sec}

In this section, we discuss some applications of our results to Lie groups. We begin by establishing Theorem \ref{Lie group thm}. \medskip

\noindent\emph{Proof of Theorem \ref{Lie group thm}:} Apparently, the projection $p \colon G \to G/N$ is the Riemannian submersion arising from the (left) action of $N$ on $G$, and the fiber over $p(z)$ is $F_{p(z)} = Nz = zN$ for any $z \in G$, $N$ being normal. Since multiplication $L_{x}$ from the left with an element $x \in G$ maps isometrically $F_{p(z)}$ to $F_{p(xz)}$ for any $z \in G$, it is evident that the mean curvature $H$ of the fibers is left-invariant, and so is $p_{*}H$ on $G/N$. Then the operator $S$ on $G/N$ defined as in (\ref{operator}), is of the form $S = \Delta + c$ for some $c \in \mathbb{R}$, and the bottom of its spectrum is $\lambda_{0}(S) = \lambda_{0}(G/N) + c$.

To determine this constant, let $\{X_{i}\}_{i=1}^{m}$ be an orthonormal basis of $T_{e}G$ with $\{X_{i}\}_{i=1}^{k}$ spanning $T_{e}N$. Considering the left-invariant extension of $X_{i}$ (also denoted by $X_{i}$), it is easily checked that
\begin{eqnarray}\label{mean curv}
\| H \|^{2} &=& \sum_{i=1}^{k} \langle \nabla_{X_{i}} X_{i} ,H \rangle = - \sum_{i=1}^{k} \langle \nabla_{X_{i}} H ,X_{i} \rangle 
=  - \sum_{i=1}^{m} \langle \nabla_{X_{i}} H ,X_{i} \rangle + \sum_{i=k+1}^{m} \langle \nabla_{X_{i}} H ,X_{i} \rangle \nonumber \\
&=&  \sum_{i=1}^{m} \langle [ H,X_{i} ] ,X_{i} \rangle + \sum_{i=k+1}^{m} \langle \nabla_{p_{*}X_{i}} p_{*}H ,p_{*}X_{i} \rangle = \tr (\ad H) + \diver p_{*}H,
\end{eqnarray}
and the operator $S$ is written as
\[
S = \Delta - \frac{1}{4} \| H \|^{2} + \frac{1}{2} \tr (\ad H).
\]

The first statement now follows from Theorem \ref{basic mean curv thm}, after noticing that $\lambda_{0}(F_{y}) = \lambda_{0}(N)$ for any $y \in G/N$, $F_{y}$ being isometric to $N$. If $N$ is unimodular and amenable, then Theorem \ref{submersion group thm} establishes the asserted equality. Conversely, as a consequence of Theorem \ref{basic mean curv thm}, if
\[
\lambda_{0}(G) = \lambda_{0}(G/N) -\frac{1}{4} \| H \|^{2} + \frac{1}{2} \tr (\ad H),
\]
then the infimum of $\lambda_{0}(F_{y})$ with $y \in G/K$, is zero. Then $\lambda_{0}(N) = 0$, since $F_{y}$ is isometric to $N$ (endowed with the induced left-invariant metric from $G$), and Theorem \ref{unim and amen} yields that $N$ is unimodular and amenable. \qed \medskip

It is worth to point out that in the above setting, the assumption $\lambda_{0}(S) \notin \sigma_{\ess}(S)$ involved in Theorem \ref{submersion group thm}(iii) is not satisfied in general. Indeed, if $G/N$ is non-compact, then $\sigma(S) = \sigma_{\ess}(S)$, $S$ being invariant under multiplication from the left with elements of $G/N$ (cf. for instance \cite[Theorem 5.2]{Mine}). However, the conclusion of Theorem \ref{submersion group thm}(iii) holds because the fibers are isometric. 

\begin{corollary}
Let $G$ be a connected, unimodular and amenable Lie group endowed with a left-invariant metric and $N$ be a closed (as a subset), connected, normal subgroup of $G$ with mean curvature $H$. Then
\[
\lambda_{0}(G/N) = \frac{1}{4} \| H \|^{2}.
\]
In particular, $G/N$ is also unimodular (and amenable) if and only if $N$ is minimal.
\end{corollary}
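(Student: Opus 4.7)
The plan is to reduce the corollary to the equality statement of Theorem \ref{Lie group thm} after verifying that all of its hypotheses are met. First I would observe that the two quantities appearing on the right-hand side of the equality in Theorem \ref{Lie group thm} can be computed directly from the hypotheses on $G$: since $G$ is unimodular and amenable, Theorem \ref{unim and amen} gives $\lambda_{0}(G) = 0$, and Lemma \ref{unimodular} gives $\tr(\ad X) = 0$ for every $X$ in the Lie algebra $\mathfrak{g}$ of $G$; applying this to $H$ (viewed as a left-invariant vector field, as established in the proof of Theorem \ref{Lie group thm}) yields $\tr(\ad H) = 0$.

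Next I would check that $N$ is itself unimodular and amenable, so that the equality case of Theorem \ref{Lie group thm} is applicable. Amenability of $N$ is immediate from Lemma \ref{short exact}. For unimodularity, let $\mathfrak{n}$ be the Lie algebra of $N$; since $N$ is normal, $\mathfrak{n}$ is an ideal of $\mathfrak{g}$, so for any $X \in \mathfrak{n}$ the operator $\ad X$ on $\mathfrak{g}$ sends all of $\mathfrak{g}$ into $\mathfrak{n}$. Choosing a basis of $\mathfrak{g}$ adapted to $\mathfrak{n} \subset \mathfrak{g}$, the matrix of $\ad X$ has a block form in which the part acting on the complementary subspace is zero, so $\tr_{\mathfrak{g}}(\ad X) = \tr_{\mathfrak{n}}(\ad X)$. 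Since the left-hand side vanishes by unimodularity of $G$, Lemma \ref{unimodular} shows that $N$ is unimodular.

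With these verifications in place, the equality in Theorem \ref{Lie group thm} reads
\[
0 = \lambda_{0}(G) = \lambda_{0}(G/N) - \tfrac{1}{4}\|H\|^{2} + \tfrac{1}{2}\tr(\ad H) = \lambda_{0}(G/N) - \tfrac{1}{4}\|H\|^{2},
\]
which yields $\lambda_{0}(G/N) = \frac{1}{4}\|H\|^{2}$, proving the main formula. For the final equivalence, note that $G/N$ is amenable by Lemma \ref{short exact}, so by Theorem \ref{unim and amen} the group $G/N$ is unimodular (and amenable) if and only if $\lambda_{0}(G/N) = 0$; by the formula just proved, this happens if and only if $H = 0$, that is, $N$ is minimal in $G$.

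The only non-routine point is the ideal-trace argument establishing that $N$ inherits unimodularity from $G$; everything else is an assembly of results already proved in the paper, so I do not expect any serious technical obstacle.
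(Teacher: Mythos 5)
Your proposal is correct and follows essentially the same route as the paper: deduce $\lambda_{0}(G)=0$ and $\tr(\ad H)=0$ from unimodularity and amenability of $G$, get unimodularity and amenability of $N$ (and amenability of $G/N$), and then apply the equality case of Theorem \ref{Lie group thm} together with Theorem \ref{unim and amen}. The only difference is that you spell out the ideal-trace block argument showing $N$ inherits unimodularity, a step the paper leaves implicit in its appeal to Lemma \ref{unimodular}.
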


\begin{proof}
Since $G$ is unimodular, we obtain from Lemma \ref{unimodular} that $\tr (\ad H) = 0$ and that $N$ is also unimodular.
According to Lemma \ref{short exact}, since $G$ is amenable, so are $N$ and $G/N$. The proof is completed by Theorems \ref{Lie group thm} and \ref{unim and amen}.\qed
\end{proof}\medskip

Recall that, in general, the quotient of a unimodular and amenable Lie group does not have to be unimodular. The next example demontrates this fact.

\begin{example}
Let $G$ be the simply connected, solvable Lie group with Lie algebra $\mathfrak{g}$ generated by $X,Y,Z$ satisfying $[X,Y] = Y$, $[X,Z] = - Z$ and $[Y,Z] = 0$. It is obvious that $\tr (\ad X^{\prime}) = 0$ for any $X^{\prime} \in \mathfrak{g}$, and we deduce from Lemma \ref{unimodular} that $G$ is unimodular. Let $N$ be the closed (as a subset), connected, normal subgroup of $G$ whose Lie algebra is the ideal generated by $Z$. Denoting by $p \colon G \to G/N$ the projection, it is elementary to verify that $\tr (\ad p_{*} X) = 1$. We conclude from Lemma \ref{unimodular} that $G/N$ is not unimodular, while $G$ is unimodular and amenable.
\end{example}

Before proceeding to the proof of Corollary \ref{bottom amen}, we need some auxiliary results. The next proposition provides a standard way of estimating the Cheeger constant of a Riemannian manifold.

\begin{proposition}\label{vector field}
	Let $X$ be a smooth vector field on a Riemannian manifold $M$ with $\| X \| \leq 1$ and $\diver X \geq c$ for some $c \in \mathbb{R}$. Then the Cheeger constant of $M$ is bounded by $h(M) \geq c$.
\end{proposition}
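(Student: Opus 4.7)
The plan is to prove the stated inequality via a direct application of the divergence theorem, comparing the volume of a compact domain with the area of its boundary by integrating $\diver X$.

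Concretely, I would fix an arbitrary smoothly bounded, compact domain $K \subset M$, and denote by $\nu$ the outward unit normal along $\partial K$. Applying the divergence theorem gives
\[
\int_{K} \diver X \, d\vol = \int_{\partial K} \langle X, \nu \rangle \, d\area.
\]
The lower bound $\diver X \geq c$ on the left-hand side yields $\int_{K} \diver X \, d\vol \geq c \, |K|$, while the pointwise bound $\|X\| \leq 1$ combined with the Cauchy-Schwarz inequality $|\langle X, \nu\rangle| \leq \|X\|\,\|\nu\| = \|X\| \leq 1$ gives
\[
\int_{\partial K} \langle X, \nu \rangle \, d\area \leq \int_{\partial K} d\area = |\partial K|.
\]
Combining these two estimates produces $c\,|K| \leq |\partial K|$, hence $|\partial K|/|K| \geq c$.

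Taking the infimum over all smoothly bounded, compact domains $K$ of $M$ yields $h(M) \geq c$, by definition of the Cheeger constant. There is essentially no obstacle here: the argument is a one-line consequence of the divergence theorem together with the two hypotheses on $X$, and requires no completeness or curvature assumptions on $M$.
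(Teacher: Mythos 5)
Your proposal is correct and is essentially identical to the paper's own argument: the divergence theorem on a smoothly bounded, compact domain $K$, the bound $c|K| \leq \int_{K} \diver X = \int_{\partial K} \langle X,\nu\rangle \leq |\partial K|$, and the infimum over all such $K$.
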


\begin{proof}
Using the divergence formula, for any smoothly bounded, compact domain $K$ of $M$, we compute
\[
c |K| \leq \int_{K} \diver X = \int_{\partial K} \langle X , \nu \rangle \leq | \partial K|,
\]
where $\nu$ is the outward pointing normal to $\partial K$. \qed
\end{proof}

\begin{corollary}\label{cheeger estimate}
Let $G$ be a connected Lie group endowed with a left-invariant metric. Then the Cheeger constant of $G$ satisfies
\[
h(G) \geq \max_{X \in \mathfrak{g}, \| X \| =1} \tr (\ad X).
\]
\end{corollary}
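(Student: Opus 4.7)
The natural approach is to invoke Proposition \ref{vector field} applied to a well-chosen left-invariant vector field on $G$. Fix $X \in \mathfrak{g}$ with $\|X\| = 1$ and extend it to a left-invariant vector field on $G$, also denoted $X$; by left-invariance of the metric, this extension satisfies $\|X\| \equiv 1$ pointwise. The remaining task is to identify its divergence with $\tr(\ad X)$ (up to a sign that we can absorb by negating $X$).

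To compute $\diver X$, I would choose a $g$-orthonormal basis $\{X_i\}_{i=1}^{m}$ of $\mathfrak{g}$ and pass to the corresponding left-invariant orthonormal frame. Because inner products of left-invariant vector fields are constant functions on $G$, the three derivative terms in the Koszul formula drop out, leaving
\[
2\langle \nabla_{X_i}X, X_i\rangle = \langle [X_i,X], X_i\rangle - \langle [X,X_i], X_i\rangle - \langle [X_i,X_i], X\rangle = 2\langle [X_i,X], X_i\rangle.
\]
Summing over $i$ gives
\[
\diver X = \sum_{i=1}^{m} \langle \nabla_{X_i}X, X_i\rangle = -\sum_{i=1}^{m} \langle \ad(X)X_i, X_i\rangle = -\tr(\ad X).
\]
Applying the same identity to $-X$ in place of $X$ yields a smooth vector field of pointwise norm $1$ whose divergence equals $\tr(\ad X)$ everywhere, so Proposition \ref{vector field} gives $h(G) \geq \tr(\ad X)$. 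Taking the supremum over unit vectors $X \in \mathfrak{g}$ produces the asserted bound; note that the maximum is attained (the unit sphere in $\mathfrak{g}$ is compact) and automatically nonnegative, since $\tr(\ad(-X)) = -\tr(\ad X)$.

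There is no substantial obstacle in this argument. The only delicate point is getting the sign right in the divergence computation, which determines whether one should use $X$ or $-X$ as the input to Proposition \ref{vector field}; both the vanishing of the derivative terms in the Koszul formula and the identification with $-\tr(\ad X)$ are immediate from the fact that the frame is left-invariant and $\mathfrak{g}$-valued inner products are constant on $G$.
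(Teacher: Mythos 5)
Your proposal is correct and is essentially the paper's argument: the paper likewise notes that $\tr(\ad X) = -\diver X$ for a left-invariant unit vector field $X$ (the same Koszul-formula computation, with the derivative terms vanishing by left-invariance) and then applies Proposition \ref{vector field}, the sign being handled by replacing $X$ with $-X$ exactly as you do. The only blemish is a harmless sign typo in the term $\langle [X_i,X_i],X\rangle$, which vanishes anyway.
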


\begin{proof}
A straightforward calculation shows that $\tr (\ad X) = - \diver X$ for any $X \in \mathfrak{g}$, and the assertion is a consequence of Proposition \ref{vector field}. \qed
\end{proof}

\begin{proposition}\label{aux}
Let $G$ be a connected, amenable Lie group endowed with a left-invariant metric. Suppose that its radical $S$ is not abelian and denote by $H$ the mean curvature (in $G$) of the commutator subgroup $[S,S]$. Then
\[
\lambda_{0}(G) = \frac{1}{4} \| H \|^{2} = \frac{1}{4} \tr (\ad H).
\]
\end{proposition}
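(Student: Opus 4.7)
The plan is to apply Theorem \ref{Lie group thm} to the subgroup $N := [S,S]$. First I would verify that $N$ is a closed, connected, normal subgroup of $G$: closedness and connectedness come from standard facts about derived subgroups of connected solvable Lie groups, and normality in $G$ follows because $N$ is characteristic in $S$ (preserved by all automorphisms of $S$, in particular by conjugation by elements of $G$). Since $N$ is the derived subgroup of a solvable group, it is nilpotent — hence both unimodular and amenable, as recalled in the preliminaries. Thus the equivalence in Theorem \ref{Lie group thm} gives
\[
\lambda_{0}(G) = \lambda_{0}(G/N) - \tfrac{1}{4}\|H\|^{2} + \tfrac{1}{2}\tr(\ad H),
\]
where $H$ is the mean curvature (in $G$) of $N$. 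The proof then reduces to two claims: (i) $\lambda_{0}(G/N) = 0$; and (ii) $\|H\|^{2} = \tr(\ad H)$.

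For (i), I would show that $G/N$ is both amenable and unimodular and then apply Theorem \ref{unim and amen}. Amenability is immediate from Lemma \ref{short exact}. For unimodularity, note that the radical of $G/N$ is $S/N = S/[S,S]$, which is abelian (in particular, unimodular), while the quotient $(G/N)/(S/N)\cong G/S$ is compact because $G$ is amenable (by the characterization of amenable connected Lie groups recalled in Section \ref{preliminaries}). Since compact extensions of connected, unimodular Lie groups are unimodular, so is $G/N$.

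For (ii), I would appeal to the identity \eqref{mean curv} derived inside the proof of Theorem \ref{Lie group thm}, which in the present notation reads
\[
\|H\|^{2} = \tr(\ad H) + \diver p_{*}H,
\]
with $p\colon G\to G/N$ the projection. Because $N$ is normal, left-translations by elements of $G$ permute the fibers isometrically, so $H$ is left-invariant on $G$ and therefore $p_{*}H$ is a left-invariant vector field on $G/N$. For such a field one has $\diver(p_{*}H)=-\tr(\ad p_{*}H)$ (the identity used in the proof of Corollary \ref{cheeger estimate}), and this vanishes by the unimodularity of $G/N$ established in (i). Plugging $\diver p_{*}H = 0$ back gives $\tr(\ad H)=\|H\|^{2}$, and combined with step (i) this yields $\lambda_{0}(G)=\tfrac{1}{2}\tr(\ad H)-\tfrac{1}{4}\|H\|^{2}=\tfrac{1}{4}\|H\|^{2}=\tfrac{1}{4}\tr(\ad H)$.

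I do not anticipate a genuine obstacle: the argument is a careful assembly of Theorems \ref{Lie group thm} and \ref{unim and amen}, Lemma \ref{short exact}, the computation \eqref{mean curv}, and the structural fact that amenable connected Lie groups have compact semisimple quotient. The most delicate bookkeeping point is verifying that $G/N$ is unimodular, since this requires combining the abelianness of $S/[S,S]$ with the compactness of $G/S$ and the preservation of unimodularity under compact extensions; everything else is then essentially automatic.
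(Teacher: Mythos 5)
Your proposal is correct and follows essentially the same route as the paper: apply Theorem \ref{Lie group thm} to $N=[S,S]$ (nilpotent, hence unimodular and amenable), show that $G/N$ is a compact extension of the abelian group $S/N$ and therefore unimodular and amenable with $\lambda_{0}(G/N)=0$, and use the identity (\ref{mean curv}) together with the unimodularity of $G/N$ to conclude $\|H\|^{2}=\tr(\ad H)$. The one point where you are vaguer than the paper is the closedness of $[S,S]$: this is not a ``standard fact'' for arbitrary connected solvable Lie groups (the derived subgroup can fail to be closed when the group is not simply connected), and the paper instead passes to the universal cover $\tilde S$, where closedness and nilpotency of $[\tilde S,\tilde S]$ are standard, before descending to $S$.
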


\begin{proof}
Consider the universal covering $q \colon \tilde{S} \to S$. Since $\tilde{S}$ is simply connected and solvable, it is known that its commutator subgroup $[\tilde{S},\tilde{S}]$ is closed (as a subset of $\tilde{S}$) and nilpotent (cf. for instance \cite[Proposition 1.6]{Hoke} and the references therein). This yields that the commutator subgroup $N := [S,S] = q([\tilde{S},\tilde{S}])$ is a connected, closed (as a subset), normal and nilpotent subgroup of $G$. Since connected, nilpotent groups are unimodular and amenable, Theorem \ref{unim and amen} gives that
\[
\lambda_{0}(G) = \lambda_{0}(G/N) - \frac{1}{4} \| H \|^{2} + \frac{1}{2} \tr (\ad H).
\]
	
Bearing in mind that $G$ is a compact extension of $S$, it is evident that $G/N$ is a compact extension of the abelian group $S/N$. In particular, $G/N$ is unimodular and amenable, and hence, $\lambda_{0}(G/N) = 0$, from Theorem \ref{unim and amen}. Let $\{X_{i}\}_{i=1}^{m}$ be an orthonormal basis of $\mathfrak{g}$ with $\{X_{i}\}_{i=1}^{k}$ spanning the Lie algebra of $N$. Then formula (\ref{mean curv}) yields that
\[
\| H \|^{2} = \tr (\ad H) - \tr (\ad p_{*}H).
\]
We derive from Lemma \ref{unimodular} that $\tr (\ad p_{*}H) = 0$, $G/N$ being unimodular, as we wished.
\qed
\end{proof}\medskip

\noindent\emph{Proof of Corollary \ref{bottom amen}:} 
If $G$ is unimodular, then the statement follows from Lemma \ref{unimodular}, Theorem \ref{unim and amen} and the Cheeger inequality. Suppose now that $G$ is not unimodular and observe that $S$ is not abelian, since $G$ is a compact extension of $S$. It follows from Theorem \ref{unim and amen} that $\lambda_{0}(G) > 0$, and thus, the mean curvature (in $G$) $H$ of the commutator subgroup $N:=[S,S]$ of the radical $S$ of $G$ is non-zero, from Proposition \ref{aux}. In view of Corollary \ref{cheeger estimate}, Proposition \ref{aux} and the Cheeger inequality, we conclude that
\[
\frac{1}{4} h(G)^{2} \geq \frac{1}{4} (\tr (\ad H_{0}))^{2} = \frac{1}{4} \tr (\ad H) = \lambda_{0}(G) \geq \frac{1}{4} h(G)^{2},
\]
where $H_{0} := \| H \|^{-1} H$. \qed \medskip

According to \cite{MR638814}, if the Cheeger constant coincides with the exponential volume growth, then the equality holds in the Cheeger inequality. However, this fails in Corollary \ref{bottom amen}, since there exist unimodular and amenable Lie groups of exponential volume growth (cf. \cite[p. 1525]{MR2053361} and the references therein).

\begin{bibdiv}
	\begin{biblist}
		
\bib{MR577877}{article}{
	author={Baider, A.},
	title={Noncompact Riemannian manifolds with discrete spectra},
	journal={J. Differential Geometry},
	volume={14},
	date={1979},
	number={1},
	pages={41--57},
}

\bib{BMP1}{article}{
	author={Ballmann, W.},
	author={Matthiesen, H.},
	author={Polymerakis, P.},
	title={On the bottom of spectra under coverings},
	journal={Math. Z.},
	volume={288},
	date={2018},
	number={3-4},
	pages={1029--1036},
	issn={0025-5874},
}

\bib{BP}{article}{
	author={Ballmann, W.},
	author={Polymerakis, P.},
	title={Bottom of spectra and coverings},
	journal={Surv. Differ. Geom., to appear},
}

\bib{Bessa}{article}{
	author={Bessa, G. P.},
	author={Montenegro, J. F.},
	author={Piccione, P.},
	title={Riemannian submersions with discrete spectrum},
	journal={J. Geom. Anal.},
	volume={22},
	date={2012},
	number={2},
	pages={603--620},
}

\bib{Bordoni}{article}{
	author={Bordoni, M.},
	title={Spectral estimates for submersions with fibers of basic mean
		curvature},
	journal={An. Univ. Vest Timi\c{s}. Ser. Mat.-Inform.},
	volume={44},
	date={2006},
	number={1},
	pages={23--36},
}

\bib{MR2963622}{article}{
	author={Bordoni, M.},
	title={Spectra of submersions},
	conference={
		title={Contemporary geometry and related topics},
	},
	book={
		publisher={Univ. Belgrade Fac. Math., Belgrade},
	},
	date={2006},
}

\bib{MR3787357}{article}{
	author={Cavalcante, M. P.},
	author={Manfio, F.},
	title={On the fundamental tone of immersions and submersions},
	journal={Proc. Amer. Math. Soc.},
	volume={146},
	date={2018},
	number={7},
	pages={2963--2971},
	issn={0002-9939},
}

\bib{MR638814}{article}{
	author={Brooks, R.},
	title={A relation between growth and the spectrum of the Laplacian},
	journal={Math. Z.},
	volume={178},
	date={1981},
	number={4},
	pages={501--508},
	issn={0025-5874},
}
		
\bib{Brooks}{article}{
	author={Brooks, R.},
	title={The fundamental group and the spectrum of the Laplacian},
	journal={Comment. Math. Helv.},
	volume={56},
	date={1981},
	number={4},
	pages={581--598},
}

\bib{Buser}{article}{
	author={Buser, P.},
	title={A note on the isoperimetric constant},
	journal={Ann. Sci. \'{E}cole Norm. Sup. (4)},
	volume={15},
	date={1982},
	number={2},
	pages={213--230},
}

\bib{Cheeger}{article}{
	author={Cheeger, J.},
	title={A lower bound for the smallest eigenvalue of the Laplacian},
	conference={
		title={Problems in analysis},
		address={Papers dedicated to Salomon Bochner},
		date={1969},
	},
	book={
		publisher={Princeton Univ. Press, Princeton, N. J.},
	},
	date={1970},
	pages={195--199},
}

\bib{MR544241}{article}{
	author={Donnelly, H.},
	author={Li, P.},
	title={Pure point spectrum and negative curvature for noncompact
		manifolds},
	journal={Duke Math. J.},
	volume={46},
	date={1979},
	number={3},
	pages={497--503},
}

\bib{MR2110043}{book}{
	author={Falcitelli, M.},
	author={Ianus, S.},
	author={Pastore, A. M.},
	title={Riemannian submersions and related topics},
	publisher={World Scientific Publishing Co., Inc., River Edge, NJ},
	date={2004},
	pages={xiv+277},
	isbn={981-238-896-6},
}

\bib{MR0251549}{book}{
	author={Greenleaf, F. P.},
	title={Invariant means on topological groups and their applications},
	series={Van Nostrand Mathematical Studies, No. 16},
	publisher={Van Nostrand Reinhold Co., New York-Toronto, Ont.-London},
	date={1969},
	pages={ix+113},
}

\bib{Hoke}{article}{
	author={Hoke, H. F., III},
	title={Lie groups that are closed at infinity},
	journal={Trans. Amer. Math. Soc.},
	volume={313},
	date={1989},
	number={2},
	pages={721--735},
	issn={0002-9947},
}

\bib{MR330345}{article}{
	author={Kallman, R. R.},
	title={The existence of invariant measures on certain quotient spaces},
	journal={Advances in Math.},
	volume={11},
	date={1973},
	pages={387--391},
	issn={0001-8708},
}

\bib{MR454886}{article}{
	author={Milnor, J.},
	title={On fundamental groups of complete affinely flat manifolds},
	journal={Advances in Math.},
	volume={25},
	date={1977},
	number={2},
	pages={178--187},
	issn={0001-8708},
}

\bib{MR2053361}{article}{
	author={Peyerimhoff, N.},
	author={Samiou, E.},
	title={The Cheeger constant of simply connected, solvable Lie groups},
	journal={Proc. Amer. Math. Soc.},
	volume={132},
	date={2004},
	number={5},
	pages={1525--1529},
	issn={0002-9939},
}

\bib{Mine2}{article}{
	author={Polymerakis, P.},
	title={Coverings preserving the bottom of the spectrum},
	journal={MPI-Preprint 2019-3, https://arxiv.org/abs/1811.07844},
}

\bib{Mine}{article}{
	author={Polymerakis, P.},
	title={On the spectrum of differential operators under Riemannian coverings},
	journal={J. Geom. Anal.},
	date={2019},
	DOI={10.1007/s12220-019-00196-1}
}

\bib{Mine4}{article}{
	author={Polymerakis, P.},
	title={Spectral estimates and discreteness of spectra under Riemannian submersions},
	journal={Ann. Global Anal. Geom.},
	volume={57},
	date={2020},
	number={2},
	pages={349--363},
}

	\end{biblist}
\end{bibdiv}

\noindent Max Planck Institute for Mathematics \\
Vivatsgasse 7, 53111, Bonn \\
E-mail address: polymerp@mpim-bonn.mpg.de

\end{document}